\begin{document}

\allowdisplaybreaks
\newcommand{\eqdef}{\stackrel{d}{=}}
\newcommand{\eqlaw}{\stackrel{\footnotesize{\mathcal{L}}}{=}}
\newcommand{\iid}{\stackrel{\footnotesize{iid}}{\sim}}
\newcommand{\eqas}{\stackrel{a.s.}{=}}
\newcommand{\probconv}{\stackrel{P}{\longrightarrow}}
\newcommand{\asconv}{\stackrel{a.s.}{\longrightarrow}}
\newcommand{\weakconv}{\stackrel{\footnotesize{\mathcal{L}}}{\longrightarrow}}
\newcommand{\bX}{\mathbf{X}}
\newcommand{\R}{\mathbb{R}}
\newcommand{\Q}{\mathbb{Q}}
\newcommand{\Zd}{\mathbb{Z}^d}
\newcommand{\SaS}{S \alpha S}
\newcommand{\mC}{\mathcal{C}}
\newcommand{\mD}{\mathcal{D}}

\newtheorem{thm}{Theorem}[section]
\newtheorem{remark}[thm]{Remark}
\newtheorem{propn}[thm]{Proposition}
\newtheorem{qn}[thm]{Question}
\newtheorem{exc}{Exercise}
\newtheorem{problem}[thm]{Problem}
\newtheorem{prop}[thm]{Property}
\newtheorem{example}[thm]{Example}
\newtheorem{defn}[thm]{Definition}
\newtheorem{claim}[thm]{Claim}
\newtheorem{lemma}[thm]{Lemma}
\newtheorem{cor}[thm]{Corollary}
\newtheorem{ann}[thm]{Announcement}

\numberwithin{equation}{section}

\newcommand{\cd}{\ \stackrel{d}{\longrightarrow} \ }
\newcommand{\ed}{\ \stackrel{d}{=} \ }
\newcommand{\cp}{\ \stackrel{\bP}{\rightarrow} \ }
\newcommand{\cas}{\ \mathop{\longrightarrow}\limits^{\mbox{a.s.}}\ }
\newcommand{\eas}{\ \stackrel{\mbox{a.s.}}{=} \ }
\newcommand{\cl}{\ \mathop{\longrightarrow}\limits^{{\mathcal L}_1}\ }
\newcommand{\cll}{\ \mathop{\longrightarrow}\limits^{{\mathcal L}_2}\ }
\newcommand{\clas}{\ \mathop{\longrightarrow}\limits_{\mbox{a.s.}}
                                                     ^{{\mathcal L}_1}\ }
\newcommand{\cllas}{\ \mathop{\longrightarrow}\limits_{\mbox{a.s.}}
                                                     ^{{\mathcal L}_2}\ }

\newcommand{\XZd}{\{X_t\}_{t \in \mathbb{Z}^d}}
\newcommand{\XZ}{\{X_t\}_{t \in \mathbb{Z}}}
\newcommand{\XRd}{\{X_t\}_{t \in \mathbb{R}^d}}
\newcommand{\XR}{\{X_t\}_{t \in \mathbb{R}}}

\newcommand{\TT}{\mbox{${\mathcal T}$}}
\newcommand{\PP}{\mbox{${\mathcal P}$}}
\newcommand{\PPP}{\mbox{${\mathfrak P}$}}
\newcommand{\CC}{\mbox{${\mathcal C}$}}
\newcommand{\EE}{\mbox{${\mathcal E}$}}
\newcommand{\VV}{\mbox{${\mathcal V}$}}
\newcommand{\UU}{\mbox{${\mathcal U}$}}
\newcommand{\GG}{\mbox{${\mathcal G}$}}
\newcommand{\FF}{\mbox{${\mathcal F}$}}
\newcommand{\SSS}{\mbox{${\mathcal S}$}}
\newcommand{\NN}{\mbox{${\mathcal N}$}}
\newcommand{\MM}{\mbox{${\mathcal M}$}}
\newcommand{\HH}{\mbox{${\mathcal H}$}}
\newcommand{\AAA}{\mbox{${\mathcal A}$}}
\newcommand{\BB}{\mbox{${\mathcal B}$}}
\newcommand{\II}{\mbox{${\mathcal I}$}}
\newcommand{\LL}{\mbox{${\mathcal L}$}}

\newcommand{\eps}{\varepsilon}
\newcommand{\bone}{{\mathbf 1}}

\newcommand{\bA}{{\bf A}}
\newcommand{\bE}{{\bf E}}
\newcommand{\bP}{{\bf P}}
\newcommand{\bJ}{{\bf J}}
\newcommand{\bQ}{{\bf Q}}
\newcommand{\bI}{{\bf I}}
\newcommand{\bY}{{\bf Y}}
\newcommand{\bC}{{\bf C}}
\newcommand{\bT}{{\bf T}}
\newcommand{\ba}{{\bf a}}
\newcommand{\bb}{{\bf b}}
\newcommand{\bx}{{\bf x}}
\newcommand{\bt}{{\bf t}}
\newcommand{\bi}{{\bf i}}
\newcommand{\bj}{{\bf j}}
\newcommand{\by}{{\bf y}}
\newcommand{\bp}{{\bf p}}
\newcommand{\btheta}{{\mathbf{\theta}}}

\newcommand{\bzero}{\bf 0}

\newcommand{\phit}{\{\phi_t\}_{t \in \mathbb{Z}^d}}

\newcommand{\TTT}{\mbox{${\mathscr{T}}$}}
\newcommand{\EEE}{\mbox{${\mathscr{E}}$}}
\newcommand{\VVV}{\mbox{${\mathscr{V}}$}}

\newcommand{\MMM}{\mbox{${\mathfrak{M}}$}}

\newcommand{\VVVV}{\mbox{$\widetilde{\VVV}$}}

\newcommand{\Pbb}{\mbox{${\mathbb P}$}}

\newcommand{\var}{{\bf Var}}
\newcommand{\conc}{{\rm conc}\,}
\newcommand{\sfrac}[2]{{\textstyle\frac{#1}{#2}}}
\newcommand{\Ex}{\text{\bf E\/}}
\newcommand{\pr}{\mbox{{\bf Pr}}}
\newcommand{\med}{\mbox{median}}
\newcommand{\mudiag}{\mu^\nearrow}
\newcommand{\gen}{{\rm gen}}
\newcommand{\HT}{{\rm ht}}
\newcommand{\pcrit}{p_{\mbox{{\tiny crit}}}}
\newcommand{\Hbar}{{\overline H}}
\newcommand{\stleq}{\preccurlyeq}
\newcommand{\point}{\mathit{\Delta}}
\newcommand{\lleq}{\leq^*}
\newcommand{\Mopt}{\MM_{{\rm opt}}}
\newcommand{\muprod}{\mu \otimes \mu}
\newcommand{\Tprod}{T \otimes T}
\newcommand{\norm}{\parallel}

\newcommand{\Rbold}{{\mathbb{R}}}
\newcommand{\Tbold}{{\mathbb{T}}}
\newcommand{\Zbold}{{\mathbb{Z}}}
\newcommand{\Nbold}{{\mathbb{N}}}
\newcommand{\Ebold}{{\mathbb{E}}}
\newcommand{\Qbold}{{\mathbb{Q}}}

\newcommand{\bMM}{\stackrel{\rightarrow}{\MM}}
\newcommand{\ef}{\stackrel{\rightarrow}{e}}
\newcommand{\eb}{\stackrel{\leftarrow}{e}}
\newcommand{\dE}{\stackrel{\rightarrow}{\bE}}
\newcommand{\dEE}{\stackrel{\rightarrow}{\EE}}
\newcommand{\bMopt}{\stackrel{\rightarrow}{\Mopt}}

\title{Maxima of stable random fields, nonsingular actions and finitely generated abelian groups: A survey}
\dedicatory{Dedicated to Professor B.~V.~Rao}
\subjclass[2010]{Primary  60G52, 60G60; Secondary 37A40}
       \keywords{Stable process, random field, extreme value theory, nonsingular
group action, finitely generated abelian group.}
\author{Parthanil Roy}
       \address{Theoretical Statistics and Mathematics Unit, Indian Statistical Institute, 8th Mile, Mysore Road, RVCE Post, Bengaluru 560059, India.}
       \email{parthanil.roy@gmail.com}
       \thanks{Parthanil Roy was supported by Cumulative Professional Development Allowance from Ministry of Human Resource Development, Government of India and the project RARE-318984 (a Marie Curie FP7 IRSES Fellowship).}
        \maketitle

\begin{abstract}
This is a self-contained introduction to the applications of ergodic theory of nonsingular (also known as quasi-invariant) group actions and the structure theorem for finitely generated abelian groups on the extreme values of stationary symmetric stable random fields indexed by $\mathbb{Z}^d$. It is based on a mini course given in the \href{http://www.isid.ac.in/~lps16/PastWebsites/LPS8/index.html}{\emph{Eighth Lectures on Probability and Stochastic Processes}} (held in the \emph{Bangalore Centre of Indian Statistical Institute} during December 6-10, 2013) except that a few recent references have been added in the concluding part. This article is a survey of existing work and the proofs are therefore skipped or briefly outlined.
\end{abstract}

\section{Introduction}

The main goal of this paper is to study \emph{symmetric $\alpha$-stable} (S$\alpha$S) random fields with a view to formalizing the phrase \emph{long range dependence} (also known as \emph{long memory}), a property observed in many real life processes. This property typically refers to dependence between observations $X_t$ far separated in $t$. Historically, it was first
observed by a famous British hydrologist Harold Edwin Hurst, who
noticed an empirical phenomenon (now known as Hurst phenomenon; see
\cite{hurst:1951} and \cite{hurst:1955}) while looking at
measurements of the water flow in the Nile River.

A series of papers of Benoit Mandelbrot and his co-workers tried to
explain Hurst phenomenon using long range dependence; see \cite{mandelbrot:wallis:1968} and \cite{mandelbrot:wallis:1969c}.
From then on, processes having long memory have been used in many
different areas including economics, internet
modelling, climate studies, linguistics, DNA sequencing, etc. For a detailed discussion
on long range dependence, see \cite{samorodnitsky:2006} and the
references therein.

Most of the classical definitions
of long range dependence appearing in the literature are based on the
second order properties (e.g., covariance, spectral density,
variance of partial sum, etc.) of stochastic processes. For
example, one of the most widely accepted definitions of this notion for a stationary Gaussian process is that \emph{a stationary Gaussian process has long range dependence if its correlation function decays slowly enough to make it not
summable}.  In the heavy tails context, however, this definition
becomes ambiguous because correlation function may not even exist and even if it exists, it may not have enough
information about the dependence structure of the process.

In the context of stationary $S\alpha S$ processes ($0 < \alpha <
2$) indexed by $\mathbb{Z}$, instead of looking for a substitute for correlation function,
\cite{samorodnitsky:2004a} suggested a new approach through phase
transition phenomena as follows. Suppose that $(P_\theta,\,\theta
\in \Theta)$ is a family of laws of a stationary stochastic process,
where $\theta$ is a parameter of the process lying in a parameter
space $\Theta$. If $\Theta$ can be partitioned into $\Theta_0$ and
$\Theta_1$ in such a way that a significant number of functionals of this stochastic process change
dramatically as we pass from $\Theta_0$ to $\Theta_1$, then this
phase transition can be thought of as a change from short memory to
long memory. The aforementioned paper investigates the rate of growth of the partial maxima of the stationary $S\alpha S$ process indexed by $\mathbb{Z}$. A transition boundary is observed based on the
ergodic theoretic properties of the underlying nonsingular $\mathbb{Z}$-action obtained from the seminal work \cite{rosinski:1995}. In this article, we shall discuss the main results of these papers and their extensions (see \cite{rosinski:2000} and \cite{roy:samorodnitsky:2008}) to the S$\alpha$S random fields.

This survey paper is organised as follows. In Sections~\ref{sec:sas_distn} and~\ref{sec:sas_rm_int}, we follow \cite{samorodnitsky:taqqu:1994} and briefly discuss S$\alpha$S random variables and vectors, S$\alpha$S random measures, and the integrals with respect to them. Integral representations of S$\alpha$S random fields indexed by $\mathbb{Z}^d$ are studied in Section~\ref{sec:int_repn_sas_rf} and the stationary case is investigated in Section~\ref{sec_rosinski_repn}. We present a decomposition of stationary S$\alpha$S random fields into two independent components based on the Hopf decomposition of the underlying nonsingular $\mathbb{Z}^d$-actions in Section~\ref{sec:cons_diss_parts}. This decomposition is then connected, in Section~\ref{sec:maxima}, to the asymptotic behaviour of a partial maxima sequence of these fields. Section~\ref{sec:fg_ab_gp} deals with applications of the structure theorem for finitely generated abelian groups in this context and a brief discussion of open problems. Finally in Section~\ref{sec:other_works}, we carry out an extensive literature survey of related work.

\section{Symmetric $\alpha$-stable Distributions and Random Fields} \label{sec:sas_distn}

This section (and the next one) contains standard materials on stable distributions and random measures as given in \cite{samorodnitsky:taqqu:1994}. The only difference is that we specialise the results in the symmetric case.

\begin{defn} A random variable X is said to follow \underline{symmetric $\alpha$-stable} (S$\alpha$S) distribution ($\alpha \in (0, 2]$ is called the index of stability) with scale parameter $\sigma>0$ (denoted by $X \sim S\alpha S(\sigma)$) if its characteristic function is of the form
\[
E(e^{i\theta X})=e^{-\sigma^\alpha |\theta|^\alpha}, \; \theta \in \mathbb{R}.
\]
\end{defn}

It is not difficult to check that this is indeed a valid characteristic function; see, for example, \cite{feller:1971}.

\begin{prop}\label{prop_known_distns}(a) If $\alpha=1$, then $X \sim$ Cauchy distribution with density function $f_X(x)=\frac{\sigma}{\pi(x^2+\sigma^2)}$, $-\infty<x<\infty$.\\
(b) If $\alpha=2$, then $X \sim N(0, 2\sigma^2)$.
\end{prop}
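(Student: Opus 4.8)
The plan is to verify each distributional claim directly from the defining characteristic function $E(e^{i\theta X}) = e^{-\sigma^\alpha|\theta|^\alpha}$ by substituting the relevant value of $\alpha$ and then inverting the Fourier transform (or matching against a known characteristic function). This is a computational verification rather than a conceptual argument, so the structure is to handle the two cases $\alpha=1$ and $\alpha=2$ separately.

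For part (a), setting $\alpha=1$ gives the characteristic function $\varphi_X(\theta) = e^{-\sigma|\theta|}$. I would recognize this immediately as the characteristic function of a Cauchy distribution with scale $\sigma$, but to be self-contained I would perform the Fourier inversion
\[
f_X(x) = \frac{1}{2\pi}\int_{-\infty}^{\infty} e^{-i\theta x} e^{-\sigma|\theta|}\, d\theta.
\]
Splitting the integral into $\theta \geq 0$ and $\theta < 0$ reduces this to two elementary integrals of the form $\int_0^\infty e^{-(\sigma \pm ix)\theta}\, d\theta = (\sigma \pm ix)^{-1}$, and summing them yields $\frac{1}{2\pi}\bigl(\frac{1}{\sigma+ix} + \frac{1}{\sigma-ix}\bigr) = \frac{1}{2\pi}\cdot\frac{2\sigma}{\sigma^2+x^2} = \frac{\sigma}{\pi(x^2+\sigma^2)}$, which is exactly the claimed density.

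For part (b), setting $\alpha=2$ gives $\varphi_X(\theta) = e^{-\sigma^2\theta^2}$. I would compare this with the characteristic function of a $N(0,\tau^2)$ random variable, which is $e^{-\tau^2\theta^2/2}$. Matching exponents forces $\tau^2/2 = \sigma^2$, i.e.\ $\tau^2 = 2\sigma^2$, so $X \sim N(0, 2\sigma^2)$. Since the characteristic function determines the distribution uniquely (by the uniqueness theorem for Fourier transforms of probability measures), this identification is complete.

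I do not anticipate a genuine obstacle here, as both computations are routine. The only point requiring mild care is the Fourier inversion in part (a): one must justify using the inversion formula, which is valid because $e^{-\sigma|\theta|}$ is integrable on $\mathbb{R}$, and then handle the absolute value by splitting the domain. For part (b), the one subtlety worth stating explicitly is the factor-of-two convention in the normal characteristic function, which is precisely what produces the variance $2\sigma^2$ rather than $\sigma^2$; flagging this convention is the main thing that prevents an off-by-a-constant error.
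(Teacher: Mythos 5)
Your verification is correct and complete: the Fourier inversion for $\alpha=1$ and the matching of characteristic functions for $\alpha=2$ (with the uniqueness theorem invoked to close the argument) are exactly the standard way to establish both identifications, and your attention to the factor-of-two convention in the normal characteristic function is what guards against the common error of writing $N(0,\sigma^2)$ instead of $N(0,2\sigma^2)$. The paper itself offers no proof of this proposition, simply recording it as a known fact with references to Feller and others, so there is nothing to compare against; your argument fills that gap correctly.
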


These are the only two cases in which the density functions are known in closed form. For the other values of $\alpha$, $X$ is supported on $\mathbb{R}$ with a continuous density function that can be written in a series. See, for example, \cite{ibragimov:linnik:1971}, \cite{feller:1971} and \cite{zolotarev:1986}. 

It is worth mentioning that $X$ behaves very differently when $\alpha =2$ in comparison to the case $0 < \alpha < 1$. For example, in the latter situation, $X$ has infinite second moment (see Corollary~\ref{cor_moment} below) while in the former case it is Gaussian and hence has all moments finite. We shall assume from now on that $0 <\alpha < 2$.

\begin{prop} If $X_i \sim S\alpha S(\sigma_i)$, $i=1,2$ and $X_1 , X_2$ are independent, then $a_1 X_1+a_2 X_2\sim S\alpha S \big((|a_1|^\alpha \sigma_1^\alpha+|a_2|^\alpha \sigma_2^\alpha)^{1/\alpha}\big)$. In particular, $X \eqlaw -X$.
\end{prop}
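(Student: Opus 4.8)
The plan is to work directly with characteristic functions, since the entire $S\alpha S$ family has been \emph{defined} through them and no closed-form densities are available for general $\alpha$. I would first compute the characteristic function of the linear combination $Y := a_1 X_1 + a_2 X_2$ and then recognise it as having exactly the $S\alpha S$ form with the claimed scale parameter. Concretely, for any $\theta \in \mathbb{R}$, independence of $X_1$ and $X_2$ gives
\[
E\big(e^{i\theta Y}\big) = E\big(e^{i(\theta a_1) X_1}\big)\, E\big(e^{i(\theta a_2) X_2}\big).
\]
Here the key structural fact being used is that the characteristic function of an independent sum factorises; this is the step that makes the stable family close under linear combinations.

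Next I would substitute the defining form $E(e^{is X_i}) = e^{-\sigma_i^\alpha |s|^\alpha}$ into each factor, evaluating the first at $s = \theta a_1$ and the second at $s = \theta a_2$. This yields
\[
E\big(e^{i\theta Y}\big) = e^{-\sigma_1^\alpha |\theta a_1|^\alpha}\, e^{-\sigma_2^\alpha |\theta a_2|^\alpha}
= \exp\!\big(-\big(|a_1|^\alpha \sigma_1^\alpha + |a_2|^\alpha \sigma_2^\alpha\big)\,|\theta|^\alpha\big),
\]
where I have used $|\theta a_i|^\alpha = |a_i|^\alpha |\theta|^\alpha$ and combined the exponents. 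Writing $\sigma := \big(|a_1|^\alpha \sigma_1^\alpha + |a_2|^\alpha \sigma_2^\alpha\big)^{1/\alpha}$, the right-hand side is precisely $e^{-\sigma^\alpha |\theta|^\alpha}$, so by the definition of the $S\alpha S$ distribution and the uniqueness theorem for characteristic functions, $Y \sim S\alpha S(\sigma)$, which is the desired conclusion.

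For the final assertion $X \eqlaw -X$, I would apply the computation above (or argue directly) to the single random variable $X \sim S\alpha S(\sigma)$ with coefficient $a = -1$: the characteristic function of $-X$ at $\theta$ is $E(e^{-i\theta X}) = e^{-\sigma^\alpha |-\theta|^\alpha} = e^{-\sigma^\alpha |\theta|^\alpha}$, which coincides with the characteristic function of $X$. Since equality of characteristic functions forces equality in law, $X$ and $-X$ have the same distribution, justifying the term \emph{symmetric}.

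There is essentially no serious obstacle here; the proof is a short manipulation of characteristic functions. The only points requiring a moment's care are the invocation of the uniqueness theorem (a distribution is determined by its characteristic function, so matching the functional form genuinely identifies the law as $S\alpha S(\sigma)$) and the elementary absolute-value identity $|a_i \theta|^\alpha = |a_i|^\alpha |\theta|^\alpha$, which is what produces the $|a_i|^\alpha$ weighting inside the scale parameter. An induction on the number of summands would extend the statement to arbitrary finite independent linear combinations, but the two-variable case stated is the substantive content.
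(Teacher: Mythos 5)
Your proof is correct and is exactly the standard characteristic-function computation that the paper implicitly relies on (the survey states this proposition without proof, as it follows immediately from the definition of $S\alpha S$ via its characteristic function together with independence and uniqueness of characteristic functions). Nothing is missing.
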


\begin{prop} If $X_1, X_2, \ldots, X_n \iid S\alpha S(\sigma)$, then $\sum_{i=1}^n X_i \eqlaw n^{1/\alpha} X_1$.
\end{prop}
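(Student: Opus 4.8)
The plan is to reduce the claim to the uniqueness theorem for characteristic functions, which is the only nontrivial analytic input. Since the defining relation $E(e^{i\theta X}) = e^{-\sigma^\alpha|\theta|^\alpha}$ characterises the $S\alpha S(\sigma)$ law, it suffices to show that $\sum_{i=1}^n X_i$ and $n^{1/\alpha}X_1$ share the same characteristic function.

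First I would compute the characteristic function of the left-hand side. Independence of $X_1,\dots,X_n$ lets the joint expectation factor, so that
\[
E\!\left(e^{i\theta \sum_{i=1}^n X_i}\right) = \prod_{i=1}^n E\!\left(e^{i\theta X_i}\right) = \left(e^{-\sigma^\alpha|\theta|^\alpha}\right)^n = e^{-n\sigma^\alpha|\theta|^\alpha}.
\]
Next I would compute the characteristic function of the right-hand side by rescaling the frequency variable:
\[
E\!\left(e^{i\theta\, n^{1/\alpha}X_1}\right) = e^{-\sigma^\alpha\left|n^{1/\alpha}\theta\right|^\alpha} = e^{-n\sigma^\alpha|\theta|^\alpha},
\]
where the last step uses $\left|n^{1/\alpha}\right|^\alpha = n$ as $n>0$. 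The two expressions agree for every $\theta\in\mathbb{R}$, so uniqueness of characteristic functions yields $\sum_{i=1}^n X_i \eqlaw n^{1/\alpha}X_1$.

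Alternatively, one may bypass the direct computation and invoke the preceding Proposition by induction on $n$: applying the scaling-and-addition rule repeatedly with all coefficients equal to one and all scales equal to $\sigma$ shows $\sum_{i=1}^n X_i \sim S\alpha S\big((n\sigma^\alpha)^{1/\alpha}\big) = S\alpha S(n^{1/\alpha}\sigma)$, while the same Proposition (taking a single nonzero coefficient $a_1 = n^{1/\alpha}$ and $a_2 = 0$) gives $n^{1/\alpha}X_1 \sim S\alpha S(n^{1/\alpha}\sigma)$; equality of scale parameters then forces equality in law. I do not anticipate any genuine obstacle: the entire content is encoded in the exponential form of the characteristic function, and the only point worth flagging is that the exponent scales additively under independence precisely because of the multiplicativity of $e^{-\sigma^\alpha|\theta|^\alpha}$, which matches the $\alpha$-homogeneity $\left|n^{1/\alpha}\theta\right|^\alpha = n|\theta|^\alpha$ produced by the $1/\alpha$ normalisation.
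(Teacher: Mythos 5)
Your proof is correct: the characteristic-function computation (and the alternative deduction from the preceding Proposition) is exactly the standard argument, and the paper itself omits any proof of this Property, treating it as routine material from Samorodnitsky and Taqqu. Nothing further is needed.
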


\begin{prop} \label{prop_tail_behaviour} If $X \sim S\alpha S(\sigma)$ with $\alpha \in (0,2)$, then $P(|X|>\lambda) \sim \sigma^\alpha C_\alpha \lambda^{-\alpha}$ as $\lambda \to \infty$, where \begin{equation}
C_\alpha = {\left(\int_0^\infty x^{-\alpha} \sin{x}\,dx
\right)}^{-1}
 =\left\{
  \begin{array}{ll}
  \frac{1-\alpha}{\Gamma(2-\alpha) \cos{(\pi
\alpha/2)}}&\mbox{\textit{\small{if }}}\alpha \neq 1,\\
  \frac{2}{\pi}
&\mbox{\textit{\small{if }}}\alpha = 1.
  \end{array}
 \right. \label{defn_C_alpha}
\end{equation}
\end{prop}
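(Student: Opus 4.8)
The plan is to reduce to the standard scale $\sigma=1$ and then read off the tail constant from the behaviour of the characteristic function near $0$. Since $E\big(e^{i\theta X/\sigma}\big)=e^{-|\theta|^\alpha}$, the variable $Y:=X/\sigma$ is $S\alpha S(1)$, and $P(|X|>\lambda)=P(|Y|>\lambda/\sigma)$. Hence it suffices to prove $P(|Y|>\lambda)\sim C_\alpha\lambda^{-\alpha}$ for the standardised variable; replacing $\lambda$ by $\lambda/\sigma$ then reinstates the factor $\sigma^\alpha$.

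First I would record an exact identity linking the tail $\overline{G}(\lambda):=P(|Y|>\lambda)$ to $\phi(\theta):=e^{-|\theta|^\alpha}$. Using the symmetry of $Y$ to write $1-\phi(\theta)=\int_0^\infty\big(1-\cos(\theta x)\big)\,dG(x)$, where $G$ is the law of $|Y|$, one integration by parts (the boundary terms vanish because $\overline{G}(x)=O(x^{-\alpha})$) gives
\[ 1-e^{-|\theta|^\alpha}=\theta\int_0^\infty \sin(\theta x)\,\overline{G}(x)\,dx=\int_0^\infty \sin(u)\,\overline{G}(u/\theta)\,du, \]
after the substitution $u=\theta x$. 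The left-hand side is $\sim|\theta|^\alpha$ as $\theta\to0$.

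Next I would identify the constant. Granting for the moment that $\overline{G}(\lambda)\sim c_\alpha\lambda^{-\alpha}$, I substitute $\overline{G}(u/\theta)\sim c_\alpha\theta^\alpha u^{-\alpha}$ into the displayed integral and let $\theta\to0$ to obtain
\[ 1=\lim_{\theta\to0}\frac{1-e^{-|\theta|^\alpha}}{|\theta|^\alpha}=\lim_{\theta\to0}|\theta|^{-\alpha}\int_0^\infty \sin(u)\,\overline{G}(u/\theta)\,du=c_\alpha\int_0^\infty u^{-\alpha}\sin u\,du, \]
which forces $c_\alpha=\big(\int_0^\infty u^{-\alpha}\sin u\,du\big)^{-1}=C_\alpha$. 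The integral converges precisely for $0<\alpha<2$ (near $0$ the integrand is $\sim u^{1-\alpha}$, and near $\infty$ it is conditionally convergent), and a standard evaluation gives $\int_0^\infty u^{-\alpha}\sin u\,du=\Gamma(1-\alpha)\cos(\pi\alpha/2)$, i.e.\ $C_\alpha=\big(\Gamma(1-\alpha)\cos(\pi\alpha/2)\big)^{-1}$; using $\Gamma(2-\alpha)=(1-\alpha)\Gamma(1-\alpha)$ this matches the closed form in the statement, while the value $C_1=2/\pi$ follows by continuity from $\int_0^\infty u^{-1}\sin u\,du=\pi/2$.

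The genuine obstacle is the Tauberian step, namely establishing that $\overline{G}$ really is regularly varying with index $-\alpha$ (and not merely trapped between two such multiples); this is what the identification above tacitly assumed. I would supply it by invoking the Tauberian correspondence between the behaviour of $1-\phi$ at the origin and of $\overline{G}$ at infinity, whose Tauberian condition is the monotonicity of $\overline{G}$. A secondary technicality is that $\int_0^\infty u^{-\alpha}\sin u\,du$ is only conditionally convergent, so the passage to the limit under the integral sign needs justification: I would split the range at some $\delta>0$, use the uniform convergence theorem for the regularly varying $\overline{G}$ on $[0,\delta]$, and perform a second integration by parts on $[\delta,\infty)$ to gain absolute convergence there.
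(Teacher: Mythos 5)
Your argument is essentially correct, but it takes a genuinely different route from the paper. The paper handles $\alpha=1$ by the explicit Cauchy density, and for $0<\alpha<1$ it passes through the \emph{Laplace transform of $|X|$}: it first computes $E(e^{-\gamma|X|})=\exp\bigl(-\sigma^\alpha\gamma^\alpha/\cos(\pi\alpha/2)\bigr)$ (via the positive-stable subordination identities in Samorodnitsky--Taqqu), then uses $\int_0^\infty e^{-\gamma\lambda}P(|X|>\lambda)\,d\lambda=\gamma^{-1}\bigl(1-E e^{-\gamma|X|}\bigr)\sim \tfrac{\sigma^\alpha}{\cos(\pi\alpha/2)}\gamma^{\alpha-1}$ and Karamata's Tauberian theorem for Laplace transforms (Feller XIII.5.4), deferring $1<\alpha<2$ to the references. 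You instead work directly with the characteristic function and a Fourier-type Tauberian theorem (Pitman's theorem relating $1-\mathrm{Re}\,\phi$ near $0$ to the tail, with monotonicity of $\overline{G}$ as the Tauberian condition), which treats all $\alpha\in(0,2)$ uniformly and produces the constant $C_\alpha=\bigl(\int_0^\infty u^{-\alpha}\sin u\,du\bigr)^{-1}=\bigl(\Gamma(1-\alpha)\cos(\pi\alpha/2)\bigr)^{-1}$ in exactly the form of \eqref{defn_C_alpha}. What you gain is uniformity in $\alpha$ and a derivation straight from the definition; what the paper's route buys is a cleaner Tauberian step, since the Laplace kernel is positive and monotone and Karamata's theorem is entirely standard, whereas the sine-kernel version you need is more delicate (one must exploit $1-\cos\ge 0$, and the constant-identification integral is only conditionally convergent, as you note). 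Two small points to tighten: your justification of the vanishing boundary term by ``$\overline{G}(x)=O(x^{-\alpha})$'' is circular as written, but unnecessary --- boundedness of $1-\cos(\theta x)$ together with $\overline{G}(x)\to 0$ already suffices, and Dirichlet's test gives convergence of the improper integral since $\overline{G}$ is monotone; and you should be explicit that the Tauberian theorem you invoke is the Pitman/Bingham--Goldie--Teugels Fourier version rather than Karamata's Laplace version, since the latter does not apply directly to an oscillatory kernel.
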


\begin{proof}[Sketch of Proof] For $\alpha=1$, this is trivial to prove. For $\alpha \in (0,1)$, we divide the proof into several steps as described below.

\textbf{Step 1.} The Laplace transform of $|X|$ is $E(e^{-\gamma |X|})=\exp{\left(-\frac{\sigma^\alpha}{\cos{(\pi \alpha/2)}}\gamma^\alpha\right)}$, $\gamma \geq 0$. (Use Proposition 1.2.12 and Property 1.2.13 of \cite{samorodnitsky:taqqu:1994}.) \smallskip

\textbf{Step 2.} Using integration by parts,
\[
\int_0^\infty e^{-\gamma \lambda} P(|X|>\lambda) d\lambda = \frac{1-E(e^{-\gamma |X|})}{\gamma} \sim \frac{\sigma^\alpha}{\cos{(\pi \alpha/2)}} \gamma^{\alpha-1}
\]
as $\gamma \to 0$.  \smallskip

\textbf{Step 3.} Step 2 and Theorem XIII.5.4 of \cite{feller:1971} imply that $P(|X|>\lambda) \sim \frac{\sigma^\alpha}{\cos{(\pi \alpha/2)}\Gamma(1-\alpha)} \lambda^{-\alpha}=\sigma^\alpha C_\alpha \lambda^{-\alpha}$ since $0<\alpha<1$.

See \cite{feller:1971} and \cite{samorodnitsky:taqqu:1994} for the details in the $0<\alpha<1$ case and the proof in the $1<\alpha<2$ case.
\end{proof}

\begin{cor} \label{cor_moment} For $0 < \alpha < 2$, $E|X|^p <\infty$ if $0 < p < \alpha$ and $E|X|^p =\infty$ if $p \geq \alpha$.
\end{cor}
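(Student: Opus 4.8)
The plan is to reduce everything to the tail estimate established in Proposition~\ref{prop_tail_behaviour} via the standard ``layer-cake'' representation of absolute moments. First I would write, for $p > 0$,
\[
E|X|^p = \int_0^\infty p\lambda^{p-1} P(|X| > \lambda)\, d\lambda,
\]
which holds for any nonnegative random variable by Tonelli's theorem (integrate $P(|X|^p > s) = P(|X| > s^{1/p})$ over $s \in (0,\infty)$ and substitute $s = \lambda^p$). This converts the moment question into a question about the convergence of a single improper integral, in which the only delicate behaviour occurs as $\lambda \to \infty$.

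Next I would split the integral at $\lambda = 1$. On $[0,1]$ the integrand is dominated by $p\lambda^{p-1}$ (since $P(|X| > \lambda) \leq 1$), and $\int_0^1 p\lambda^{p-1}\, d\lambda = 1 < \infty$, so this piece is always finite and irrelevant to the dichotomy. The entire question therefore rests on the tail integral $\int_1^\infty p\lambda^{p-1} P(|X| > \lambda)\, d\lambda$.

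For the tail piece I would invoke Proposition~\ref{prop_tail_behaviour}: since $P(|X| > \lambda) \sim \sigma^\alpha C_\alpha \lambda^{-\alpha}$, for any $\eps > 0$ there is $\lambda_0 \geq 1$ such that $(\sigma^\alpha C_\alpha - \eps)\lambda^{-\alpha} \leq P(|X| > \lambda) \leq (\sigma^\alpha C_\alpha + \eps)\lambda^{-\alpha}$ for all $\lambda \geq \lambda_0$. Sandwiching the integrand between constant multiples of $\lambda^{p-1-\alpha}$ reduces the convergence of $\int_{\lambda_0}^\infty p\lambda^{p-1} P(|X| > \lambda)\, d\lambda$ to that of the elementary power integral $\int_{\lambda_0}^\infty \lambda^{p-1-\alpha}\, d\lambda$. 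The latter converges precisely when $p - 1 - \alpha < -1$, i.e. when $p < \alpha$, yielding $E|X|^p < \infty$; it diverges when $p \geq \alpha$, and the lower bound (with $\eps$ chosen small enough that $\sigma^\alpha C_\alpha - \eps > 0$) then forces $E|X|^p = \infty$.

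I do not expect any serious obstacle, since this is a direct corollary of the tail asymptotics. The only point requiring a little care is the boundary case $p = \alpha$, where the comparison integral is $\int^\infty \lambda^{-1}\, d\lambda$ and diverges only logarithmically; here one must genuinely use the lower tail bound (not merely the asymptotic equivalence) to conclude that the moment is infinite exactly at $p = \alpha$.
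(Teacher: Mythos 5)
Your proof is correct and is exactly the argument the paper intends: the corollary is stated without proof as an immediate consequence of the tail asymptotics in Proposition~\ref{prop_tail_behaviour}, and the layer-cake representation combined with comparison to $\int^\infty \lambda^{p-1-\alpha}\,d\lambda$ is the standard way to make that deduction. Your care with the boundary case $p=\alpha$, where only the lower tail bound yields divergence, is exactly the right point to flag.
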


The following series representation of an S$\alpha$S random variable will be extremely useful for us later in this survey.

\begin{thm} \label{thm_series_repn_SalphaS} Let $\{\epsilon_i\}_{i \geq 1}$, $\{\Gamma_i\}_{i \geq 1}$, $\{W_i\}_{i \geq 1}$ be three independent sequences of random variables, where $\epsilon_1, \epsilon_2, \ldots \i.i.d. \pm 1$ with probability $1/2$ each, $\Gamma_1 < \Gamma_2 < \cdots $ are the arrival times of a homogeneous Poisson process with unit arrival rate, and $W_1, W_2, \ldots$ are i.i.d. satisfying $E|W_1|^\alpha<\infty$. Then the series
\begin{equation}
\sum_{i=1}^\infty \epsilon_i \Gamma_i^{-1/\alpha} W_i \label{series_repn_SalphaS}
\end{equation}
converges almost surely to a random variable $X \sim S\alpha S\left((C_\alpha^{-1}E|W_1|^\alpha)^{1/\alpha}\right)$.
\end{thm}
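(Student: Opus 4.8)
The plan is to decouple the two assertions—almost sure convergence of the series, and the identification of its limiting law—and to exploit throughout that $\{\Gamma_i\}_{i\ge 1}$ are the atoms of a homogeneous Poisson process on $(0,\infty)$ with Lebesgue mean measure, independent of the marks $(\epsilon_i,W_i)$. First I would fix notation: write $S_n=\sum_{i=1}^n \epsilon_i\Gamma_i^{-1/\alpha}W_i$, denote by $S$ the candidate limit, and set $\sigma^\alpha=C_\alpha^{-1}E|W_1|^\alpha$.

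For almost sure convergence I would condition on the whole sequence $(\Gamma_i,W_i)_{i\ge1}$. Given these, $S=\sum_i \epsilon_i c_i$ is a random-sign (Rademacher) series with deterministic coefficients $c_i=\Gamma_i^{-1/\alpha}W_i$, and by the Kolmogorov three-series theorem such a series converges almost surely in the $\epsilon$-randomness if and only if $\sum_i c_i^2<\infty$. Hence it suffices to prove $\sum_i \Gamma_i^{-2/\alpha}W_i^2<\infty$ almost surely. Using the strong law $\Gamma_i/i\to1$, I would bound $\Gamma_i\ge i/2$ for all large $i$ and reduce to showing $\sum_i i^{-2/\alpha}W_i^2<\infty$ a.s.\ where now the summands are nonnegative, independent, and independent of the $\Gamma_i$. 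For such a sum the three-series theorem reduces matters to $\sum_i E[\min(i^{-2/\alpha}W_i^2,1)]<\infty$. Writing $\beta=\alpha/2\in(0,1)$ so that $i^{-2/\alpha}=i^{-1/\beta}$, a direct estimate splitting according to whether $i\le v^{\beta}$ shows $\sum_i\min(i^{-1/\beta}v,1)$ is of order $v^{\beta}$; taking $v=W_1^2$ gives $\sum_i E[\min(i^{-2/\alpha}W_i^2,1)]\le \mathrm{const}\cdot E|W_1|^\alpha<\infty$.

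For the law I would avoid the joint distribution of $\Gamma_1,\dots,\Gamma_n$ and truncate instead by the \emph{value} of $\Gamma_i$: set $S_M=\sum_{i:\,\Gamma_i\le M}\epsilon_i\Gamma_i^{-1/\alpha}W_i$, a genuine Poisson integral of the mark function $h(\gamma,\epsilon,w)=\epsilon\gamma^{-1/\alpha}w$ over the finite-mass region $(0,M]$, so that only finitely many atoms contribute. Campbell's formula gives $E[e^{i\theta S_M}]=\exp\big(\int_0^M\!\!\int(e^{i\theta h}-1)\,Q(d\epsilon,dw)\,d\gamma\big)$, and integrating first over the symmetric sign $\epsilon$ collapses the inner mark-integral to $E_W[\cos(\theta\gamma^{-1/\alpha}W)-1]$. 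Since $\Gamma_i$ increases, $S_M$ is the partial sum up to the random index $\#\{i:\Gamma_i\le M\}\to\infty$, so $S_M\to S$ almost surely (hence in law) by the previous step, and letting $M\to\infty$ yields $E[e^{i\theta S}]=\exp\big(\int_0^\infty E_W[\cos(\theta\gamma^{-1/\alpha}W)-1]\,d\gamma\big)$. The interchange is legitimate because the double integral converges absolutely: for $\theta>0$ the substitution $y=\theta\gamma^{-1/\alpha}|W|$ turns the $\gamma$-integral at fixed $W$ into $\alpha(\theta|W|)^\alpha\int_0^\infty(\cos y-1)y^{-\alpha-1}\,dy$, with $\int_0^\infty|\cos y-1|y^{-\alpha-1}\,dy<\infty$ exactly because $0<\alpha<2$, while $E|W_1|^\alpha<\infty$ controls the marks. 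Finally an integration by parts rewrites $\int_0^\infty(1-\cos y)y^{-\alpha-1}\,dy=\alpha^{-1}\int_0^\infty y^{-\alpha}\sin y\,dy=(\alpha C_\alpha)^{-1}$ (the boundary terms vanishing since $0<\alpha<2$), so that $E[e^{i\theta S}]=\exp(-C_\alpha^{-1}E|W_1|^\alpha\,\theta^\alpha)=e^{-\sigma^\alpha|\theta|^\alpha}$ for $\theta>0$; symmetry extends this to all $\theta$ and identifies $S\sim S\alpha S(\sigma)$.

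The main obstacle, I expect, is the almost sure convergence under the single hypothesis $E|W_1|^\alpha<\infty$: since $W_1^2$ need not be integrable one cannot take expectations term by term, and the naive bound $\min(x,1)\le x^{\alpha/2}$ diverges after summation, so the convergence genuinely rests on the sharper estimate of the truncated means $\sum_i E[\min(i^{-2/\alpha}W_i^2,1)]$. A secondary technical point is the clean justification of Campbell's formula together with the passage $M\to\infty$, which is precisely what the absolute integrability afforded by $\alpha<2$ (in the symmetrized cosine form) is there to guarantee.
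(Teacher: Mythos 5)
Your proposal is correct, and for the identification of the limit law it takes a genuinely different route from the paper. On almost sure convergence both arguments rest on the three-series theorem (the paper simply cites pp.~24--25 of Samorodnitsky--Taqqu; you supply the details, and your key estimate $\sum_i E\bigl[\min(i^{-2/\alpha}W_i^2,1)\bigr]\leq c\,E|W_1|^{\alpha}<\infty$ together with $\Gamma_i/i\to 1$ is exactly the right way to get around the possible non-integrability of $W_1^2$). For the law, however, the paper uses the order-statistics identity $(\Gamma_1/\Gamma_{n+1},\ldots,\Gamma_n/\Gamma_{n+1})\eqlaw(U_{(1)},\ldots,U_{(n)})$, an exchangeability argument, the tail asymptotics $P(|\epsilon_1 U_1^{-1/\alpha}W_1|>\lambda)\sim E|W_1|^{\alpha}\lambda^{-\alpha}$ and the classical domain-of-attraction criterion for stable laws, so the stable limit is obtained as a distributional limit of normalized i.i.d.\ sums. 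You instead read \eqref{series_repn_SalphaS} as a Poisson integral of the mark function $\epsilon\gamma^{-1/\alpha}w$ and compute the characteristic function exactly via the marking theorem and Campbell's formula, letting the truncation level $M\to\infty$. Your route requires the (standard) marked-Poisson-process machinery but yields the characteristic function in closed form rather than as a limit, and it makes the constant $C_\alpha$ emerge transparently from the identity $\int_0^\infty(1-\cos y)\,y^{-\alpha-1}\,dy=(\alpha C_\alpha)^{-1}$; the paper's route stays within elementary limit theory at the cost of invoking the domain-of-attraction theorem as a black box. Both proofs are complete and correct; the only point you should state explicitly is that $S_M\to S$ almost surely because $S_M=S_{N(M)}$ with $N(M)=\#\{i:\Gamma_i\leq M\}\to\infty$ and the full series converges, which you do note.
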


\begin{remark} \label{remark_series_repn_SalphaS} \textnormal{It can be shown that $P\big(|\epsilon_1\Gamma_1^{-1/\alpha} W_1|>\lambda\big) \sim E|W_1|^\alpha \lambda^{-\alpha}$ as $\lambda \to \infty$ whereas $P\big(\sum_{i=2}^\infty\epsilon_i \Gamma_i^{-1/\alpha} W_i>\lambda\big)=o(\lambda^{-\alpha})$ as $\lambda \to \infty$; see pg 26-28 of \cite{samorodnitsky:taqqu:1994}. According to the discussions in pg 26 of this reference, the first term $\epsilon_1\Gamma_1^{-1/\alpha} W_1$ is the dominating term (of \eqref{series_repn_SalphaS}) that gives the precise asymptotics of its tail while the rest of the terms provide the ``necessary corrections for the whole sum to have an $\alpha$-stable distribution''. This is regarded as the \emph{one large jump heuristic} for an S$\alpha$S random variable.}
\end{remark}

\begin{proof}[Sketch of Proof of Theorem~\ref{thm_series_repn_SalphaS}] \textbf{Step 1.} Three series theorem (\cite{feller:1971}, Theorem IX.9.3) can be used to show that the series \eqref{series_repn_SalphaS} converges almost surely as $n \to \infty$. This is not completely straightforward but somewhat routine; see pg 24-25 of \cite{samorodnitsky:taqqu:1994}. \smallskip

\textbf{Step 2.} Use the following ``cool trick'' from elementary probability theory to identify the distribution of the (almost surely) convergent series \eqref{series_repn_SalphaS}. Take a sequence of $U_1, U_2, \ldots \iid Unif(0,1)$ independent of $\{\epsilon_i\}_{i \geq 1}$ and $\{W_i\}_{i \geq 1}$. Recall that for each $n$,
\[
\left(\frac{\Gamma_1}{\Gamma_{n+1}}, \frac{\Gamma_2}{\Gamma_{n+1}}, \ldots, \frac{\Gamma_n}{\Gamma_{n+1}} \right) \eqlaw (U_{(1)}, U_{(2)}, \ldots, U_{(n)}),
\]
where $U_{(1)} < U_{(2)} <\cdots < U_{(n)}$ are the order-statistics obtained from the random sample $(U_1, U_2, \ldots, U_n)$. Using this equality of distribution and an exchangeability argument,
\begin{equation}
\left(\frac{\Gamma_{n+1}}{n}\right)^{1/\alpha}\sum_{i=1}^n \epsilon_i \Gamma_i^{-1/\alpha} W_i \eqlaw \frac{1}{n^{1/\alpha}}\sum_{i=1}^n \epsilon_i U_{(i)}^{-1/\alpha} W_i \eqlaw \frac{1}{n^{1/\alpha}}\sum_{i=1}^n \epsilon_i U_{i}^{-1/\alpha} W_i \label{equality_of_distn}.
\end{equation}
It is not difficult to verify that $\{\epsilon_i U_i^{-1/\alpha} W_i\}_{i \geq 1}$ is a sequence of i.i.d. symmetric random variables satisfying $P(|\epsilon_1 U_1^{-1/\alpha} W_1|>\lambda) \sim E|W_1|^\alpha \lambda^{-\alpha}$ as $\lambda \to \infty$. Therefore by domain of attraction condition for stable distributions (see Section~XVII.5 of \cite{feller:1971}), strong law of large numbers and \eqref{equality_of_distn}, Theorem~\ref{thm_series_repn_SalphaS} follows.
\end{proof}

\begin{defn} A random vector $\bX:=(X_1, X_2, \ldots, X_k)$ is said to follow multivariate S$\alpha$S distribution if each nondegenerate linear combination $\sum_{i=1}^k c_i X_i$ $(c_1, c_2, \ldots, c_k \in \mathbb{R})$ follows S$\alpha$S distribution. In this case, $\bX$ is called an S$\alpha$S random vector.
\end{defn}

The following result gives a very nice and useful characterization of an S$\alpha$S random vector.

\begin{thm}\label{thm_ch_fn_SalphaS_vector} $\bX \in \mathbb{R}^k$ is an S$\alpha$S random vector with $0<\alpha<2$ if and only if there exists a unique finite symmetric measure $\Gamma$ on the unit sphere $S_k:=\{\bx: \|\bx\|_2=1\}$ such that
\begin{equation}
E(e^{i \btheta^{T}\bx})=\exp\left\{-\int_{S_k}|\btheta^{T}\bx|^\alpha\,\Gamma(d\bx)\right\}. \label{form_ch_fn_SalphaS_vector}
\end{equation}
\end{thm}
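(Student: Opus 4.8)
The plan is to treat the two implications separately, with the forward (``only if'') direction carrying essentially all of the difficulty.

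First I would dispose of the easy (``if'') direction. Assume a $\Gamma$ as in \eqref{form_ch_fn_SalphaS_vector} exists and fix a coefficient vector $\ba = (a_1,\dots,a_k)$ so that $\ba^T\bX = \sum_{i=1}^k a_i X_i$ is nondegenerate. Substituting $\btheta = t\ba$ with $t \in \R$ into \eqref{form_ch_fn_SalphaS_vector} and using $|t\,\ba^T\bx|^\alpha = |t|^\alpha|\ba^T\bx|^\alpha$ to pull $|t|^\alpha$ out of the integral gives
\[
E\big(e^{i t\, \ba^T\bX}\big) = \exp\Big\{-|t|^\alpha \int_{S_k}|\ba^T\bx|^\alpha\,\Gamma(d\bx)\Big\} = e^{-\sigma^\alpha |t|^\alpha},
\]
where $\sigma^\alpha := \int_{S_k}|\ba^T\bx|^\alpha\,\Gamma(d\bx) < \infty$ since $\Gamma$ is finite. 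This is exactly the characteristic function of an S$\alpha$S$(\sigma)$ variable, so every linear combination is S$\alpha$S and $\bX$ is an S$\alpha$S vector.

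For the (``only if'') direction I would first record the scaling structure. Writing $\phi(\btheta) = E(e^{i\btheta^T\bX})$ and noting that $\btheta^T\bX$ is S$\alpha$S with some scale $\sigma(\btheta) \ge 0$, I get $\phi(\btheta) = e^{-\sigma(\btheta)^\alpha}$, and the univariate scaling forces $\sigma(t\btheta) = |t|\,\sigma(\btheta)$, so $\psi(\btheta) := \sigma(\btheta)^\alpha$ is continuous, nonnegative and homogeneous of degree $\alpha$. The key consequence is that $\bX$ is infinitely divisible: the vector $n^{-1/\alpha}\bX$ has characteristic function $\phi(\btheta)^{1/n}$, so $\phi^{1/n}$ is a genuine characteristic function for every $n$. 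I can therefore invoke the Lévy--Khintchine representation for a symmetric infinitely divisible law with no Gaussian part (symmetry kills the drift and the $\sin$ term), writing
\[
\psi(\btheta) = \int_{\R^k \setminus \{\bzero\}}\big(1 - \cos(\btheta^T\bx)\big)\,\nu(d\bx)
\]
for a symmetric Lévy measure $\nu$. The homogeneity $\psi(t\btheta) = |t|^\alpha\psi(\btheta)$ then pins down $\nu$ up to its angular part: in polar coordinates $\bx = r\bs$, $r>0$, $\bs \in S_k$, stability forces $\nu(dr\,d\bs) = r^{-1-\alpha}\,dr\,m(d\bs)$ for a finite measure $m$ on $S_k$. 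Substituting and computing the radial integral $\int_0^\infty (1-\cos v)\,v^{-1-\alpha}\,dv =: K_\alpha$, which is finite precisely because $0 < \alpha < 2$, gives $\psi(\btheta) = K_\alpha \int_{S_k}|\btheta^T\bs|^\alpha\,m(d\bs)$; symmetrizing $m$ (harmless, as $|\btheta^T\bs|^\alpha$ is even in $\bs$) and setting $\Gamma := K_\alpha\,\tilde m$ with $\tilde m$ the symmetrization produces the required symmetric finite measure.

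Finally, uniqueness, which is where I expect the real obstacle to lie. Since the characteristic function determines the law and conversely, two representing measures yield the same exponent, so $\mu := \Gamma_1 - \Gamma_2$ is a finite symmetric signed measure with $\int_{S_k}|\btheta^T\bs|^\alpha\,\mu(d\bs) = 0$ for all $\btheta \in \R^k$, and I must deduce $\mu = 0$. This injectivity is exactly the analytic statement that the kernels $\{\bs \mapsto |\btheta^T\bs|^\alpha : \btheta \in \R^k\}$ separate symmetric measures, and it holds \emph{because} $\alpha$ is not an even integer (for $\alpha = 2$ the kernel is the quadratic form $\btheta^T(\bs\bs^T)\btheta$, which determines only $\int \bs\bs^T\,\Gamma(d\bs)$, and uniqueness genuinely fails). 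I would establish it by expanding $|\btheta^T\bs|^\alpha$ in spherical harmonics (Gegenbauer polynomials) and checking that for $\alpha \notin 2\Zbold$ none of the relevant coefficients vanish, so that $\int|\btheta^T\bs|^\alpha\,\mu(d\bs) \equiv 0$ forces every harmonic component of $\mu$ to vanish; the symmetry restriction on $\Gamma$ removes the otherwise-unavoidable ambiguity coming from $|\btheta^T\bs|^\alpha = |\btheta^T(-\bs)|^\alpha$. Modulo this harmonic-analysis input, both existence and uniqueness follow; see \cite{samorodnitsky:taqqu:1994}, Theorem~2.3.1, for the details.
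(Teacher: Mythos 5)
The paper does not actually prove this theorem: its entire ``proof'' is the citation to Kuelbs (1973), so there is no in-text argument to compare yours against. Your outline is the standard route (essentially the proof of Theorem~2.3.1 in Samorodnitsky--Taqqu, which you cite at the end), and it is correct in structure. The ``if'' direction is fine as written. In the ``only if'' direction, the one step that really matters -- that $\phi^{1/n}=\phi(n^{-1/\alpha}\,\cdot\,)$ is automatically a characteristic function, so symmetry together with the common index $\alpha$ yields infinite divisibility and sidesteps the known failures of ``all projections stable $\Rightarrow$ vector stable'' in the non-symmetric $\alpha<1$ case -- is present and correct. Two steps are asserted rather than argued and each deserves a line: the vanishing of the Gaussian part follows from the uniqueness of the L\'evy--Khintchine triplet together with the mismatch between the degree-$2$ homogeneity of the Gaussian term and the degree-$\alpha$ homogeneity of $\psi$; and the polar factorization $\nu(dr\,d\bx)=r^{-1-\alpha}\,dr\,m(d\bx)$ follows from the scaling relation $\nu(sB)=s^{-\alpha}\nu(B)$, which is itself a consequence of the uniqueness of the L\'evy measure under the same rescaling. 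For uniqueness of $\Gamma$ you reach for Funk--Hecke/Gegenbauer machinery; this does work, and your remarks about even integers and about why the symmetry restriction on $\Gamma$ is needed are exactly right, but it is heavier than necessary: any symmetric finite $\Gamma$ satisfying \eqref{form_ch_fn_SalphaS_vector} reconstructs the L\'evy measure of $\bX$ as a constant multiple of $r^{-1-\alpha}\,dr\,\Gamma(d\bx)$, so uniqueness of the L\'evy measure of an infinitely divisible law already forces uniqueness of $\Gamma$ with no harmonic analysis. Modulo these fillable elisions, your proposal is a correct proof sketch of a result the survey itself only quotes.
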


\begin{proof} See \cite{kuelbs:1973}.
\end{proof}

\begin{defn} The measure $\Gamma$ as in \eqref{form_ch_fn_SalphaS_vector} is called the spectral measure of the S$\alpha$S random vector $\bX$.
\end{defn}

\section{S$\alpha$S Random Measures and Integrals} \label{sec:sas_rm_int}

We shall now introduce S$\alpha$S random measures and integral with respect to such measures. In fact, we shall first introduce the integral and then define the random measure. Let $(E, \mathcal{E}, m)$ be a $\sigma$-finite measure space, $0<\alpha<2$ and
\[
F:=L^\alpha(E, \mathcal{E}, m)=\left\{f:E \to \mathbb{R}: \|f\|_\alpha<\infty\right\},
\]
where 
\[
\|f\|_\alpha:=\left(\int_E |f|^\alpha dm\right)^{1/\alpha}.
\]
Note that $F$ is a Banach space when $1 \leq \alpha <2$ (but not a Hilbert space) with the norm $\|\cdot\|_\alpha$. However for $0<\alpha<1$, $\|\cdot\|_\alpha$ is not even a norm and hence $F$ has very little structure. It is a metric space with the distance function $d_\alpha(f,g):=\|f-g\|_\alpha^\alpha$. In particular, $F$ is a very rigid space for all $\alpha \in (0,2)$ in the sense that it has very few isometries. We shall exploit this rigidity in the second half of this article.

Roughly speaking, our next goal is to define an S$\alpha$S process $\{I(f): f \in F\}$ indexed by $F$ so that $M(A):=I(\mathbbm{1}_A)$, $A \in \mathcal{E}_0:=\{A \in \mathcal{E}: m(A)<\infty\}$ becomes an ``S$\alpha$S random measure'' and $I(f)$ becomes the ``integral with respect to $M$''. We attain this goal as follows. Given $f_1, f_2, \ldots, f_k \in F$, we define a probability measure $P_{f_1, f_2,\ldots, f_k}$ on $\mathbb{R}^k$ by its characteristic function
\begin{equation}
\psi_{f_1, f_2,\ldots, f_k}=\exp{\left\{-\Big\|\sum_{j=1}^k \theta_j f_j\Big\|_\alpha^\alpha\right\}}. \label{form_ch_fn_fd_If}
\end{equation}

\begin{propn} \label{propn_psi_f_well_defined} For any $f_1, f_2, \ldots, f_k \in F$, $\psi_{f_1, f_2,\ldots, f_k}$ is the characteristic function of an S$\alpha$S random vector. In particular, $P_{f_1, f_2,\ldots, f_k}$ is well-defined.
\end{propn}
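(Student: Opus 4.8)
The plan is to recognise the right-hand side of \eqref{form_ch_fn_fd_If} as an instance of the spectral representation \eqref{form_ch_fn_SalphaS_vector} and then quote Theorem~\ref{thm_ch_fn_SalphaS_vector}. First I would introduce the measurable map $\phi : E \to \mathbb{R}^k$ given by $\phi(s) = (f_1(s), \ldots, f_k(s))$, so that $\sum_{j=1}^k \theta_j f_j(s) = \btheta^{T}\phi(s)$ and therefore
\[
\Big\|\sum_{j=1}^k \theta_j f_j\Big\|_\alpha^\alpha = \int_E \big|\btheta^{T}\phi(s)\big|^\alpha \, m(ds).
\]
On $\{s : \phi(s) \neq \mathbf{0}\}$ I would write the polar decomposition $\phi(s) = \|\phi(s)\|_2 \, u(s)$ with $u(s) := \phi(s)/\|\phi(s)\|_2 \in S_k$, use the homogeneity $\big|\btheta^{T}\phi(s)\big|^\alpha = \|\phi(s)\|_2^\alpha \big|\btheta^{T}u(s)\big|^\alpha$, and note that the set $\{\phi = \mathbf{0}\}$ contributes nothing to the integral.

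Next I would define a measure $\Gamma_0$ on $S_k$ as the image under $u$ of the measure $\|\phi(s)\|_2^\alpha \, m(ds)$ restricted to $\{\phi \neq \mathbf{0}\}$, that is, $\Gamma_0(B) = \int_{\{\phi \neq \mathbf{0},\, u \in B\}} \|\phi\|_2^\alpha \, dm$ for Borel $B \subseteq S_k$. The change-of-variables formula then yields
\[
\int_E \big|\btheta^{T}\phi(s)\big|^\alpha \, m(ds) = \int_{S_k} \big|\btheta^{T}\bx\big|^\alpha \, \Gamma_0(d\bx).
\]
The step to watch is finiteness of $\Gamma_0$: one has $\Gamma_0(S_k) = \int_E \|\phi\|_2^\alpha \, dm$, and since $0 < \alpha \le 2$ the elementary comparison of $\ell^p$ quantities on $\mathbb{R}^k$ gives $\|\phi(s)\|_2^\alpha \le \sum_{j=1}^k |f_j(s)|^\alpha$, whence $\Gamma_0(S_k) \le \sum_{j=1}^k \|f_j\|_\alpha^\alpha < \infty$ because each $f_j \in F = L^\alpha(E,\EE,m)$. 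This is exactly where the hypotheses $f_j \in L^\alpha$ and $\alpha \le 2$ are used, and it is the only genuinely delicate point; the measurability of $u$ and the change of variables are routine.

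Finally, since the measure $\Gamma_0$ need not be symmetric, I would replace it by its symmetrization $\Gamma(B) := \tfrac12\big(\Gamma_0(B) + \Gamma_0(-B)\big)$, where $-B = \{-\bx : \bx \in B\}$; because $\bx \mapsto |\btheta^{T}\bx|^\alpha$ is even the integral is unchanged, and $\Gamma$ is a finite symmetric measure on $S_k$. Consequently
\[
\psi_{f_1, f_2, \ldots, f_k}(\btheta) = \exp\left\{-\int_{S_k} \big|\btheta^{T}\bx\big|^\alpha \, \Gamma(d\bx)\right\}
\]
is precisely of the form \eqref{form_ch_fn_SalphaS_vector}. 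Since Theorem~\ref{thm_ch_fn_SalphaS_vector} identifies every finite symmetric measure on $S_k$ as the spectral measure of some S$\alpha$S random vector, it follows that $\psi_{f_1, f_2, \ldots, f_k}$ is the characteristic function of an S$\alpha$S random vector, and hence that $P_{f_1, f_2, \ldots, f_k}$ is a well-defined probability measure on $\mathbb{R}^k$.
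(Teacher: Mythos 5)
Your proposal is correct and is essentially the paper's own argument written out in more detail: the measure $\Gamma$ the paper defines via $\pi(A)$ and the density $\bigl(\sum_j f_j^2\bigr)^{\alpha/2}$ is exactly your symmetrized pushforward $\tfrac12\bigl(\Gamma_0(\cdot)+\Gamma_0(-\cdot)\bigr)$ of $\|\phi\|_2^\alpha\,dm$ under the radial projection $u$. The only difference is that you spell out the finiteness check $\|\phi\|_2^\alpha \le \sum_j |f_j|^\alpha$ (valid since $\alpha\le 2$), which the paper leaves as ``easy to check.''
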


\begin{proof}  Let $E_{+}:=\left\{x \in E: \sum_{j=1}^k \big(f_j(x)\big)^2>0\right\}$. Define a measure $\Gamma$ on the unit sphere $S_k$ as
\[
\Gamma(A):= \frac{1}{2} \int_{\pi(A)}\Big(\sum_{j=1}^k f_j^2\Big)^{\alpha/2} dm + \frac{1}{2} \int_{\pi(-A)} \Big(\sum_{j=1}^k f_j^2\Big)^{\alpha/2} dm,\;\;A \subseteq S_k,
\]
where
\[
\pi(A):= \left\{x \in E_{+}: \left(\frac{f_1(x)}{\sqrt{\sum_{j=1}^k \big(f_j(x)\big)^2}},  \ldots, \frac{f_k(x)}{\sqrt{\sum_{j=1}^k \big(f_j(x)\big)^2}}\right) \in A\right\}.
\]
It is easy to check that $\Gamma$ is a symmetric finite measure on $S_k$ such that $\psi_{f_1, f_2,\ldots, f_k}$ is of the form \eqref{form_ch_fn_SalphaS_vector}. This completes the proof.
\end{proof}

From Proposition~\ref{propn_psi_f_well_defined} and Kolmogorov extention theorem, it follows that there exists an S$\alpha$S process $\{I(f): f \in F\}$ with finite-dimensional distributions of the form \eqref{form_ch_fn_fd_If}. In particular, each $I(f) \sim S\alpha S(\|f\|_\alpha)$.

\begin{propn}[$I$ is linear and independently scattered] \label{exc_If_lin_ind_sct} For all functions $f_1, f_2, \ldots, f_k \in F$ and for all $a_1, a_2, \ldots a_k\in \mathbb{R}$,
\[
I(a_1 f_1 + a_2 f_2 + \cdots + a_k f_k) = a_1 I(f_1) + a_2 I(f_2) + \cdots + a_k I(f_k)
\]
almost surely. If further $f_1, f_2, \ldots, f_k$ have pairwise disjoint support, then $I(f_1), I(f_2), \ldots, I(f_k)$ are independent.
\end{propn}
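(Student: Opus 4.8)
The plan is to prove both assertions by reducing statements about the almost-sure behaviour of $I$ to statements about finite-dimensional distributions, which are completely determined by the characteristic-function formula \eqref{form_ch_fn_fd_If}.

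\textbf{Linearity.} First I would establish the additivity and homogeneity claim by showing that the random variable
\[
D := I(a_1 f_1 + \cdots + a_k f_k) - \big(a_1 I(f_1) + \cdots + a_k I(f_k)\big)
\]
equals $0$ almost surely. Since an almost-sure equality is equivalent to the difference being degenerate at $0$, it suffices to show that $D \sim S\alpha S(0)$, i.e.\ that $D$ has the characteristic function identically equal to $1$. The key step is to compute the joint characteristic function of the $(k+1)$-dimensional vector $\big(I(a_1 f_1 + \cdots + a_k f_k), I(f_1), \ldots, I(f_k)\big)$ using \eqref{form_ch_fn_fd_If}, and then evaluate the characteristic function of the single linear combination $D$ by choosing the appropriate coefficients $(\theta_0, \theta_1, \ldots, \theta_k) = (1, -a_1, \ldots, -a_k)$. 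The formula \eqref{form_ch_fn_fd_If} then yields
\[
E(e^{i t D}) = \exp\!\left\{ - \Big\| t\Big(\textstyle\sum_j a_j f_j\Big) - t \sum_j a_j f_j \Big\|_\alpha^\alpha \right\} = \exp\{-\|0\|_\alpha^\alpha\} = 1
\]
for every $t \in \mathbb{R}$, where the cancellation inside the norm is exactly the pointwise linearity of the (ordinary) function-valued map $(a_1, \ldots, a_k) \mapsto \sum_j a_j f_j$ in $F$. Hence $D$ is degenerate at $0$, which is the claim.

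\textbf{Independence on disjoint supports.} For the second assertion I would verify that the joint characteristic function of $(I(f_1), \ldots, I(f_k))$ factorises into the product of the marginal ones whenever the $f_j$ have pairwise disjoint supports. Starting from \eqref{form_ch_fn_fd_If}, the exponent is $-\big\|\sum_j \theta_j f_j\big\|_\alpha^\alpha = -\int_E \big|\sum_j \theta_j f_j\big|^\alpha\, dm$. The crucial observation is that, on the disjoint supports, at every point $x \in E$ at most one summand $\theta_j f_j(x)$ is nonzero, so the integrand collapses pointwise to $\sum_j |\theta_j f_j(x)|^\alpha = \sum_j |\theta_j|^\alpha |f_j(x)|^\alpha$. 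Integrating gives $-\sum_j |\theta_j|^\alpha \|f_j\|_\alpha^\alpha$, and therefore
\[
\psi_{f_1, \ldots, f_k}(\theta_1, \ldots, \theta_k) = \prod_{j=1}^k \exp\{-|\theta_j|^\alpha \|f_j\|_\alpha^\alpha\} = \prod_{j=1}^k E(e^{i\theta_j I(f_j)}),
\]
which is precisely the factorisation that characterises independence of the coordinates.

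I expect neither step to present a genuine obstacle: both reduce to the pointwise algebra of the integrand in \eqref{form_ch_fn_fd_If} together with the fact that equality (respectively independence) in distribution is detected by characteristic functions. The only point requiring a modicum of care is the disjoint-support computation, where one must argue that the $L^\alpha$ quasi-norm (which for $0 < \alpha < 1$ is not even a norm) nonetheless splits additively on disjoint supports; this is immediate from the pointwise collapse of the integrand and does not rely on the triangle inequality, so it holds for all $\alpha \in (0,2)$.
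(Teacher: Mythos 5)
Your proof is correct and is exactly the argument the paper intends: the proposition is stated without proof (it was an exercise in the original lecture notes), and the standard route is precisely yours, namely reducing both claims to the joint characteristic function \eqref{form_ch_fn_fd_If} — showing the difference $D$ is degenerate at $0$ for linearity, and that the exponent splits additively over disjoint supports for independence. No gaps.
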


\begin{defn}
Let $(E, \mathcal{E}, m)$ be a $\sigma$-finite measure space. A collection $\{M(A): A \in \mathcal{E}_0\}$ of random variables defined on the same probability space is called an S$\alpha$S random measure on $E$ with control measure $m$ if
\begin{enumerate}
\item each $M(A) \sim S\alpha S\big((m(A))^{1/\alpha}\big)$,
\item if $A_1, A_2, \ldots, A_k \in \mathcal{E}_0$ are pairwise disjoint, then $M(A_1), M(A_2),$ $\ldots$, $M(A_k)$ are independent (i.e., $M$ is independently scattered),
\item if $A_1, A_2, \ldots$ are pairwise disjoint such that $\bigcup_{i=1}^\infty A_i \in \mathcal{E}_0$, then $M(\bigcup_{i=1}^\infty A_i)= \sum_{i=1}^\infty M(A_i)$ almost surely (i.e., $M$ is $\sigma$-additive).
\end{enumerate}

\begin{propn} For every $\sigma$-finite measure space $(E, \mathcal{E}, m)$ there exists an S$\alpha$S random measure on $E$ with control measure $m$.
\end{propn}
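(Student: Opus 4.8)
The plan is to construct the random measure directly from the already-built S$\alpha$S process $\{I(f): f \in F\}$ by setting $M(A) := I(\mathbbm{1}_A)$ for every $A \in \mathcal{E}_0$; this is legitimate because $\mathbbm{1}_A \in F$ whenever $m(A) < \infty$. I would then verify the three defining properties in turn. Property~(1) is immediate: since each $I(f) \sim S\alpha S(\|f\|_\alpha)$ and $\|\mathbbm{1}_A\|_\alpha = (\int_E \mathbbm{1}_A\, dm)^{1/\alpha} = (m(A))^{1/\alpha}$, we get $M(A) \sim S\alpha S\big((m(A))^{1/\alpha}\big)$. Property~(2) is equally quick: if $A_1, \ldots, A_k$ are pairwise disjoint then the indicators $\mathbbm{1}_{A_1}, \ldots, \mathbbm{1}_{A_k}$ have pairwise disjoint supports, so the independence of $M(A_1), \ldots, M(A_k)$ follows directly from the ``independently scattered'' half of Proposition~\ref{exc_If_lin_ind_sct}.

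The only real content is Property~(3), $\sigma$-additivity. Write $A = \bigcup_{i=1}^\infty A_i$ with the $A_i$ pairwise disjoint and $m(A) < \infty$, and put $B_n = \bigcup_{i=1}^n A_i$ and $R_n = A \setminus B_n$. Finite additivity is free: by the linearity half of Proposition~\ref{exc_If_lin_ind_sct}, $\sum_{i=1}^n M(A_i) = I\big(\sum_{i=1}^n \mathbbm{1}_{A_i}\big) = I(\mathbbm{1}_{B_n}) = M(B_n)$ almost surely. To handle the limit, I would estimate the tail $M(A) - M(B_n) = I(\mathbbm{1}_A - \mathbbm{1}_{B_n}) = I(\mathbbm{1}_{R_n})$, which is distributed as $S\alpha S\big((m(R_n))^{1/\alpha}\big)$. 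Since $m(A) < \infty$ and $R_n \downarrow \emptyset$, continuity of $m$ from above gives $m(R_n) \to 0$, so the scale parameter of $I(\mathbbm{1}_{R_n})$ tends to $0$; hence $I(\mathbbm{1}_{R_n}) \to 0$ in distribution, and therefore in probability. Equivalently, the partial sums $S_n := \sum_{i=1}^n M(A_i)$ converge to $M(A)$ in probability.

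The last step, which I expect to be the main technical obstacle, is upgrading this to almost sure convergence as demanded by the definition. Here I would use that the summands $M(A_i) = I(\mathbbm{1}_{A_i})$ are independent (again by Property~(2), since the $A_i$ are disjoint), and invoke the classical equivalence theorem for series of independent random variables: a series of independent summands that converges in probability also converges almost surely. This yields that $S_n$ converges almost surely, and since its in-probability limit is $M(A)$, the almost sure limit must coincide with $M(A)$. Hence $M\big(\bigcup_{i=1}^\infty A_i\big) = \sum_{i=1}^\infty M(A_i)$ almost surely, which completes the verification of all three properties and thus the proof.
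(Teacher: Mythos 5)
Your proposal is correct and follows essentially the same route as the paper: define $M(A):=I(\mathbbm{1}_A)$, read off properties (1) and (2) from the scale parameter $\|\mathbbm{1}_A\|_\alpha=(m(A))^{1/\alpha}$ and the independently-scattered property of $I$, obtain finite additivity from linearity, observe the tail remainder is $S\alpha S$ with scale parameter tending to $0$ so the partial sums converge in probability, and upgrade to almost sure convergence via the L\'evy equivalence theorem for series of independent summands. No gaps.
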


\begin{proof} Define $M(A):=I(\mathbbm{1}_A)$, $A \in \mathcal{E}_0$. All the properties of an $S\alpha S$ random measure follows from the properties of $I$ mentioned above except the $\sigma$-additivity, which can be established as follows. Note that finite additivity follows from Proposition~\ref{exc_If_lin_ind_sct} and therefore,
\[
M\Big(\bigcup_{i=1}^\infty A_i\Big) - \sum_{i=1}^n M(A_i) \eqas M\Big(\bigcup_{i=n+1}^\infty A_i\Big) \sim S\alpha S \left(\Big(\sum_{i=n+1}^\infty m(A_i)\Big)^{1/\alpha}\right).
\]
The above observation yields, by Levy's continuity theorem, that the partial sum $\sum_{i=1}^n M(A_i) \probconv M(\bigcup_{i=1}^\infty A_i)$, which in turn implies that this convergence holds almost surely since $M(A_1), M(A_2), \ldots$ are independent.
\end{proof}

\end{defn}

\normalsize

Here is a result that gives the motivation behind thinking $I(f)$ as an ``integral of $f$ with respect to $M$''.

\begin{thm} $\{I(f): f \in F\}$ defined above satisfies the following properties.
\begin{enumerate}
\item If $f \in F$ is a simple function of the form $f=\sum_{j=1}^k c_j \mathbbm{1}_{A_j}$ with pairwise disjoint $A_1, A_2, \ldots, A_k \in \mathcal{E}_0$, then by linearity of $I$,
    \[
    I(f) \eqas \sum_{j=1}^k c_j I(\mathbbm{1}_{A_j}) =  \sum_{j=1}^k c_j M(A_j)\,.
    \]
\item Let $f \in F$ be any function (not necessarily simple). Take a sequence of simple functions $\{f_n\}_{n\geq 1}$ such that $f_n \asconv f$ and $|f_n| \leq g$ for some $g \in F$ (such a sequence always exists for any $f \in F$), then $I(f_n) \probconv I(f)$.
\end{enumerate}
\end{thm}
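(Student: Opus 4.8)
The plan is to establish the two properties separately, treating (1) as essentially a definitional consequence and focusing the real work on the approximation statement in (2). For part (1), if $f = \sum_{j=1}^k c_j \mathbbm{1}_{A_j}$ with pairwise disjoint $A_j \in \mathcal{E}_0$, then by the linearity of $I$ established in Proposition~\ref{exc_If_lin_ind_sct} we immediately get $I(f) \eqas \sum_{j=1}^k c_j I(\mathbbm{1}_{A_j})$ almost surely, and since $M(A_j) := I(\mathbbm{1}_{A_j})$ by the definition of the random measure, the identity $I(f) \eqas \sum_{j=1}^k c_j M(A_j)$ follows with nothing further to prove.

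For part (2) I would first record the existence of the approximating sequence. Given any $f \in F = L^\alpha(E,\mathcal{E},m)$, one can construct simple functions $f_n$ with $f_n \asconv f$ pointwise and $|f_n| \leq |f|$ everywhere by the standard truncation-and-dyadic-approximation of measurable functions; taking $g = |f| \in F$ supplies the required dominating function. The heart of the matter is then to show $I(f_n) \probconv I(f)$. Since $I$ is linear, $I(f_n) - I(f) = I(f_n - f)$ almost surely, and each $I(f_n - f) \sim S\alpha S(\|f_n - f\|_\alpha)$ because $I(h) \sim S\alpha S(\|h\|_\alpha)$ for every $h \in F$. Thus it suffices to show that the scale parameter tends to zero, i.e. $\|f_n - f\|_\alpha \to 0$.

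To see that $\|f_n - f\|_\alpha \to 0$, I would invoke dominated convergence: $|f_n - f|^\alpha \to 0$ pointwise a.e. by $f_n \asconv f$, and $|f_n - f|^\alpha \leq (|f_n| + |f|)^\alpha \leq (2|f|)^\alpha = 2^\alpha |f|^\alpha$, which is $m$-integrable since $f \in L^\alpha$. Hence $\int_E |f_n - f|^\alpha \, dm \to 0$, so $\|f_n - f\|_\alpha \to 0$. Finally, convergence of the scale parameters to zero forces convergence in probability of the corresponding $S\alpha S$ variables to $0$: the characteristic function of $I(f_n - f)$ is $\exp(-\|f_n-f\|_\alpha^\alpha |\theta|^\alpha) \to 1$ for each $\theta$, so by L\'evy's continuity theorem $I(f_n - f) \probconv 0$, which is exactly $I(f_n) \probconv I(f)$.

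The only genuinely delicate point is the construction and verification of the dominated approximating sequence $\{f_n\}$, and in particular arranging a single $g \in F$ dominating all the $f_n$; the choice $g = |f|$ resolves this cleanly, so the main obstacle is really just bookkeeping rather than a conceptual hurdle. Everything else follows from the linearity and the identification $I(h) \sim S\alpha S(\|h\|_\alpha)$ already available to us.
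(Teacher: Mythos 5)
Your proposal is correct and is essentially the standard argument that the paper itself skips, deferring to pp.~122--124 of Samorodnitsky and Taqqu: part (1) is immediate from linearity, and part (2) reduces via linearity and the identification $I(h)\sim S\alpha S(\|h\|_\alpha)$ to showing $\|f_n-f\|_\alpha\to 0$ by dominated convergence, after which vanishing scale parameters force $I(f_n-f)\probconv 0$ through the characteristic function. One cosmetic remark: the theorem allows an arbitrary dominating $g\in F$, so in the dominated convergence step you should bound $|f_n-f|^\alpha$ by a constant multiple of $g^\alpha+|f|^\alpha$ (noting $|f|\le g$ a.e.\ in the limit) rather than by $2^\alpha|f|^\alpha$, which is tailored to your particular construction with $g=|f|$; this changes nothing of substance.
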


\begin{proof} The first part follows trivially from linearity of $I$. For the second part (including existence of such a sequence), see pg 122 - 124 of \cite{samorodnitsky:taqqu:1994}.
\end{proof}

In view of the above result, we shall denote $I(f)$ by $\int_E f dM$ for $f \in F$. This motivates the following definition.
\begin{defn}
The S$\alpha$S process $\{I(f)\}_{f \in F}$ is called the integral with respect to the random measure $M$ and this is denoted by
\begin{equation}
\{I(f)\}_{f\in F} \eqlaw \left\{\int_E f(x) M(dx)\right\}_{f\in F}. \label{defn_int_wrt_M}
\end{equation}
\end{defn}
\noindent We would like to emphasize that the notation \eqref{defn_int_wrt_M} is a fancy way of writing that $\{I(f)\}_{f \in F}$ is a stochastic process indexed by $F$ such that for any $f_1, f_2, \ldots, f_k \in F$, the joint characteristic function of $\big(I(f_1), I(f_2), \ldots, I(f_k)\big)$ is given by \eqref{form_ch_fn_fd_If}. In other words, the integral w.r.t. the S$\alpha$S random measure is defined only in distribution. 

\begin{remark} \label{remark_I_on_F_0} \textnormal{For any $F_0 \subseteq F$, we can use the notation $\left\{\int_E f dM\right\}_{f \in F_0}$ to denote the S$\alpha$S process $\{I(f)\}_{f \in F_0}$. This remark will be useful later in this paper because we shall always work with a ``suitably chosen'' proper subset of $F$.}
\end{remark}

\begin{example} [S$\alpha$S Levy Motion] \textnormal{Take $E = [0,\infty)$, $m=Leb$ (the Lebesgue measure on $[0,\infty)$) and let $M$ be an S$\alpha$S random measure on $[0,\infty)$ with control measure $Leb$. This $M$ is called S$\alpha$S Levy Random Measure.}

\textnormal{Observe that $X_t:=M\big([0,t]\big)$, $t \geq 0$ is an S$\alpha$S process satisfying the following properties:}
\begin{enumerate}
\item \textnormal{$X_0 \eqas 0$.}
\item \textnormal{$\{X_t\}_{t \geq 0}$ has independent increments, i.e., for all $0 \leq t_1 < t_2 < \cdots \,t_k <\infty$, $X_{t_1}, X_{t_2}-X_{t_1}, X_{t_3}-X_{t_2}, \ldots, X_{t_k}-X_{t_{k-1}}$ are independent. (Follows from the fact that $M$ is independently scattered.)}
\item \textnormal{For all $0 \leq s < t < \infty$, $X_t - X_s \sim S\alpha S\big((t-s)^{1/\alpha}\big)$. In particular, $\{X_t\}_{t \geq 0}$ has stationary increments, i.e., for all $\tau \geq 0$,}
    \[
    \{X_t - X_0\}_{t \geq 0} \eqlaw \{X_{t+\tau}-X_\tau\}_{t \geq 0}.
    \]
\item \textnormal{$\{X_t\}_{t \geq 0}$ is self-similar with index ${1}/{\alpha}$, i.e., for all $c>0$,}
    \[
    \{X_{ct}\}_{t \geq 0} \eqlaw \{c^{1/\alpha} X_{t}\}_{t \geq 0}
    \]
\end{enumerate}
\textnormal{$\{X_t\}_{t \geq 0}$ described above is called an S$\alpha$S Levy motion. It is the analogue of Brownian motion in the S$\alpha$S world.}
\end{example}

\section{Integral Representation of an S$\alpha$S Random Field} \label{sec:int_repn_sas_rf}

\begin{defn} A stochastic process $\{X_t\}_{t \in T}$ is called an S$\alpha$S process (indexed by $T$) if all of its finite-dimensional distributions are multivariate S$\alpha$S distributions. When $T=\mathbb{Z}^d$ or $\mathbb{R}^d$ for some $d \in \mathbb{N}$, $\{X_t\}_{t \in T}$ is called an S$\alpha$S random field.
\end{defn}

From now on, we shall only deal with S$\alpha$S random fields. In order to keep life simple, we shall concentrate on the discrete parameter case, i.e., $T=\mathbb{Z}^d$ for some $d \geq 1$ .

\begin{defn}  Let $(S, \mathcal{S}, \mu)$ be a $\sigma$-finite standard Borel space. A family of functions $\{f_t\}_{t \in \mathbb{Z}^d} \subseteq L^\alpha(S, \mathcal{S}, \mu)$ is called an integral representation of an S$\alpha$S random field $\XZd$ if
\begin{equation}
\XZd \eqlaw \left\{\int_S f_t(s)M(ds)\right\}_{t \in \Zd}, \label{defn_int_repn}
\end{equation}
where $M$ is an S$\alpha$S random measure on $S$ with control measure $\mu$.
\end{defn}

Recall that \eqref{defn_int_repn} simply means that for all $t_1, t_2, \ldots, t_k \in \Zd$,
\begin{equation}
E\big(e^{i\sum_{j=1}^k \theta_j X_{t_j}}\big) = \exp{\left\{-\Big\|\sum_{j=1}^k \theta_j f_{t_j}\Big\|_\alpha^\alpha\right\}}, \; \theta_1, \theta_2, \ldots, \theta_k \in \R. \label{meaning_of_int_repn}
\end{equation}

\normalsize

\begin{thm} \label{thm_existence_int_repn} Every S$\alpha$S random field $\XZd$ has an integral representation.
\end{thm}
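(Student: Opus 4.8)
The goal is to show that every $S\alpha S$ random field $\XZd$ admits an integral representation, i.e.\ a $\sigma$-finite standard Borel space $(S,\mathcal{S},\mu)$ and functions $\{f_t\}_{t\in\Zd}\subseteq L^\alpha(S,\mathcal{S},\mu)$ satisfying \eqref{meaning_of_int_repn}. The natural approach is to realize the field as an integral against an $S\alpha S$ random measure built from the countable family of spectral measures of its finite-dimensional distributions. First I would invoke Theorem~\ref{thm_ch_fn_SalphaS_vector}: for each finite subset $\{t_1,\dots,t_k\}\subseteq\Zd$, the random vector $(X_{t_1},\dots,X_{t_k})$ has a unique symmetric spectral measure $\Gamma_{t_1,\dots,t_k}$ on the unit sphere $S_k$, and its joint characteristic function is $\exp\{-\int_{S_k}|\sum_j\theta_j x_j|^\alpha\,\Gamma(d\mathbf{x})\}$. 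The representation problem is thus to find a single $L^\alpha$ model reproducing all these consistent spectral measures simultaneously.

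**Key steps.** The cleanest route is to take $S=\R^{\Zd}$ (equivalently $\R^{\mathbb{N}}$ after enumerating $\Zd$), equip it with its product Borel $\sigma$-algebra $\mathcal{S}$, and use the coordinate maps. Concretely, I would let $\mu$ be a $\sigma$-finite measure on $S$ supported on the ``sphere at infinity'' / radial structure that encodes the spectral measures consistently, and set $f_t(\mathbf{x})=x_t$, the $t$-th coordinate (suitably normalized so each $f_t\in L^\alpha$). The consistency of the finite-dimensional spectral measures $\{\Gamma_{t_1,\dots,t_k}\}$ under marginalization — which is automatic because they all come from one field — is exactly what guarantees, via a Kolmogorov-type extension argument, that a single control measure $\mu$ can be chosen so that for every finite index set the pushforward of $(\sum_j\theta_j f_{t_j})^\alpha\mu$ onto $S_k$ reproduces $\Gamma_{t_1,\dots,t_k}$. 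Once $\mu$ and $\{f_t\}$ are in place, equation \eqref{meaning_of_int_repn} is verified by a direct computation: $\int_S|\sum_j\theta_j f_{t_j}|^\alpha\,d\mu$ equals $\int_{S_k}|\sum_j\theta_j x_j|^\alpha\,\Gamma_{t_1,\dots,t_k}(d\mathbf{x})$ by the defining change of variables, matching the characteristic function given by Theorem~\ref{thm_ch_fn_SalphaS_vector}. Finally, the standard Borel and $\sigma$-finiteness requirements hold because $\R^{\mathbb{N}}$ with its product structure is standard Borel, and $\mu$ can be arranged $\sigma$-finite by the usual normalization $E|X_0|^{\alpha}<\infty$-type control on the total masses.

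**The main obstacle.** The delicate point is not the final characteristic-function check (routine once the objects exist), but \emph{assembling a single $\sigma$-finite control measure $\mu$ and family $\{f_t\}$ consistent across all finite-dimensional spectral measures simultaneously}. Each $\Gamma_{t_1,\dots,t_k}$ lives on a different sphere $S_k$, and one must glue these together into one measure space without contradiction. The right way to handle this is to build $\mu$ as a measure on $S=\R^{\Zd}$ whose finite-dimensional ``projections'' recover the $\Gamma$'s, using the fact that the family of spectral measures inherits a projective-consistency property from the single underlying field; this is essentially a measure-theoretic extension theorem adapted to the $L^\alpha$-spectral setting rather than the probabilistic Kolmogorov setting. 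I would expect the technical heart of the argument to be verifying this consistency and the $\sigma$-finiteness of the resulting $\mu$ carefully. Since this is a survey, the honest presentation is to outline this construction and cite the standard source \cite{samorodnitsky:taqqu:1994} for the full measure-theoretic details.
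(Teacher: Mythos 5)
The paper does not actually prove this theorem: it simply points to \cite{bretagnolle:dacunha-castelle:krivine:1966}, \cite{schreiber:1972}, \cite{schilder:1970}, \cite{kuelbs:1973} and \cite{hardin:1982b}. Your outline is therefore not ``the same as the paper'' in any meaningful sense, but it does follow one of the two standard routes in those references, namely the Kuelbs-style construction: build a single control measure as a projective limit of the finite-dimensional spectral measures $\Gamma_{t_1,\dots,t_k}$ furnished by Theorem~\ref{thm_ch_fn_SalphaS_vector}, and take the $f_t$ to be (radially weighted) coordinate maps. The other standard route, which is closer to what the primary citations and \cite{hardin:1982b} actually do, is functional-analytic: the quantity $\bigl(-\log E\exp\{i\sum_j\theta_jX_{t_j}\}\bigr)^{1/\alpha}$ defines an $L^\alpha$-type quasi-norm on the linear span of the process, and the embedding theorems for such structures into $L^\alpha(S,\mu)$ produce the $f_t$ directly as images of the $X_t$ under an isometry. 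Your route is more concrete and probabilistic; the embedding route avoids the gluing problem entirely but needs the deep $L^p$-embedding machinery. Both are legitimate, and since the index set $\Zd$ is countable your projective-limit strategy is viable.

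Two caveats on your sketch. First, the consistency you invoke is not ordinary Kolmogorov marginal consistency: passing from $\Gamma_{t_1,\dots,t_k}$ on $S_k$ to $\Gamma_{t_1,\dots,t_{k-1}}$ on $S_{k-1}$ requires projecting, discarding the mass that projects to the origin, radially renormalizing, and reweighting each point by the $\alpha$-th power of the length of its projection. Consistency of this reweighted system does follow from uniqueness of the spectral measure in Theorem~\ref{thm_ch_fn_SalphaS_vector}, but the extension theorem has to be applied to a suitably transformed (genuinely consistent) family of finite measures, not to the $\Gamma$'s as they stand; this is exactly the technical content of \cite{kuelbs:1973} and you should not present it as an off-the-shelf Kolmogorov application. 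Second, the phrase ``$E|X_0|^{\alpha}$-type control'' is a slip: by Corollary~\ref{cor_moment} that moment is infinite for $0<\alpha<2$. What is finite, and what controls the total masses, is the scale parameter, i.e.\ $\|f_0\|_\alpha^\alpha=\Gamma_{t}(S_1)$ up to a constant; with that correction the resulting $\mu$ can even be taken finite, so $\sigma$-finiteness is not an issue.
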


\begin{proof} See \cite{bretagnolle:dacunha-castelle:krivine:1966}, \cite{schreiber:1972}, \cite{schilder:1970}. See also \cite{kuelbs:1973} and \cite{hardin:1982b} for a discussion of history of \eqref{defn_int_repn}.
\end{proof}

For any integral representation $\{f_t\}_{t \in \mathbb{Z}^d} \subseteq L^\alpha(S, \mathcal{S}, \mu)$ , one can assume without loss of generality that
\[
\bigcup_{t \in \Zd} \mbox{Support}(f_t) \eqas S.
\]
From now on, we shall assume that this full support condition holds for all of our integral representations.

The converse of Theorem~\ref{thm_existence_int_repn} holds, i.e., given any $\sigma$-finite measure space $(S, \mathcal{S}, \mu)$, a family of functions $\{f_t\}_{t \in \mathbb{Z}^d} \subseteq L^\alpha(S, \mathcal{S}, \mu)$ and an S$\alpha$S random measure $M$ on $S$ with control measure $\mu$, we can construct an S$\alpha$S random field $\XZd$ using \eqref{defn_int_repn}. This follows trivially from Remark~\ref{remark_I_on_F_0} with $F_0=\{f_t: t \in \mathbb{Z}^d\}\subseteq L^\alpha(S, \mathcal{S}, \mu)$. Using this, one can construct many S$\alpha$S random fields, one of which is discussed below.

\begin{example}[Stationary S$\alpha$S Moving Average Random Field]\label{example_mixed_mov_avg}  \textnormal{This example was introduced (in the $d=1$ case) in \cite{surgailis:rosinski:mandrekar:cambanis:1993}. Let $(W, \mathcal{W}, \nu)$ be a $\sigma$-finite measure space. Define $S=W \times \Zd$ and $\mu=\nu \otimes \eta$, where $\eta$ is the counting measure on $\Zd$. Let $M$ be an S$\alpha$S random measure on $W \times \Zd$ with control measure $\nu \otimes \eta$. Take a single function $f \in L^\alpha(W \times \Zd, \nu \otimes \eta)$ and define a family $\{f_t\}_{t \in \mathbb{Z}^d}$ of functions as}
\[
f_t(w, s)=f(w, s+t), \; \;(w,s) \in W \times \Zd.
\]
\textnormal{It is easy to check that each $f_t \in L^\alpha(W \times \Zd, \nu \otimes \eta)$. The S$\alpha$S random field}
\begin{eqnarray}
\XZd&:\eqlaw& \left\{\int_{W \times \Zd} f_t(w,s) \,dM(w,s)\right\}_{t \in \Zd} \nonumber\\
          &\eqlaw& \left\{\int_{W \times \Zd} f(w,s+t) \,dM(w,s)\right\}_{t \in \Zd} \label{defn_mixed_mov_avg}
\end{eqnarray}
\textnormal{is called a stationary S$\alpha$S moving average random field.}
\end{example}
\begin{defn} \label{defn_stationary}A random field $\XZd$ is called stationary if $\XZd$ $\eqlaw \{X_{t+\tau}\}_{t \in \Zd}$ for all $\tau \in \Zd$.
\end{defn}

It is easy to verify that $\XZd$ defined by \eqref{defn_mixed_mov_avg} is stationary. If $W$ is a singleton, then $\XZd$ is a moving average random field with i.i.d. S$\alpha$S innovations. In view of this observation, one can think of $\XZd$ defined by \eqref{defn_mixed_mov_avg} as a mixture of moving averages and hence it is called a mixed moving average, which will play a very important role in this survey.

The following notion (introduced in \cite{hardin:1982b}) is extremely technical and yet useful. We shall first give the definition and then state a theorem that will help us understand its meaning.

\begin{defn} An integral representation $\{f_t\}_{t \in \mathbb{Z}^d} \subseteq L^\alpha(S, \mathcal{S}, \mu)$ of an S$\alpha$S random field is called a minimal representation if for all $B \in \mathcal{S}$, there exists $A \in \sigma\big\{f_t/f_{t^\prime}: t, t^\prime \in \Zd\big\}$ such that $\mu(A \Delta B)=0$.
\end{defn}

The ratio $f_t(s)/f_{t^\prime}(s)$ is defined to be $\infty$ when $f_t(s) \geq 0$, $f_{t^\prime}(s)=0$ and $-\infty$ when $f_t(s)<0$, $f_{t^\prime}(s)=0$. In particular, the $\sigma$-algebra $\sigma\big\{f_t/f_{t^\prime}: t, t^\prime \in \Zd\big\}$ is generated by a bunch of extended real-valued functions.

\begin{thm} \label{thm_existence_minimal_repn} Every S$\alpha$S random field has a minimal representation.
\end{thm}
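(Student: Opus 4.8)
The plan is to show that \emph{any} integral representation, which exists by Theorem~\ref{thm_existence_int_repn}, can be converted into a minimal one in two moves: first renormalize the functions so that they become measurable with respect to the ratio $\sigma$-algebra $\rho:=\sigma\{f_t/f_{t'}: t,t'\in\Zd\}$, absorbing the discarded magnitude information into a new control measure; then push the entire picture forward onto the range of the ratio map, where $\rho$ literally becomes the full Borel $\sigma$-algebra and minimality is automatic.

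First I would fix an arbitrary representation $\{f_t\}_{t\in\Zd}\subseteq L^\alpha(S,\mathcal{S},\mu)$ satisfying the full-support condition $\bigcup_t\mathrm{supp}(f_t)\eqas S$. Choose constants $c_u>0$ small enough that $g:=\sum_u c_u|f_u|\in L^\alpha(\mu)$ (for $\alpha\geq1$ use the triangle inequality, for $\alpha<1$ use subadditivity of $\|\cdot\|_\alpha^\alpha$); then $0<g<\infty$ a.e.\ by full support. Set $h_t:=f_t/g$. The key point is that each $h_t$ is $\rho$-measurable: using the stated conventions, $\{f_{t'}=0\}=\{f_{t'}/f_{t'}=\infty\}\in\rho$, so the sets $\{f_{t'}\neq0\}$ lie in $\rho$ and cover $S$, and on each of them $h_t=(f_t/f_{t'})/\sum_u c_u|f_u/f_{t'}|$ is a function of the ratios.

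Next I would transfer the magnitude into the measure. Since $f_t=g\,h_t$, for every finite linear combination
\[
\int_S\Big|\sum_j \theta_j f_{t_j}\Big|^\alpha d\mu=\int_S\Big|\sum_j \theta_j h_{t_j}\Big|^\alpha g^\alpha\,d\mu=\int_S\Big|\sum_j \theta_j h_{t_j}\Big|^\alpha d\nu,\qquad \nu:=g^\alpha\mu .
\]
By \eqref{meaning_of_int_repn} this shows $\{h_t\}$ on $(S,\mathcal{S},\nu)$ represents the same field, with every $h_t$ now $\rho$-measurable and with $h_t/h_{t'}=f_t/f_{t'}$, so its ratio $\sigma$-algebra is still $\rho$. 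Finally I would realize $\rho$ geometrically: enumerate the countably many pairs and define the Borel map $\pi:S\to\overline{\mathbb{R}}^{\Zd\times\Zd}$ by $\pi(s)=(f_t(s)/f_{t'}(s))_{t,t'}$, so that $\sigma(\pi)=\rho$. Take $S'=\overline{\mathbb{R}}^{\Zd\times\Zd}$ (Polish, hence standard Borel), $\tilde\nu=\pi_*\nu$, and factor each $\rho$-measurable $h_t$ as $h_t=\tilde h_t\circ\pi$ via Doob--Dynkin. Then $\int_{S'}|\sum_j\theta_j\tilde h_{t_j}|^\alpha d\tilde\nu=\int_S|\sum_j\theta_j h_{t_j}|^\alpha d\nu$, so $\{\tilde h_t\}$ on $(S',\mathcal{B}(S'),\tilde\nu)$ represents the field; and since the coordinate functions of $S'$ equal the ratios $\tilde h_t/\tilde h_{t'}$ ($\tilde\nu$-a.e.) and generate $\mathcal{B}(S')$, we get $\sigma\{\tilde h_t/\tilde h_{t'}\}=\mathcal{B}(S')$ mod $\tilde\nu$-null sets, which is exactly minimality.

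The hard part will not be the set-up but the two measure-theoretic verifications that make it legitimate. First, one must check that $\tilde\nu$ is $\sigma$-finite: pushforwards of $\sigma$-finite measures need not be, but here $\int|\tilde h_t|^\alpha d\tilde\nu=\|f_t\|_\alpha^\alpha<\infty$ and $S'=\bigcup_t\mathrm{supp}(\tilde h_t)$ with each $\{|\tilde h_t|>1/n\}$ of finite $\tilde\nu$-measure, so $\sigma$-finiteness follows. Second, one must read ``minimal'' carefully through the stated $\pm\infty$ conventions for the ratios, which is precisely what makes $\{f_{t'}=0\}\in\rho$ and hence what legitimizes the normalization $h_t=f_t/g$; the full-support reduction is exactly what guarantees the clean factorization $h_t=\tilde h_t\circ\pi$. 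These are the steps I expect to consume most of the technical care, everything else being bookkeeping.
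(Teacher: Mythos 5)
Your overall strategy---normalize, absorb the magnitude into the control measure, and push forward along the ratio map---is indeed the standard route to this theorem (the paper itself only cites \cite{hardin:1981, hardin:1982b}, and the construction you sketch is essentially the one in \cite{rosinski:2006}). But there is a genuine gap at the very step you call ``the key point'': $h_t=f_t/g$ is \emph{not} $\rho$-measurable in general. Your own displayed identity betrays this: on $\{f_{t'}\neq 0\}$ one has
\[
\frac{f_t/f_{t'}}{\sum_u c_u\,|f_u/f_{t'}|}\;=\;\frac{f_t}{f_{t'}}\cdot\frac{|f_{t'}|}{g}\;=\;\mathrm{sign}(f_{t'})\,\frac{f_t}{g},
\]
which differs from $h_t$ by the factor $\mathrm{sign}(f_{t'})$, and a global sign is exactly the information the ratio $\sigma$-field cannot detect: replacing every $f_t$ by $-f_t$ on a set changes no ratio. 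Concretely, take $S=\{1,2\}$ with counting measure and $f_t(1)=1$, $f_t(2)=-1$ for all $t$. Then $\rho=\{\emptyset,S\}$, the map $\pi$ identifies the two points, yet $h_t$ separates them, so the Doob--Dynkin factorization $h_t=\tilde h_t\circ\pi$ you invoke simply does not exist; this representation is visibly non-minimal and your procedure does not repair it.

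The gap is reparable, and the repair is precisely where the real work in Hardin's and Rosi\'{n}ski's arguments lies. One standard fix: fix an enumeration of $\Zd$, let $\tau(s)$ be the first index with $f_{\tau(s)}(s)\neq 0$ (defined a.e.\ by full support, with $\{\tau=t_0\}\in\rho$ because $\{f_u=0\}=\{f_u/f_u=\infty\}\in\rho$), and replace $f_t/g$ by $\mathrm{sign}\big(f_{\tau(\cdot)}\big)\,f_t/g$. On $\{\tau=t_0\}$ this equals $(f_t/f_{t_0})/\sum_u c_u|f_u/f_{t_0}|$, so it genuinely is $\rho$-measurable; since the sign factor is common to all $t$, every linear combination has unchanged absolute value and \eqref{meaning_of_int_repn} still holds with the new functions and the measure $\nu=g^\alpha\mu$; and the ratios are unchanged off the zero sets of the denominators. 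With this correction (equivalently: fold $\overline{\mathbb{R}}^{\Zd\times\Zd}$ by the global sign flip and choose a Borel section), the rest of your argument---the pushforward, the $\sigma$-finiteness of $\tilde\nu$, and the identification of the coordinate maps with the ratios $\tilde h_t/\tilde h_{t'}$ modulo $\tilde\nu$---goes through as you describe. Without it, the construction already fails on a two-point space.
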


The following result provides better insight into the notion of minimality of integral representations.

\begin{thm} \label{thm_rigidity_repn} Let $\{f^\ast_t\}_{t \in \mathbb{Z}^d} \subseteq L^\alpha(S^\ast, \mathcal{S}^\ast, \mu^\ast)$ be a minimal representation of an S$\alpha$S random field $\XZd$ and $\{f_t\}_{t \in \mathbb{Z}^d} \subseteq L^\alpha(S, \mathcal{S}, \mu)$ be any integral representation of $\XZd$. Then there exist measurable functions $\Phi: S \to S^\ast$ and $h: S \to \mathbb{R}\setminus \{0\}$ such that
\begin{equation}
\mu^\ast(A)=\int_{\Phi^{-1}(A)}|h|^\alpha d\mu, \;\,  A \in  \mathcal{S}^\ast, \label{reln_between_mu_h_muast}
\end{equation}
and for each $t \in \Zd$,
\begin{equation}
f_t(s)=h(s) f_t^\ast(\Phi(s)) \;\mbox{ for $\mu$-almost all } s \in S. \label{reln_between_f_and_fast}
\end{equation}
If further $\{f_t\}_{t \in \mathbb{Z}^d}$ is also a minimal representation, then $\Phi$ and $h$ are unique modulo $\mu$, $\Phi$ is one-to-one and onto, $\mu^\ast \circ \Phi \sim \mu$ and
\begin{equation}
|h|^\alpha=\frac{d (\mu^\ast \circ \Phi)}{d\mu}\;\;\mbox{ $\mu$-almost surely.}\label{reln_between_mu_h_muast_f_minimal}
\end{equation}
\end{thm}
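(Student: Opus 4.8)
The plan is to reduce everything to a single rigidity principle for $L^\alpha$ with $0<\alpha<2$, so I first convert the distributional hypothesis into an isometry. Applying \eqref{meaning_of_int_repn} to each of the two representations of the same field $\XZd$ and comparing the resulting characteristic functions gives
\[
\Big\|\sum_{j=1}^k \theta_j f^\ast_{t_j}\Big\|_\alpha = \Big\|\sum_{j=1}^k \theta_j f_{t_j}\Big\|_\alpha
\]
for every finite choice of $t_j \in \Zd$ and $\theta_j \in \R$. In particular $\sum_j \theta_j f^\ast_{t_j}=0$ in $L^\alpha(\mu^\ast)$ forces $\sum_j \theta_j f_{t_j}=0$ in $L^\alpha(\mu)$, so the rule $T f^\ast_t := f_t$ is well defined on $\mathrm{span}\{f^\ast_t\}$ and extends to a surjective linear isometry $T$ from $V^\ast := \overline{\mathrm{span}}\{f^\ast_t\} \subseteq L^\alpha(\mu^\ast)$ onto $V := \overline{\mathrm{span}}\{f_t\} \subseteq L^\alpha(\mu)$.

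The central step is to exploit the scarcity of isometries of $L^\alpha$ for $\alpha\neq 2$ noted in Section~\ref{sec:sas_rm_int}: a Lamperti--Hardin type structure theorem asserts that an isometry such as $T$, whose domain $V^\ast$ arises from a \emph{minimal} family, must be a weighted composition operator. Concretely, minimality of $\{f^\ast_t\}$ means $\mathcal{S}^\ast = \sigma\{f^\ast_t/f^\ast_{t'} : t,t'\in\Zd\}$ modulo $\mu^\ast$-null sets, so a point of $S^\ast$ is determined $\mu^\ast$-a.e.\ by the projective ray of the vector $(f^\ast_t(\cdot))_{t\in\Zd}$. Matching, at $\mu$-a.a.\ $s\in S$, the ratios $f_t(s)/f_{t'}(s)$ to the ratios $f^\ast_t/f^\ast_{t'}$ produces a measurable $\Phi:S\to S^\ast$, and the common scaling factor produces a measurable $h:S\to\R\setminus\{0\}$, nonvanishing thanks to the full-support convention. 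This yields \eqref{reln_between_f_and_fast}, namely $f_t = h\cdot(f^\ast_t\circ\Phi)$. I expect this to be the main obstacle: proving the structure theorem is the genuinely technical heart, and in a survey one either cites \cite{hardin:1982b} or reproduces the Lamperti argument.

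The measure relation \eqref{reln_between_mu_h_muast} then follows by change of variables. Writing $\Phi_\ast(|h|^\alpha\mu)(A):=\int_{\Phi^{-1}(A)}|h|^\alpha\,d\mu$, I get for each $t$
\[
\int_{S^\ast}|f^\ast_t|^\alpha\,d\mu^\ast = \|f_t\|_\alpha^\alpha = \int_S |h|^\alpha\,|f^\ast_t\circ\Phi|^\alpha\,d\mu = \int_{S^\ast}|f^\ast_t|^\alpha\,d\big(\Phi_\ast(|h|^\alpha\mu)\big).
\]
Running the same identity over all linear combinations $\sum_j\theta_j f^\ast_{t_j}$ and invoking minimality (so that these functions together with their ratios generate $\mathcal{S}^\ast$), a determining-class argument forces $\mu^\ast$ and $\Phi_\ast(|h|^\alpha\mu)$ to agree on all of $\mathcal{S}^\ast$, which is exactly \eqref{reln_between_mu_h_muast}.

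Finally, for the stronger conclusions when $\{f_t\}$ is \emph{also} minimal, I would run the construction in reverse to obtain $\Psi:S^\ast\to S$ and $h':S^\ast\to\R\setminus\{0\}$ with $f^\ast_t = h'\cdot(f_t\circ\Psi)$. Since both ratio maps are injective modulo null sets, $\Psi\circ\Phi$ and $\Phi\circ\Psi$ fix the respective projective rays and hence equal the identity a.e.; thus $\Phi$ is one-to-one and onto mod null sets and $h'\circ\Phi = 1/h$. Uniqueness of $(\Phi,h)$ is then immediate, as $\Phi$ is pinned down by the ratios and $h = f_t/(f^\ast_t\circ\Phi)$ wherever $f^\ast_t\circ\Phi\neq 0$. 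Because $h$ never vanishes, \eqref{reln_between_mu_h_muast} shows $\mu^\ast\circ\Phi\sim\mu$ with strictly positive density; and for a bijective $\Phi$ the identity $\int_{\Phi^{-1}(A)}|h|^\alpha\,d\mu = \mu^\ast(A)$ rearranges to $\frac{d(\mu^\ast\circ\Phi)}{d\mu}=|h|^\alpha$, establishing \eqref{reln_between_mu_h_muast_f_minimal}.
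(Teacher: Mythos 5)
Your outline is correct and follows exactly the route the paper takes: the paper's proof of Theorem~\ref{thm_rigidity_repn} is just a citation to the rigidity (dearth of isometries) of $L^\alpha$ spaces from \cite{hardin:1981, hardin:1982b}, and your reduction of the two representations to a surjective linear isometry of the closed linear spans, followed by the Lamperti--Hardin structure theorem on full (i.e.\ minimal) subspaces to obtain the weighted composition form $f_t=h\cdot(f_t^\ast\circ\Phi)$, is precisely that argument. You correctly flag the structure theorem as the genuinely technical step to be cited or reproduced, and the remaining deductions (the determining-class argument for \eqref{reln_between_mu_h_muast} and the bijectivity/uniqueness obtained by composing the two weighted composition operators in the doubly minimal case) are the standard completions of Hardin's proof.
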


\begin{proof}[Proofs of Theorems~\ref{thm_existence_minimal_repn} and \ref{thm_rigidity_repn}] These proofs use deep analysis of $L^\alpha$ spaces; see \cite{hardin:1981, hardin:1982b}. Theorem~\ref{thm_rigidity_repn}, for instance, follows from the rigidity (dearth of isometry) of $L^\alpha$ spaces, $0<\alpha<2$.
\end{proof}

Theorem~\ref{thm_rigidity_repn} provides some sort of uniqueness to integral representations of S$\alpha$S random fields and we shall capitalize on it heavily in this article. Since any integral representation can be expressed in terms of a minimal representation using \eqref{reln_between_f_and_fast}, $\{f^\ast_t\}_{t \in \Zd}$ should be regarded as a minimal element in the set of all integral representations. However it should be noted that in general, it is extremely difficult to check that a given integral representation is minimal. See \cite{rosinski:1994}, \cite{rosinski:1995} and \cite{rosinski:2006} for various useful results on minimal representations.

\normalsize

\section{The Stationary Case} \label{sec_rosinski_repn}

From now on, we shall assume that our S$\alpha$S random field $\XZd$ is stationary (see Definition~\ref{defn_stationary} above). Note that this means that for all $t_1, t_2, \ldots, t_k, \tau \in \Zd$ and for all $c_1, c_2, \ldots, c_k \in \mathbb{R}$, either $\sum_{i=1}^k c_i X_{t_i+\tau} \eqas 0$ or $\sum_{i=1}^k c_i X_{t_i+\tau}$ follows an $S\alpha S$ distribution whose scale parameter does not depend on $\tau$. The mixed moving average random field defined by \eqref{defn_mixed_mov_avg} serves as an important class of examples of such fields.

The ultimate goal of this survey is to study the asymptotic behaviour of a maxima sequence of $\XZd$ as $t$ varies in hypercubes of increasing size. More precisely, define for all $n \geq 1$,
$$
B_n =\big\{t=(t_1,t_2,\ldots,t_d) \in \Zd: \text{ each }t_i \in \{0, 1, 2, \ldots, n-1\}\big\},
$$
and
\begin{equation}
M_n:= \max_{t \in B_n} |X_t|, \; n \geq 1.
\end{equation}
We would like to answer, as much as possible, the following questions.
\begin{qn} What is the rate of growth of $M_n$ (as $n \to \infty$)?
\end{qn}
\begin{qn}  If we know the rate of growth of $M_n$, can we find its scaling limit?
\end{qn}

If $\{X_t\}_{t \in \Zd }\iid S\alpha S(\sigma)$, then by Proposition~1.11 of \cite{resnick:1987} and Property~\ref{prop_tail_behaviour} above, it follows that $M_n$ grows like $n^{d/\alpha}$ as $n \to \infty$ and $M_n/n^{d/\alpha} \weakconv a Z_\alpha$, where $a>0$ is a deterministic constant and $Z_\alpha$ is a Fr\'{e}chet type extreme value random variable with distribution function
\begin{equation}
P(Z_\alpha \leq z) =
\left\{
\begin{array}{ll}
e^{-z^{-\alpha}},& z>0,\\
0\,,                         & z \leq 0.
\end{array}
\right. \label{form_of_cdf_of_Zalpha}
\end{equation}
As long as the random field $\XZd$ has short memory, it is expected to exhibit the same rate of growth of $M_n$. On the other hand, if $\XZd$ has long memory, then $M_n$ is expected to grow slowly because this strong dependence will prevent erratic changes in the value of $X_t$ even when $\|t\|_\infty:=\max_{1 \leq i \leq n} |t_i|$ becomes large. We shall indeed observe a phase transition in the rate of growth of $M_n$ as $n \to \infty$. Because of the intuitions given above, this phase transition can be regarded as a passage from shorter memory to longer memory; see \cite{samorodnitsky:2004a} and \cite{roy:samorodnitsky:2008}.

In order to study the rate of growth of $M_n$, we need to know more about the integral representation of stationary S$\alpha$S random fields. It so happens that in the stationary case, any minimal representation of $\XZd$ has a very nice form in terms of a \emph{nonsingular $\Zd$-action} and an \emph{associated cocycle}. We introduce these terminologies below. See \cite{varadarajan:1970}, \cite{zimmer:1984}, \cite{krengel:1985} and \cite{aaronson:1997} for detailed discussions of these ergodic theoretic notions.

\begin{defn} Let $(S, \mathcal{S}, \mu)$ be a $\sigma$-finite standard Borel space. Then a family of measurable maps $\{\phi_t:S \to S\}_{t \in \Zd}$ is called a \underline{nonsingular} (also known as \underline{quasi-invariant}) $\Zd$-action if
\begin{enumerate}
\item $\phi_0(s)=s$ for $\mu$-almost all $s \in S$,
\item $\phi_{t_1+t_2} = \phi_{t_1} \circ \phi_{t_2}\;\mu$-almost surely,
\item $\mu \circ \phi_t^{-1} \sim \mu$  for all $t \in \Zd$.
\end{enumerate}
\end{defn}
\noindent In particular, if $\mu \circ \phi_t^{-1} = \mu$, then $\{\phi_t:S \to S\}_{t \in \Zd}$ is called a \emph{measure-preserving} $\Zd$-action. Clearly measure-preserving actions are nonsingular but the converse is not true. See \cite{aaronson:1997} for an example of a nonsingular $\mathbb{Z}$-action that is not measure-preserving.

\begin{example}\label{example_action_dissipative_Krengel} \textnormal{Let $(W, \mathcal{W}, \nu)$ be a $\sigma$-finite measure space, $S:=W \times \Zd$, $\mu:=\nu \otimes \eta$, where $\eta$ is the counting measure on $\Zd$. Define a $\mathbb{Z}^d$-action $\{\psi_t\}_{t \in \Zd}$ on $W \times \Zd$ as follows. For all $t \in \Zd$,}
\begin{equation}
\psi_t(w,s)=(w, s+t),\;\; (w,s) \in W \times \Zd.\label{defn_action_dissipative_Krengel}
\end{equation}
\textnormal{Clearly $\{\psi_t\}_{t \in \Zd}$ is a measure-preserving (and hence nonsingular) $\Zd$-action on $W \times \Zd$. Note that using this action, we can rewrite \eqref{defn_mixed_mov_avg} as
\begin{equation}
\XZd \eqlaw \left\{\int_{W \times \Zd} f(\psi_t(w,s)) \,dM(w,s)\right\}_{t \in \Zd}, \label{rewrite_mixed_mov_avg_psi_t}
\end{equation}
where $M$ is an S$\alpha$S random measure on $W \times \Zd$ with control measure $\nu \otimes \eta$.}
\end{example}

We need another notion that arises from cohomology theory and is widely used in ergodic theory.

\begin{defn} A collection of measurable maps $\big\{c_t: S \to \{-1, +1\}\big\}_{t \in \Zd}$ is called a ($\pm 1$-valued) cocycle for a nonsingular $\Zd$-action $\phit$ on $(S, \mathcal{S}, \mu)$ if for all $t_1, t_2 \in\Zd$,
\begin{equation}
c_{t_1 + t_2}(s) = c_{t_2}(s) c_{t_1}(\phi_{t_2}(s)) \label{defn_cocyle}
\end{equation}
for $\mu$-almost all $s \in S$.
\end{defn}

It was shown by Rosi\'{n}ski (see \cite{rosinski:1994}, \cite{rosinski:1995} and \cite{rosinski:2000}) that any minimal representation of a stationary S$\alpha$S random field can be written in terms of a nonsingular $\Zd$-action and an associated cocycle (see also the work \cite{hardin:1982b} that had expressed such a representation, in the $d=1$ case, using a group of linear isometries of $L^\alpha(S, \mathcal{S}, \mu)$ to itself). The seminal result of Rosi\'{n}ski is given below and should be considered as the key theorem of this paper.

\begin{thm} \label{thm_main_rosinski} Let $\{f_t\}_{t \in \Zd} \subseteq L^\alpha(S, \mathcal{S}, \mu)$ be a minimal representation of a stationary S$\alpha$S random field $\XZd$. Then there exist unique (modulo $\mu$) nonsingular $\Zd$-action $\phit$ on $(S, \mathcal{S}, \mu)$ and a $\pm 1$-valued cocycle $\{c_t\}_{t \in \Zd}$ for $\phit$ such that for all $t \in \Zd$,
\begin{equation}
f_t(s)=c_t(s) \big(f_0 \circ \phi_t(s)\big) \left(\frac{d (\mu \circ \phi_t)}{d \mu}(s)\right)^{1/\alpha} \;\mbox{ $\mu$-almost surely}. \label{repn_rosinski}
\end{equation}
\end{thm}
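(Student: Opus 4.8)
The plan is to let stationarity manufacture, for every shift $\tau \in \Zd$, a \emph{second} minimal representation of the same field, and then to feed the two minimal representations into the rigidity theorem (Theorem~\ref{thm_rigidity_repn}). The maps $\Phi,h$ it produces will be the action and (after extracting a sign) the cocycle, and every structural property demanded of them will be forced by the uniqueness clause of that theorem. First I would fix $\tau$ and look at the shifted family $\{f_{t+\tau}\}_{t \in \Zd}$. Since $\{X_{t+\tau}\}_{t\in\Zd} \eqlaw \{X_t\}_{t\in\Zd}$ by stationarity, and $\{f_{t+\tau}\}$ represents $\{X_{t+\tau}\}$, the shifted family is again an integral representation of $\XZd$. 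It is moreover minimal: as $t,t'$ range over $\Zd$ so do $t+\tau,t'+\tau$, hence $\sigma\{f_{t+\tau}/f_{t'+\tau}\}=\sigma\{f_t/f_{t'}\}$, so the defining minimality condition for $\{f_{t+\tau}\}$ is literally the one already satisfied by $\{f_t\}$.

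Next I would apply Theorem~\ref{thm_rigidity_repn} with $\{f_t\}$ in the role of the minimal representation and the (also minimal) $\{f_{t+\tau}\}$ in the role of the second representation, both living on $(S,\mathcal{S},\mu)$. This produces a map $\phi_\tau\colon S\to S$ that is one-to-one, onto and unique modulo $\mu$, together with $h_\tau\colon S\to\mathbb{R}\setminus\{0\}$ unique modulo $\mu$, such that
\[
f_{t+\tau}(s)=h_\tau(s)\,f_t\big(\phi_\tau(s)\big)\quad\mu\text{-a.e., for every }t\in\Zd,
\]
with $\mu\circ\phi_\tau\sim\mu$ and $|h_\tau|^\alpha=d(\mu\circ\phi_\tau)/d\mu$ by \eqref{reln_between_mu_h_muast_f_minimal}. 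Putting $t=0$ and factoring $h_\tau=c_\tau\,|h_\tau|$ with $c_\tau(s):=h_\tau(s)/|h_\tau(s)|\in\{-1,+1\}$ yields precisely the asserted identity \eqref{repn_rosinski} at the index $\tau$, since $|h_\tau|=\big(d(\mu\circ\phi_\tau)/d\mu\big)^{1/\alpha}$.

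It remains to check that $\phit$ is a nonsingular action and $\{c_t\}$ a cocycle, and this bookkeeping is where I expect the only genuine care to be needed. Taking $\tau=0$ forces $\phi_0=\mathrm{id}$ and $h_0=1$ modulo $\mu$ by uniqueness. For the composition law I would apply the displayed identity twice: the $\tau_1$-relation at $t+\tau_2$, then the $\tau_2$-relation at the point $\phi_{\tau_1}(s)$, giving
\[
f_{t+\tau_1+\tau_2}(s)=h_{\tau_1}(s)\,h_{\tau_2}\big(\phi_{\tau_1}(s)\big)\,f_t\big(\phi_{\tau_2}\circ\phi_{\tau_1}(s)\big).
\]
Comparing with the direct $(\tau_1+\tau_2)$-relation and invoking uniqueness in Theorem~\ref{thm_rigidity_repn} yields $\phi_{\tau_1+\tau_2}=\phi_{\tau_2}\circ\phi_{\tau_1}$ and $h_{\tau_1+\tau_2}=h_{\tau_1}\cdot(h_{\tau_2}\circ\phi_{\tau_1})$ modulo $\mu$; because $\Zd$ is abelian the composition order is immaterial, so this is the action property, and extracting signs gives the cocycle relation \eqref{defn_cocyle} after relabeling $\tau_1\leftrightarrow\tau_2$. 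Nonsingularity $\mu\circ\phi_\tau^{-1}\sim\mu$ then follows from $\mu\circ\phi_\tau\sim\mu$ and the invertibility $\phi_\tau^{-1}=\phi_{-\tau}$. The main obstacle is purely this reconciliation of the a.e.\ composition identities with the paper's conventions; it is harmless precisely because $\Zd$ is countable (so the countably many $\mu$-null exceptional sets can be unioned) and abelian, and because the uniqueness in Theorem~\ref{thm_rigidity_repn} pins down $\phi_\tau$ and $c_\tau$, which is exactly what delivers the uniqueness of the pair $(\phit,\{c_t\})$ modulo $\mu$ asserted in the statement.
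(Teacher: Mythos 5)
Your proposal is correct and follows essentially the same route as the paper's own sketch: use stationarity to exhibit the shifted family $\{f_{t+\tau}\}_{t\in\Zd}$ as another minimal representation, invoke the rigidity theorem (Theorem~\ref{thm_rigidity_repn}) to extract $\phi_\tau$ and $h_\tau$, set $c_\tau = h_\tau/|h_\tau|$, and derive the action and cocycle identities by evaluating $f_{t+\tau_1+\tau_2}$ two ways and appealing to uniqueness. Nothing essential is missing.
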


\begin{proof} The proof of this theorem is outlined in Section~\ref{subsec:proof_main_rosinski} below.
\end{proof}

The next theorem is the converse of Theorem~\ref{thm_main_rosinski} and can be used to produce many examples of stationary S$\alpha$S random fields.

\begin{thm} \label{thm_converse_main_rosinski} Take any measurable space $(S, \mathcal{S}, \mu)$, any $f \in L^\alpha(S, \mathcal{S}, \mu)$ any nonsingular $\Zd$-action $\phit$ on $(S, \mathcal{S}, \mu)$, and any $\pm 1$-valued cocycle $\{c_t\}_{t \in \Zd}$ for $\phit$. Then $\{f_t\}_{t \in \Zd} $ defined by \eqref{repn_rosinski} satisfies $\{f_t\}_{t \in \Zd} \subseteq L^\alpha(S, \mathcal{S}, \mu)$ and $\XZd$ defined by \eqref{defn_int_repn} (here $M$ is an S$\alpha$S random measure on $S$ with control measure $\mu$) is a stationary S$\alpha$S random field.
\end{thm}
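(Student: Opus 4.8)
The plan is to reduce everything to the defining relation \eqref{repn_rosinski} together with three elementary cocycle-type identities: the group-action property $\phi_{t+\tau} = \phi_t \circ \phi_\tau$, the cocycle identity \eqref{defn_cocyle} for $\{c_t\}_{t \in \Zd}$, and the chain rule for Radon--Nikodym derivatives. Writing $w_t := \frac{d(\mu \circ \phi_t)}{d\mu}$ (which is strictly positive $\mu$-a.e.\ since nonsingularity forces $\mu \circ \phi_t \sim \mu$), the chain rule for nonsingular maps gives the multiplicative cocycle relation
\[
w_{t+\tau}(s) = w_\tau(s)\, w_t\big(\phi_\tau(s)\big) \quad \text{for } \mu\text{-a.e. } s,
\]
while the defining property of $w_t$ yields the change-of-variables (transfer) formula
\[
\int_S g\big(\phi_t(s)\big)\, w_t(s)\, d\mu(s) = \int_S g(s)\, d\mu(s)
\]
for every nonnegative measurable $g$. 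These two facts are the engine of the argument.

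First I would check that each $f_t \in L^\alpha(S, \mathcal{S}, \mu)$. Since $|c_t| \equiv 1$, the definition \eqref{repn_rosinski} gives $\|f_t\|_\alpha^\alpha = \int_S |f_0(\phi_t(s))|^\alpha\, w_t(s)\, d\mu(s)$, and the transfer formula with $g = |f_0|^\alpha$ shows this equals $\|f_0\|_\alpha^\alpha = \|f\|_\alpha^\alpha < \infty$. Hence $\{f_t\}_{t \in \Zd} \subseteq L^\alpha(S, \mathcal{S}, \mu)$, so \eqref{defn_int_repn} does define an S$\alpha$S random field: by the converse construction recorded in Remark~\ref{remark_I_on_F_0}, any family in $L^\alpha$ produces a process whose finite-dimensional distributions are given by \eqref{meaning_of_int_repn}, so each $\sum_j \theta_j X_{t_j}$ is S$\alpha$S.

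The substantive step is stationarity. I would first establish the shift relation
\[
f_{t+\tau}(s) = c_\tau(s)\, w_\tau(s)^{1/\alpha}\, f_t\big(\phi_\tau(s)\big) \quad \text{for } \mu\text{-a.e. } s,
\]
by substituting \eqref{repn_rosinski} into both sides and matching the three factors: the product of $c$'s collapses via \eqref{defn_cocyle} to $c_{t+\tau}$, the nested action $\phi_t \circ \phi_\tau$ becomes $\phi_{t+\tau}$, and the product of Radon--Nikodym factors recombines to $w_{t+\tau}^{1/\alpha}$ by the multiplicative cocycle relation. By the characterization \eqref{meaning_of_int_repn}, stationarity amounts to the equality $\big\|\sum_j \theta_j f_{t_j+\tau}\big\|_\alpha = \big\|\sum_j \theta_j f_{t_j}\big\|_\alpha$ for all $t_1, \ldots, t_k, \tau \in \Zd$ and all $\theta_1, \ldots, \theta_k \in \R$. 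Plugging the shift relation into the left-hand side, the factor $c_\tau(s)$ leaves $|\cdot|^\alpha$ unchanged since $|c_\tau| \equiv 1$, while $w_\tau(s)^{1/\alpha}$ pulls out as $w_\tau(s)$; what remains is $\int_S w_\tau(s)\, \big|\sum_j \theta_j f_{t_j}(\phi_\tau(s))\big|^\alpha\, d\mu(s)$, and a single application of the transfer formula with $t = \tau$ and $g = \big|\sum_j \theta_j f_{t_j}\big|^\alpha$ returns exactly $\big\|\sum_j \theta_j f_{t_j}\big\|_\alpha^\alpha$. This yields $\{X_{t+\tau}\}_{t \in \Zd} \eqlaw \{X_t\}_{t \in \Zd}$.

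The main obstacle is bookkeeping rather than analysis: one must fix the convention for $\mu \circ \phi_t$ and verify the chain rule $w_{t+\tau} = w_\tau \cdot (w_t \circ \phi_\tau)$ and the transfer formula with every composition in the correct order, while noting that \eqref{defn_cocyle}, the action axioms, and the Radon--Nikodym identities hold only $\mu$-almost everywhere. Since a fresh null set appears for each pair $(t, \tau)$, I would discard the countable union of these exceptional sets over $t, \tau \in \Zd$ so that all the identities hold simultaneously off a single $\mu$-null set. Once these almost-everywhere relations are in hand, the conclusion follows by exact cancellation with no estimation required.
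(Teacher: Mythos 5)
Your proposal is correct and follows the same route the paper intends: the paper simply asserts that the result ``follows trivially from \eqref{meaning_of_int_repn}'', and your argument is exactly the computation behind that assertion --- the cocycle identity, the multiplicativity of the Radon--Nikodym derivatives, and the change-of-variables (transfer) formula combine to show that $\big\|\sum_j \theta_j f_{t_j+\tau}\big\|_\alpha = \big\|\sum_j \theta_j f_{t_j}\big\|_\alpha$. Your attention to the countably many exceptional null sets is a sensible extra precaution; no gaps.
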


\begin{proof} This result follows trivially from \eqref{meaning_of_int_repn}.
\end{proof}

\begin{defn} We introduce the phrase \underline{Rosi\'{n}ski representation} to mean any integral representation (not necessarily minimal) of the form \eqref{repn_rosinski}. In this case, we say that the stationary S$\alpha$S random field $\XZd$ is generated by the triplet $\left(f_0, \phit, \{c_t\}_{t \in \Zd}\right)$ on $(S, \mathcal{S},\mu)$.
\end{defn}

Note that the stationary mixed moving average S$\alpha$S random field defined by \eqref{defn_mixed_mov_avg} is generated by the triplet $\left(f, \{\psi_t\}_{t \in \Zd}, \{c_t \equiv 1\}_{t \in \Zd}\right)$ on $(W \times \Zd, \nu \otimes \eta)$ (here the notations are as in Example~\ref{example_action_dissipative_Krengel}). This means that \eqref{rewrite_mixed_mov_avg_psi_t} is a Rosi\'{n}ski representation with unit cocycle $c_t \equiv 1$ and unit Radon-Nikodym derivative $d((\nu \otimes \eta)\circ \psi_t)/d(\nu \otimes \eta) \equiv 1$ for all $t \in \Zd$. The unit Radon-Nikodym derivative is obtained because $\{\psi_t\}_{t \in \Zd}$  is a measure-preserving $\Zd$-action.

Any minimal representation is a Rosi\'{n}ski representation but not the converse. Also given a particular minimal representation, the underlying nonsingular $\Zd$-action and the associated cocycle are unique almost surely. However since minimal representation is not unique, Rosi\'{n}ski representation is not unique either. Because of Theorem~\ref{thm_rigidity_repn}, the underlying $\Zd$-actions (of different Rosi\'{n}ski representations) preserve many important ergodic theoretic properties. We shall introduce one such property in this article and discuss its implications for the length of memory (and rate of growth of the maxima sequence $M_n$) of a stationary S$\alpha$S random field.

\subsection{Sketch of proof of Theorem~\ref{thm_main_rosinski}} \label{subsec:proof_main_rosinski}

The idea of this proof is as follows. Stationarity means the the law of $\XZd$ is invariant under the shift action of $\Zd$ on $\mathbb{R}^{\Zd}$. This measure-preserving $\Zd$-action, when viewed at the integral representation level, naturally induces a nonsingular action on $S$ and an associated cocycle yielding \eqref{repn_rosinski}. The main steps of this proof are sketched below.

Fix $t \in \Zd$. Note that because of stationarity of $\{X_{\tau}\}_{\tau \in \Zd}$ and minimality of $\{f_{\tau}\}_{\tau \in \Zd}$, it follows that $\{f_{\tau+t}\}_{\tau \in \Zd}$ is also a minimal representation of $\{X_{\tau}\}_{\tau \in \Zd}$. Therefore by Theorem~\ref{thm_rigidity_repn}, there exist unique (modulo $\mu$) maps $\phi_t:S \to S$ (one-to-one and onto) and $h_t:S \to \mathbb{R}\setminus\{0\}$ such that for all $\tau \in \Zd$,
\begin{align}
&f_{\tau+t}=h_t\, f_\tau \circ \phi_t \mbox{\;\;\;$\mu$-almost surely, and}\label{reln_f_taut_and_f_tau} \\
&0<|h_t|=\left(\frac{d(\mu \circ \phi_t)}{d\mu}\right)^{1/\alpha}\mbox{\;$\mu$-almost surely, and}\label{form_of_h_t}
\end{align}

Define $c_t:=h_t/|h_t|$, $t \in \Zd$. Putting $\tau =0$ in \eqref{reln_f_taut_and_f_tau} and using \eqref{form_of_h_t}, we get that $\mu$-almost surely
\[
f_t = c_t\,f_0 \circ \phi_t\,\left(\frac{d(\mu \circ \phi_t)}{d\mu}\right)^{1/\alpha}, \;t \in \Zd.
\]
Fix $t_1, t_2 \in \Zd$. Evaluating $f_{\tau + t_1 + t_2}$ in two different ways and using Theorem~\ref{thm_rigidity_repn} (more precisely, the uniqueness of the maps), we can conclude that $\phit$ is a nonsingular $\Zd$-action on $(S, \mathcal{S}, \mu)$, $\{c_t\}_{t \in \Zd}$ is a $\pm 1$-valued cocycle for $\phit$, and they are both unique modulo $\mu$.

\section{Conservative and Dissipative Parts} \label{sec:cons_diss_parts}

When $\XZd$ is generated by $\left(f_0, \phit, \{c_t\}_{t \in \Zd}\right)$, this triplet can be thought of as a highly infinite-dimensional parameter that determines the dependence structure of $\XZd$ and hence has information about its length of memory. It so happens that $f_0$ and $\{c_t\}_{t \in \Zd}$ do not have too much information about the memory (this is somewhat expected because $f_0$ is just one function and $c_t$'s are just $\pm 1$-valued functions). The nonsingular $\Zd$-action $\phit$, on the other hand, has a lot of information on the length of memory. The next few definitions and results are motivated by this.

\begin{defn} Suppose $\phit$ is a nonsingular $\Zd$-action on  $(S,\mathcal{S}, \mu)$. A set $W_\ast \in  \mathcal{S}$ is called a wandering set (for $\phit$) if $\{\phi_t(W_\ast): t \in \Zd\}$ is a pairwise disjoint collection of subsets of $S$.
\end{defn}

Roughly speaking, wandering sets never come back to itself under the action. In Example~\ref{example_action_dissipative_Krengel}, take any $W_0 \subseteq W$ and any $t_0 \in \Zd$. Then $W_\ast:=W_0 \times \{t_0\}$ is a wandering set.

\normalsize

The following result (see Proposition~1.6.1 in \cite{aaronson:1997}) gives a decomposition of $S$ into two disjoint and invariant parts.

\begin{thm}[Hopf Decomposition] \label{thm_hopf_decomp} Suppose $\phit$ is a nonsingular $\Zd$-action on $(S,\mathcal{S},\mu)$. Then there exist unique (modulo $\mu$) subsets $\mathcal{C}, \mathcal{D} \in \mathcal{S}$ such that
\begin{enumerate}
\item $\mathcal{C} \cap \mathcal{D} = \emptyset$ modulo $\mu$,
\item $\mathcal{C} \cup \mathcal{D} = S$ modulo $\mu$,
\item $\mathcal{C}$ and $\mathcal{D}$ are invariant under the action $\phit$, i.e., for all $t \in \Zd$, $\phi_t(\mathcal{C}) = \mathcal{C}$ and $\phi_t(\mathcal{D}) =\mathcal{D}$ modulo $\mu$,
\item $\mathcal{C}$ has no wandering subset of positive measure, and
\item $\mathcal{D}=\bigcup_{t \in \Zd} \phi_t(W_\ast)$ modulo $\mu$ for some wandering set $W_\ast$.
\end{enumerate}
\end{thm}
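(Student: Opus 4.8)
The plan is to prove the Hopf decomposition by constructing the conservative part $\mathcal{C}$ and dissipative part $\mathcal{D}$ explicitly and then verifying properties (1)--(5). Since the action is nonsingular, fix any probability measure $\mu_0 \sim \mu$ (e.g.\ $d\mu_0 = w\,d\mu$ for a strictly positive $w \in L^1(\mu)$) and write $\omega_t := d(\mu_0 \circ \phi_t)/d\mu_0$ for the Radon--Nikodym cocycle. Define the sum function
\[
h(s) := \sum_{t \in \Zd} \mathbbm{1}_A(\phi_t(s))\,\omega_t(s)
\]
for a fixed set $A$ of positive measure, which (up to the standard one-dimensional argument) is the analogue of the Hopf sum $\sum_t \omega_t$; the key dichotomy is that this sum is either finite or infinite $\mu$-almost everywhere on each invariant piece. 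First I would set
\[
\mathcal{C} := \Big\{s \in S : \sum_{t \in \Zd} \omega_t(s) = \infty\Big\}, \qquad \mathcal{D} := \Big\{s \in S : \sum_{t \in \Zd} \omega_t(s) < \infty\Big\},
\]
so that (1) and (2) hold by construction and measurability of $\omega_t$.

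Next I would establish invariance (3). The cocycle identity $\omega_{t_1+t_2}(s) = \omega_{t_1}(s)\,\omega_{t_2}(\phi_{t_1}(s))$ shows that $\sum_t \omega_t$ transforms multiplicatively under the action, so the event $\{\sum_t \omega_t = \infty\}$ is invariant modulo $\mu$; hence $\phi_t(\mathcal{C}) = \mathcal{C}$ and $\phi_t(\mathcal{D}) = \mathcal{D}$ modulo $\mu$. To get (4), I would argue by contradiction: if $W_\ast \subseteq \mathcal{C}$ were a wandering set of positive measure, then the disjointness of $\{\phi_t(W_\ast)\}_{t \in \Zd}$ forces $\sum_{t} \mu_0(\phi_t(W_\ast)) = \mu_0\big(\bigsqcup_t \phi_t(W_\ast)\big) \le 1 < \infty$, and after integrating the cocycle against $\mathbbm{1}_{W_\ast}$ this contradicts $\sum_t \omega_t = \infty$ on $\mathcal{C}$. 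For (5), I would exhibit $\mathcal{D}$ as an orbit of a single wandering set by exhausting $\mathcal{D}$: using $\sigma$-finiteness, write $\mathcal{D}$ as a countable union of finite-measure pieces and extract, via a maximal/greedy selection (Zorn or a countable inductive construction), a wandering set $W_\ast$ whose orbit covers $\mathcal{D}$ modulo $\mu$.

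Uniqueness modulo $\mu$ would follow by showing that \emph{any} invariant decomposition satisfying (4) and (5) must coincide with the one just built: the Hopf sum criterion $\sum_t \omega_t = \infty$ is an intrinsic (action-determined, $\mu_0$-independent) invariant, because changing the reference measure $\mu_0 \sim \mu$ only multiplies each term by bounded comparable factors and does not change finiteness versus infinity of the sum. Thus $\mathcal{C}$ and $\mathcal{D}$ are forced, giving uniqueness.

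The hard part will be step (5), the construction of a single wandering set $W_\ast$ whose $\phi$-orbit recovers all of $\mathcal{D}$ modulo $\mu$. On $\mathcal{D}$ the sum $\sum_t \omega_t$ is finite, which intuitively says orbits are ``transient,'' but assembling a measurable fundamental domain for the (generally non-free, only $\mu$-a.e.\ defined) $\Zd$-action requires care: one must handle the multi-dimensional index set $\Zd$ rather than $\mathbb{Z}$, ensure measurability of the greedy selection, and control the null sets where the action axioms hold only almost surely. I would lean on the standard exhaustion argument for dissipative actions (as in Aaronson's treatment) adapted to $\Zd$, choosing at each stage a set that is wandering and maximal in measure among wandering subsets of the remaining mass, and showing the residual mass vanishes precisely because the Hopf sum is finite on $\mathcal{D}$.
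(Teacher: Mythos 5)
First, note that the paper itself offers no proof of this theorem: it simply cites Proposition~1.6.1 of \cite{aaronson:1997}, so your proposal is being measured against the standard textbook argument rather than anything in the text. Your architecture (define $\mathcal{C}$ and $\mathcal{D}$ by divergence versus convergence of the Hopf sum $\sum_{t\in\Zd}\omega_t$, check invariance via the cocycle identity, rule out wandering subsets of $\mathcal{C}$ by Tonelli) is the right one, and parts (1)--(4) are essentially complete. The genuine gap is in part (5), and it is not merely technical: you prove only the easy half of the Hopf criterion. Your Tonelli argument shows that a wandering set of positive measure cannot sit inside $\{\sum_t\omega_t=\infty\}$; what (5) needs is the converse, namely that every positive-measure subset of $\mathcal{D}=\{\sum_t\omega_t<\infty\}$ contains a wandering set of positive measure (equivalently, that a conservative invariant set has divergent Hopf sum). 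Without this, your greedy exhaustion can terminate with a positive-measure residual $R\subseteq\mathcal{D}$ containing no wandering subsets, and your assertion that ``the residual mass vanishes precisely because the Hopf sum is finite on $\mathcal{D}$'' is exactly the unproved statement. A concrete way to close the gap: for $s\in\mathcal{D}$ the summable family $t\mapsto\omega_t(s)$ (which satisfies $\omega_0(s)=1$) attains its maximum at finitely many $t$; let $\tau(s)$ be the lexicographically least maximizer and put $W_\ast:=\{s\in\mathcal{D}:\tau(s)=0\}$. Since the lexicographic order on $\Zd$ is translation invariant, the cocycle identity $\omega_r(\phi_\tau(s))=\omega_{r+\tau}(s)/\omega_\tau(s)$ shows both that $W_\ast$ is wandering and that $\bigcup_{t}\phi_t(W_\ast)=\mathcal{D}$ modulo $\mu$, which gives (5) in one stroke.

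A secondary error: your uniqueness argument rests on the claim that divergence of $\sum_t\omega_t$ is independent of the reference measure because changing $\mu_0$ ``multiplies each term by bounded comparable factors.'' That is false as stated: if $d\mu_1=g\,d\mu_0$ with $g>0$, the new cocycle is $\omega_t(s)\,g(\phi_t(s))/g(s)$, and the factor $g(\phi_t(s))/g(s)$ is not bounded uniformly in $t$ unless $g$ is. The clean uniqueness argument avoids the Hopf sum altogether: if $(\mathcal{C}_i,\mathcal{D}_i)$, $i=1,2$, both satisfy (1)--(5), then $\mathcal{D}_1\cap\mathcal{C}_2$ is invariant, and if it had positive measure it would meet some $\phi_t(W_1)$ in positive measure, producing a wandering subset of $\mathcal{C}_2$ of positive measure and contradicting (4); hence $\mathcal{D}_1\subseteq\mathcal{D}_2$ modulo $\mu$ and, by symmetry, the two decompositions coincide.
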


\begin{defn} $\mathcal{C}$ and $\mathcal{D}$ are called the conservative and dissipative parts (of $\phit$), respectively. $\phit$ is called conservative if $S=\mathcal{C}$ modulo $\mu$ and dissipative if $S=\mathcal{D}$ modulo $\mu$.
\end{defn}

Roughly speaking, conservative actions keep coming back to its starting point whereas the dissipative actions keep moving away. An example of dissipative action is given by Example~\ref{example_action_dissipative_Krengel} with $W_\ast=W \times \{\mathbf{0}\}$ being a wandering set whose translates cover $S$ (see Theorem~\ref{thm_hopf_decomp} above). On the other hand, the following remark provides many examples of conservative actions.

\begin{remark} \label{exc_example_cons_action} \textnormal{Note that any measure-preserving $\Zd$-action on a finite measure space is necessarily conservative. In particular, if $\mu$ is a probability measure on $S=\R^{\Zd}$ such that under $\mu$, the coordinate field $\{\pi_t\}_{t \in \Zd}$ (defined by $\pi_t(\mathbf{x})=\mathbf{x}(t)$, $\mathbf{x} \in \mathbb{R}^{\Zd}$) is stationary, then the shift action $\{\zeta_t\}_{t \in \Zd}$ of $\Zd$ on $\R^{\Zd}$, defined by}
\begin{equation}
(\zeta_t \mathbf{x})(s)=\mathbf{x}(s+t),\;\; \mathbf{x} \in \R^{\Zd}, s \in \Zd,
\label{form_of_shift_action_on_RZd}
\end{equation}
\textnormal{is conservative.}
\end{remark}

The following result confirms that even though Rosi\'{n}ski representation is not unique, the rigidity result Theorem~\ref{thm_rigidity_repn} is kind towards the dissipativity and conservativity of the underlying nonsingular $\Zd$-actions.

\begin{propn} \label{propn_field_gen_by_cons_diss} If a stationary S$\alpha$S random field is generated by a conservative (dissipative, resp.) $\Zd$-action in one Rosi\'{n}ski representation, then in any other Rosi\'{n}ski representation of the field, the underlying action must be conservative (dissipative, resp.).
\end{propn}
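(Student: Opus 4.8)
The plan is to reduce everything to a single minimal representation and to detect the dissipative part of the action by an orbit sum that is manifestly compatible with the rigidity theorem. The key ergodic-theoretic input is the following description of the Hopf decomposition: for any Rosi\'{n}ski representation $\{f_t\}_{t\in\Zd}\subseteq L^\alpha(S,\mathcal S,\mu)$ generated by $(f_0,\phit,\{c_t\})$, one has, modulo $\mu$,
\[
\mD=\Big\{s\in S:\ \sum_{t\in\Zd}|f_t(s)|^\alpha<\infty\Big\},\qquad \mC=S\setminus\mD .
\]
Since $|c_t|\equiv 1$ gives $|f_t(s)|^\alpha=|f_0(\phi_t(s))|^\alpha\,\tfrac{d(\mu\circ\phi_t)}{d\mu}(s)$ by \eqref{repn_rosinski}, this is exactly the assertion that the weighted orbit sums of the $L^1(\mu)$ function $|f_0|^\alpha$ converge on the dissipative part and diverge on the conservative part. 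Convergence on $\mD$ follows from its wandering structure (it is isomorphic to the product action of Example~\ref{example_action_dissipative_Krengel}, where the orbit sum is the $\ell^\alpha$ norm of a slice of $f_0$). Divergence on $\mC$ follows from Hopf's ratio ergodic theorem together with the full support condition $\bigcup_t\mathrm{Support}(f_t)\eqas S$, which guarantees that $\{f_0\neq 0\}$ meets $\mu$-a.e.\ orbit inside $\mC$. Establishing this description cleanly is the main obstacle; everything after it is bookkeeping built on Theorem~\ref{thm_rigidity_repn}.

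Next I would fix a minimal representation $\{f^\ast_t\}_{t\in\Zd}\subseteq L^\alpha(S^\ast,\mathcal S^\ast,\mu^\ast)$ of the field, which exists by Theorem~\ref{thm_existence_minimal_repn}, and let $\mC^\ast,\mD^\ast$ denote its Hopf decomposition. Given any Rosi\'{n}ski representation $\{f_t\}\subseteq L^\alpha(S,\mathcal S,\mu)$, Theorem~\ref{thm_rigidity_repn} supplies $\Phi:S\to S^\ast$ and $h:S\to\R\setminus\{0\}$ with $f_t=h\cdot(f^\ast_t\circ\Phi)$ for every $t$ ($\mu$-a.e.) and $\mu^\ast(A)=\int_{\Phi^{-1}(A)}|h|^\alpha\,d\mu$. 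The crucial point is that $h(s)$ is a single nonzero finite factor that does not depend on $t$: raising $f_t(s)=h(s)f^\ast_t(\Phi(s))$ to the power $\alpha$ and summing over the countable index set yields $\sum_{t}|f_t(s)|^\alpha=|h(s)|^\alpha\sum_{t}|f^\ast_t(\Phi(s))|^\alpha$ for $\mu$-a.e.\ $s$. Because $0<|h(s)|<\infty$, the two sums converge or diverge together, so the displayed description of the Hopf decomposition gives $\mD=\Phi^{-1}(\mD^\ast)$ and $\mC=\Phi^{-1}(\mC^\ast)$ modulo $\mu$.

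Finally I would combine these. Suppose the field is generated by a dissipative action in $\{f_t\}$, i.e.\ $\mu(\mC)=0$. Then $\mu^\ast(\mC^\ast)=\int_{\Phi^{-1}(\mC^\ast)}|h|^\alpha\,d\mu=\int_{\mC}|h|^\alpha\,d\mu=0$, so the minimal representation is itself dissipative. Applying the same correspondence to any other Rosi\'{n}ski representation $\{f'_t\}\subseteq L^\alpha(S',\mathcal S',\mu')$, with its own maps $\Phi',h'$, gives $\mu^\ast(\mC^\ast)=\int_{\Phi'^{-1}(\mC^\ast)}|h'|^\alpha\,d\mu'=0$; since $|h'|>0$ this forces $\mu'(\mC')=\mu'(\Phi'^{-1}(\mC^\ast))=0$, so the action is dissipative in $\{f'_t\}$ as well. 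The conservative case is identical with the roles of $\mC$ and $\mD$ interchanged (here one uses $\mu(\mD)=0$ to conclude $\mu^\ast(\mD^\ast)=0$ and then $\mu'(\mD')=0$). This proves the proposition, the only nonroutine ingredient being the orbit-sum description of the Hopf decomposition established in the first step.
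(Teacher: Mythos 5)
Your argument is correct and is essentially the proof given in the references that the paper cites for this proposition (Rosi\'{n}ski 1995 for $d=1$, Roy--Samorodnitsky 2008 for $d>1$): both rest on the characterization $\mD=\{s\in S:\sum_{t\in\Zd}|f_t(s)|^\alpha<\infty\}$ modulo $\mu$ and on transporting it through the rigidity maps of Theorem~\ref{thm_rigidity_repn}, exactly as you do. The one point to flag is that the divergence of the weighted orbit sum on $\mC$ is not literally Hopf's ratio ergodic theorem but the nonsingular $\Zd$-version of the Hopf--Halmos recurrence criterion (valid for $g\ge 0$ whose support meets $\mu$-a.e.\ orbit, which is where the full support assumption enters); this is the lemma established in the cited works, and you correctly identify it as the only nonroutine ingredient.
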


\begin{proof} See \cite{rosinski:1995} (for $d=1$) and \cite{roy:samorodnitsky:2008} (for $d>1$).
\end{proof}

\begin{remark} \label{remark_cons_diss_memory} \textnormal{The stationary S$\alpha$S random fields generated by conservative $\Zd$-actions tend to have longer memory compared to the ones generated by dissipative (or more generally non-conservative) actions because conservative actions keep coming back and hence introduce stronger dependence among the $X_t$'s. This heuristic reasoning can be validated by the growth of $M_n$ as $n \to\infty$.}
\end{remark}

The following result gives structure to a stationary S$\alpha$S random field generated by a dissipative $\Zd$-action.

\begin{thm} \label{thm_diss_mixed_mov_avg} A stationary S$\alpha$S random field is generated by a dissipative $\Zd$-action if and only if it is a mixed moving average defined by \eqref{defn_mixed_mov_avg}.
\end{thm}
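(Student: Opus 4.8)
The reverse (``if'') implication is essentially already contained in Example~\ref{example_action_dissipative_Krengel}: the mixed moving average \eqref{defn_mixed_mov_avg} is generated, via \eqref{rewrite_mixed_mov_avg_psi_t}, by the shift action $\{\psi_t\}_{t \in \Zd}$ of \eqref{defn_action_dissipative_Krengel} on $W \times \Zd$, and this action is dissipative with wandering set $W \times \{\mathbf{0}\}$ whose translates exhaust $S$. By Proposition~\ref{propn_field_gen_by_cons_diss}, dissipativity is a property of the field and not of the chosen Rosi\'nski representation, so nothing more is needed here.

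For the forward (``only if'') direction, suppose $\XZd$ is generated by a triplet $(f_0, \phit, \{c_t\})$ with $\phit$ dissipative. By the Hopf decomposition (Theorem~\ref{thm_hopf_decomp}) I may write $S = \bigcup_{n \in \Zd} \phi_n(W_\ast)$ modulo $\mu$ as a \emph{disjoint} union over some wandering set $W_\ast$. The plan is to exploit this disjoint orbit decomposition to build a measurable isomorphism $\Theta : W_\ast \times \Zd \to S$, $\Theta(w,n) = \phi_n(w)$, whose inverse sends $s \in \phi_n(W_\ast)$ to $(\phi_{-n}(s), n)$; since $\phi_\tau(\phi_n(w)) = \phi_{n+\tau}(w)$, under $\Theta$ the action $\phi_\tau$ is conjugated to the shift $(w,n) \mapsto (w, n+\tau)$. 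I then transport the representation to $W_\ast \times \Zd$ with $\nu := \mu|_{W_\ast}$ and absorb every remaining nuisance factor into a single generating function.

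First I would remove the cocycle. For a dissipative action every $\pm 1$-valued cocycle is a coboundary: setting $b \equiv 1$ on $W_\ast$ and $b(s) := c_n(\phi_{-n}(s))$ for $s \in \phi_n(W_\ast)$, the cocycle identity \eqref{defn_cocyle} yields $c_t(s) = b(s)\,b(\phi_t(s))$ for all $t$. Since $|b| \equiv 1$, multiplying the whole representation by $b$ leaves each $L^\alpha$-norm $\|\sum_j \theta_j f_{t_j}\|_\alpha$ unchanged and hence, by \eqref{meaning_of_int_repn}, leaves the law of $\XZd$ unchanged; moreover the new representation is again of the form \eqref{repn_rosinski} with generating function $\tilde f_0 = b f_0$ and \emph{trivial} cocycle $c_t \equiv 1$. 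Thus I may assume $c_t \equiv 1$, so that $\tilde f_t = \tilde f_0 \circ \phi_t \cdot \omega_t^{1/\alpha}$, where $\omega_t := d(\mu \circ \phi_t)/d\mu$. Next I absorb the Jacobian: the chain rule for Radon--Nikodym derivatives gives the multiplicative cocycle identity $\omega_{m+n}(w) = \omega_m(\phi_n(w))\,\omega_n(w)$, so defining $f : W_\ast \times \Zd \to \R$ by $f(w,m) := \tilde f_0(\phi_m(w))\,\omega_m(w)^{1/\alpha}$, decomposing $S = \bigcup_n \phi_n(W_\ast)$, changing variables $s = \phi_n(w)$ (which produces the factor $\omega_n(w)$ from $d\mu$), pulling that factor inside $|\cdot|^\alpha$ as $\omega_n(w)^{1/\alpha}$ and combining it with $\omega_{t_j}(\phi_n(w))^{1/\alpha}$ via the identity above, one obtains for all $\theta_1, \dots, \theta_k$ and $t_1, \dots, t_k$
\begin{equation*}
\Big\| \sum_{j} \theta_j \tilde f_{t_j} \Big\|_{L^\alpha(S,\mu)}^\alpha = \sum_{n \in \Zd} \int_{W_\ast} \Big| \sum_{j} \theta_j\, f(w, t_j + n) \Big|^\alpha d\nu(w) = \Big\| \sum_{j} \theta_j\, f(\cdot\,, \cdot + t_j) \Big\|_{L^\alpha(W_\ast \times \Zd,\, \nu \otimes \eta)}^\alpha,
\end{equation*}
which by \eqref{meaning_of_int_repn} says exactly that $\XZd$ has the same law as the mixed moving average \eqref{defn_mixed_mov_avg} built from $f$ on $(W_\ast \times \Zd, \nu \otimes \eta)$; taking $k=1$, $t_1 = 0$ also shows $\|f\|_{\nu\otimes\eta}^\alpha = \|\tilde f_0\|_{\mu}^\alpha < \infty$, so $f \in L^\alpha$.

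The main obstacle is the measure-theoretic bookkeeping of the last step. One must first realize the orbit decomposition $S = \bigcup_n \phi_n(W_\ast)$ measurably, so that the index map $s \mapsto n$ and the map $\Theta$ are genuine Borel isomorphisms modulo $\mu$ — this is precisely where the standard Borel hypothesis on $(S,\mathcal{S})$ is used — and then track the Radon--Nikodym factor through the change of variables carefully enough that it fuses, via the multiplicative cocycle identity, with the $\omega_{t_j}$ already present in \eqref{repn_rosinski} to yield exactly $\omega_{t_j+n}$. By contrast, the cocycle-removal step is routine once one notes that dissipativity forces every $\pm 1$-valued cocycle to be a coboundary.
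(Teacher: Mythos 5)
Your proof is correct and follows the same route as the paper's (outsourced) proof: the \emph{if} part via Proposition~\ref{propn_field_gen_by_cons_diss} together with the dissipativity of the shift action \eqref{defn_action_dissipative_Krengel}, and the \emph{only if} part by conjugating the action to the shift on $W_\ast \times \Zd$ and then removing the cocycle. The only difference is that you inline the two ingredients the paper cites as black boxes --- Krengel's structure theorem, which for the countable discrete group $\Zd$ reduces to exactly item (5) of the Hopf decomposition as you use it, and the coboundary trick for the cocycle --- and both expansions (including the Radon--Nikodym chain-rule bookkeeping that fuses $\omega_n$ with $\omega_{t_j}\circ\phi_n$ into $\omega_{t_j+n}$) check out.
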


\begin{proof} [Main Idea of the Proof] The \emph{if part} follows from Proposition~\ref{propn_field_gen_by_cons_diss} and the fact that the $\Zd$-action \eqref{defn_action_dissipative_Krengel} is dissipative. The \emph{only if part} uses a very deep result (known as Krengel's Structure Theorem; see \cite{krengel:1969} for $d=1$, and \cite{rosinski:2000}, \cite{roy:samorodnitsky:2008} for $d>1$) that states that any dissipative nonsingular $\Zd$ action is ``isomorphic'' (in an appropriate sense) to the $\Zd$-action \eqref{defn_action_dissipative_Krengel}. Exploiting this isomorphism, one can change the underlying action to \eqref{defn_action_dissipative_Krengel}. However to replace the cocycle by the unit cocycle, one has to work harder. This part of the proof is slightly technical. See pg 1176 - 1177 of \cite{rosinski:1995} for the detailed proof.
\end{proof}

The Hopf decomposition of the underlying nonsingular actions induces a decomposition of the stationary S$\alpha$S random field into two independent stationary components as follows. Let $\{f_t\}_{t \in \Zd} \subseteq L^\alpha(S, \mu)$ be a Rosi\'{n}ski representation of a stationary S$\alpha$S random field $\XZd$ with underlying nonsingular $\Zd$-action $\phit$. Let $S = \mathcal{C}\cup \mathcal{D}$ be the Hopf decomposition for $\phit$. Then
\begin{equation}
X_t = \int_S f_t dM =  \int_\mathcal{C} f_t dM +  \int_\mathcal{D} f_t dM =:X^\mathcal{C}_t + X^\mathcal{D}_t, \;t \in \Zd, \label{decomp_X_t_rosinski}
\end{equation}
where $\{X^\mathcal{C}_t\}_{t \in \Zd}$ and $\{X^\mathcal{D}_t\}_{t \in \Zd}$ are two independent stationary S$\alpha$S random fields, $\{X^\mathcal{D}_t\}_{t \in \Zd}$ is a mixed moving average, and $\{X^\mathcal{C}_t\}_{t \in \Zd}$ has no nontrivial mixed moving average component (since it is generated by a conservative $\Zd$-action).

\begin{thm} The decomposition \eqref{decomp_X_t_rosinski} is unique is law, i.e., the (finite-dimensional) distributions of $\{X^\mathcal{C}_t\}_{t \in \Zd}$ and $\{X^\mathcal{D}_t\}_{t \in \Zd}$ do not depend on the choice of Rosi\'{n}ski representation.
\end{thm}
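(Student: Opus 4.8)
\emph{Proof proposal.}
The plan is to show that for any two Rosi\'nski representations the characteristic functions of the pair $(\{X_t^{\mathcal{C}}\},\{X_t^{\mathcal{D}}\})$ coincide, and to do this by comparing each representation against one fixed minimal representation. Since the two component fields are independent and S$\alpha$S, their joint finite-dimensional distributions are determined, through \eqref{form_ch_fn_fd_If}, by the two scale functionals
\[
\Lambda_{\mathcal{C}} = \int_{\mathcal{C}}\Big|\sum_{j=1}^k \theta_j f_{t_j}\Big|^\alpha\,d\mu, \qquad \Lambda_{\mathcal{D}} = \int_{\mathcal{D}}\Big|\sum_{j=1}^k \theta_j f_{t_j}\Big|^\alpha\,d\mu,
\]
regarded as functions of $(t_1,\dots,t_k)$ and $(\theta_1,\dots,\theta_k)$. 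Moreover $\Lambda_{\mathcal{C}}+\Lambda_{\mathcal{D}}$ is the scale functional of $\{X_t\}$ itself, which is manifestly representation-independent, so it suffices to prove that $\Lambda_{\mathcal{D}}$ (equivalently $\Lambda_{\mathcal{C}}$) does not depend on the chosen Rosi\'nski representation.

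First I would fix a minimal representation $\{f_t^\ast\}_{t \in \Zd} \subseteq L^\alpha(S^\ast,\mu^\ast)$ of $\{X_t\}$ (it exists by Theorem~\ref{thm_existence_minimal_repn}), with underlying action $\phi_t^\ast$ and Hopf decomposition $S^\ast = \mathcal{C}^\ast \cup \mathcal{D}^\ast$ as in Theorem~\ref{thm_hopf_decomp}. Given an arbitrary Rosi\'nski representation $\{f_t\}$ on $(S,\mu)$ with action $\phi_t$ and Hopf decomposition $\mathcal{C}\cup\mathcal{D}$, Theorem~\ref{thm_rigidity_repn} produces $\Phi\colon S\to S^\ast$ and $h\colon S\to\R\setminus\{0\}$ satisfying \eqref{reln_between_f_and_fast} and \eqref{reln_between_mu_h_muast}, that is $f_t = h\,(f_t^\ast\circ\Phi)$ and $\mu^\ast(A)=\int_{\Phi^{-1}(A)}|h|^\alpha\,d\mu$. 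Since $h$ is nowhere zero, $A$ is $\mu^\ast$-null if and only if $\Phi^{-1}(A)$ is $\mu$-null, so $\Phi$ transports null sets in both directions.

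The crux is to show that $\Phi$ intertwines the two actions and hence respects the Hopf decomposition. Substituting $f_t = h(f_t^\ast\circ\Phi)$ into the Rosi\'nski identity \eqref{repn_rosinski} for $\{f_t\}$ and comparing with \eqref{repn_rosinski} for the minimal representation $\{f_t^\ast\}$, and using the minimality (full support of the ratios) of $\{f_t^\ast\}$, one obtains $\Phi\circ\phi_t = \phi_t^\ast\circ\Phi$ modulo $\mu$ for every $t\in\Zd$. Equivariance together with the null-set correspondence then forces wandering sets to correspond: if $W^\ast$ is a wandering set with $\mathcal{D}^\ast=\bigcup_{t}\phi_t^\ast(W^\ast)$, then $\Phi^{-1}(W^\ast)$ is wandering for $\phi_t$ and $\Phi^{-1}(\mathcal{D}^\ast)=\bigcup_t\phi_t(\Phi^{-1}(W^\ast))$, whereas $\Phi^{-1}(\mathcal{C}^\ast)$ can carry no wandering subset of positive measure. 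By the uniqueness in Theorem~\ref{thm_hopf_decomp} this gives $\Phi^{-1}(\mathcal{C}^\ast)=\mathcal{C}$ and $\Phi^{-1}(\mathcal{D}^\ast)=\mathcal{D}$ modulo $\mu$. I expect this equivariance/Hopf-transport step to be the main obstacle: for a non-minimal Rosi\'nski representation the action $\phi_t$ is only given through its generating triplet, and one must check that the rigidity map genuinely conjugates it to the canonical action of the minimal representation. This is the quantitative refinement of Proposition~\ref{propn_field_gen_by_cons_diss}.

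Finally, a change of variables closes the argument. The measure relation \eqref{reln_between_mu_h_muast} gives $\int_{S^\ast}G\,d\mu^\ast = \int_S (G\circ\Phi)\,|h|^\alpha\,d\mu$ for every nonnegative measurable $G$ on $S^\ast$. Taking $G = |\sum_j\theta_j f_{t_j}^\ast|^\alpha\,\mathbbm{1}_{\mathcal{D}^\ast}$ and using $f_t=h(f_t^\ast\circ\Phi)$ together with $\mathbbm{1}_{\mathcal{D}^\ast}\circ\Phi = \mathbbm{1}_{\mathcal{D}}$ modulo $\mu$ yields
\[
\int_{\mathcal{D}^\ast}\Big|\sum_{j}\theta_j f_{t_j}^\ast\Big|^\alpha\,d\mu^\ast = \int_{\mathcal{D}}\Big|\sum_{j}\theta_j f_{t_j}\Big|^\alpha\,d\mu,
\]
and likewise over $\mathcal{C}^\ast$ and $\mathcal{C}$. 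Thus $\Lambda_{\mathcal{D}}$ and $\Lambda_{\mathcal{C}}$ computed from any Rosi\'nski representation agree with those computed from the fixed minimal representation, and hence agree with one another. Feeding these into \eqref{form_ch_fn_fd_If} shows that the finite-dimensional laws of $\{X_t^{\mathcal{C}}\}$ and $\{X_t^{\mathcal{D}}\}$ are representation-independent, which is the assertion.
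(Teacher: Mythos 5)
Your architecture is the right one and is essentially that of the proof the paper defers to (Theorem~4.3 of Rosi\'nski (1995)): reduce to the two scale functionals, compare the given Rosi\'nski representation with a fixed minimal one via Theorem~\ref{thm_rigidity_repn}, show that $\Phi$ pulls the Hopf decomposition of $\{\phi_t^\ast\}$ back to that of $\{\phi_t\}$, and finish with the change of variables coming from \eqref{reln_between_mu_h_muast}. The reduction step and the final computation are correct as written.

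The step you should not wave at is the inclusion $\Phi^{-1}(\mathcal{C}^\ast)\subseteq\mathcal{C}$. Equivariance together with the two-way preservation of null sets does give the dissipative half: preimages of pairwise disjoint sets are pairwise disjoint, so $\Phi^{-1}(W^\ast)$ is wandering and $\Phi^{-1}(\mathcal{D}^\ast)\subseteq\mathcal{D}$. But your assertion that $\Phi^{-1}(\mathcal{C}^\ast)$ ``can carry no wandering subset of positive measure'' does not follow by the same token: for a non-minimal representation $\Phi$ is generally badly non-injective, a wandering set $V\subseteq\Phi^{-1}(\mathcal{C}^\ast)$ need not have the form $\Phi^{-1}(B)$, and its image upstairs need not be wandering, so recurrence on $\mathcal{C}^\ast$ cannot be imported directly. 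The clean way to close this gap --- and the route the cited proof actually takes --- is the intrinsic characterization of the dissipative part of a full-support Rosi\'nski representation, namely $\mathcal{D}=\bigl\{s\in S:\sum_{t\in\Zd}|f_t(s)|^\alpha<\infty\bigr\}$ modulo $\mu$. Since \eqref{reln_between_f_and_fast} gives $\sum_{t\in\Zd}|f_t|^\alpha=|h|^\alpha\bigl(\sum_{t\in\Zd}|f_t^\ast|^\alpha\bigr)\circ\Phi$ with $h$ nowhere zero, this identity yields $\mathcal{D}=\Phi^{-1}(\mathcal{D}^\ast)$ and $\mathcal{C}=\Phi^{-1}(\mathcal{C}^\ast)$ modulo $\mu$ in one stroke, and as a bonus it spares you the equivariance lemma altogether. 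With that substitution your argument is complete.
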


\begin{proof} See the proof of Theorem~4.3 in \cite{rosinski:1995}.
\end{proof}

Thanks to the above result, we define $\{X^\mathcal{C}_t\}_{t \in \Zd}$ and $\{X^\mathcal{D}_t\}_{t \in \Zd}$ to be the conservative and dissipative parts of $\XZd$, respectively.

\section{The Maxima Sequence} \label{sec:maxima}

In view of the discussions in the beginning of Section~\ref{sec_rosinski_repn} and Remark~\ref{remark_cons_diss_memory} above, we can expect that the maxima sequence $M_n$ grows slowly when the underlying $\Zd$-action is conservative. This is confirmed by the following result.

\begin{thm} \label{thm_M_n_cons_diss} Let $\XZd$ be a stationary S$\alpha$S random field generated by a nonsingular $\Zd$-action $\phit$ on $(S, \mathcal{S},\mu)$ with the corresponding Rosi\'{n}ski representation $\{f_t\}_{t \in \Zd}$ of the form \eqref{repn_rosinski}. Then the following results hold.
\begin{enumerate}
\item If $\phit$ is conservative, then $M_n / n^{d/\alpha} \probconv 0$, and
\item if $\phit$ is not conservative, then $M_n / n^{d/\alpha} \weakconv a_\bX Z_\alpha$,
\end{enumerate}
where $a_\bX >0$ is a constant determined by the dissipative part of $\XZd$ and $Z_\alpha$ is a Fr\'{e}chet type extreme value random variable with distribution function \eqref{form_of_cdf_of_Zalpha}.
\end{thm}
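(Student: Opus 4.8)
The plan is to route everything through the deterministic sequence
\[
b_n := \Big\| \max_{t \in B_n} |f_t| \Big\|_\alpha = \left( \int_S \max_{t \in B_n} |f_t(s)|^\alpha \, d\mu(s) \right)^{1/\alpha}, \qquad n \geq 1,
\]
and to separate two statements that can be proved independently: (i) for \emph{any} nonsingular action one has $M_n / b_n \weakconv C_\alpha^{1/\alpha} Z_\alpha$, so the scaling limit is always Fr\'{e}chet; and (ii) the growth of $b_n$ detects conservativity, namely $b_n = o(n^{d/\alpha})$ when $\phit$ is conservative, while $b_n \sim a\, n^{d/\alpha}$ for a constant $a>0$ when $\phit$ is not conservative. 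Granting both, the conservative case follows from $M_n/n^{d/\alpha} = (b_n/n^{d/\alpha})\,(M_n/b_n) \probconv 0$, and in the non-conservative case Slutsky's lemma gives $M_n/n^{d/\alpha} \weakconv C_\alpha^{1/\alpha} a\, Z_\alpha$, which is the assertion with $a_\bX = C_\alpha^{1/\alpha} a > 0$.

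For step (i) I would apply the series representation of Theorem~\ref{thm_series_repn_SalphaS} simultaneously to all coordinates in $B_n$. Writing $Q_n := b_n^{-\alpha}\,\max_{t \in B_n}|f_t|^\alpha\, d\mu$ (a probability measure on $S$) and $g_{t,n} := f_t / \max_{t' \in B_n}|f_{t'}|$, a LePage-type representation gives
\[
\{X_t\}_{t \in B_n} \eqlaw \Big\{ C_\alpha^{1/\alpha}\, b_n \sum_{j \geq 1} \epsilon_j \Gamma_j^{-1/\alpha}\, g_{t,n}(V_j) \Big\}_{t \in B_n}, \qquad V_j \iid Q_n,
\]
where $|g_{t,n}| \leq 1$ and $\max_{t \in B_n}|g_{t,n}(s)| = 1$ for every $s$. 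By the one large jump heuristic of Remark~\ref{remark_series_repn_SalphaS}, the dominant contribution to $\max_{t \in B_n}$ comes from the smallest arrival $\Gamma_1$ and attains the value $C_\alpha^{1/\alpha}\Gamma_1^{-1/\alpha}$ at the maximizing index; since $P\big(C_\alpha^{1/\alpha}\Gamma_1^{-1/\alpha} \leq x\big) = \exp(-C_\alpha x^{-\alpha})$, this is exactly $C_\alpha^{1/\alpha} Z_\alpha$. To make this rigorous I would prove weak convergence of the point process $\sum_{t \in B_n} \delta_{X_t/b_n}$ to a Poisson random measure whose intensity has tail $C_\alpha x^{-\alpha}$ (as dictated by Property~\ref{prop_tail_behaviour}), and read off the limit of $M_n/b_n$ by the continuous mapping theorem applied to the maximum functional.

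Step (ii) is where the ergodic theory of $\phit$ enters, via the Hopf decomposition $S = \mathcal{C} \cup \mathcal{D}$ of Theorem~\ref{thm_hopf_decomp}. Since $\mathcal{C}$ and $\mathcal{D}$ are $\phit$-invariant, the representation \eqref{repn_rosinski} keeps $\max_{t\in B_n}|f_t|$ supported region-wise, so $b_n^\alpha$ splits additively over $\mathcal{C}$ and $\mathcal{D}$. On $\mathcal{D}$ the field is a mixed moving average (Theorem~\ref{thm_diss_mixed_mov_avg}); identifying $\mathcal{D} \cong W \times \Zd$ with kernel $f$, an elementary counting argument---based on the fact that a site $u$ realizes $\max_{t\in B_n}|f(w,s+t)|$ for $\Theta(n^d)$ shifts $s$ only when $u$ is the global maximizer of $|f(w,\cdot)|$, and for $o(n^d)$ shifts otherwise---gives
\[
\frac{1}{n^d} \int_{\mathcal{D}} \max_{t \in B_n} |f_t|^\alpha \, d\mu \longrightarrow \int_W \max_{s \in \Zd} |f(w,s)|^\alpha \, d\nu(w) =: a^\alpha > 0.
\]
For the conservative part I would show $n^{-d} \int_{\mathcal{C}} \max_{t\in B_n}|f_t|^\alpha\, d\mu \to 0$, which simultaneously settles the conservative case (where $\mathcal{C} = S$). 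Factoring the integrand as $\big(\max_{t \in B_n}|f_t|^\alpha \big/ \sum_{t\in B_n}|f_t|^\alpha\big)\cdot\big(n^{-d}\sum_{t\in B_n}|f_t|^\alpha\big)$, conservativity forces $\sum_{t \in \Zd}|f_t(s)|^\alpha = \infty$ for $\mu$-a.e.\ $s \in \mathcal{C}$ (Hopf's criterion applied to $|f_0|^\alpha \in L^1$; see \cite{aaronson:1997}), so the first factor tends to $0$ a.e., while the $\mu$-integral of the second factor equals the constant $\|f_0\|_\alpha^\alpha$.

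The main obstacle is precisely this last estimate on $\mathcal{C}$. Because $\mu$ is merely $\sigma$-finite, the a.e.\ decay of the first factor does not transfer to the integral on its own, since mass may escape to infinity under the infinite measure. The crux is therefore to upgrade Hopf's ratio ergodic theorem to an equi-integrability statement for the normalized sums $n^{-d}\sum_{t\in B_n}|f_t|^\alpha$, which then permits passing to the limit inside the integral; this is the delicate ergodic-theoretic input, carried out for $d=1$ in \cite{samorodnitsky:2004a} and extended to general $d$ in \cite{roy:samorodnitsky:2008}, and it uses Proposition~\ref{propn_field_gen_by_cons_diss} to ensure that the resulting dichotomy is independent of the chosen Rosi\'{n}ski representation. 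A secondary difficulty is the uniformity in step (i): one must rule out that an accumulation of many moderate jumps (rather than the single largest one) drives $M_n$, for which the tail asymptotics of Property~\ref{prop_tail_behaviour} together with the dominant-jump-plus-negligible-remainder splitting of Remark~\ref{remark_series_repn_SalphaS} supply the needed control.
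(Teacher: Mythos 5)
Your overall architecture is the same as the paper's: everything is routed through the sequence $b_n$ of \eqref{defn_bn}, whose asymptotics (your step (ii)) are exactly Lemma~\ref{lemma_asymp_of_bn}, and the distributional control of $M_n/b_n$ comes from the LePage-type representation of Lemma~\ref{lemma_series_repn} plus the one-large-jump principle. Your treatment of step (ii) --- splitting $b_n^\alpha$ over the Hopf decomposition, the counting argument on the mixed moving average part giving the constant $\int_W \sup_{s}|f(w,s)|^\alpha\,d\nu(w)$, and the $\max/\mathrm{sum}$ factorization with the equi-integrability caveat on $\mathcal{C}$ --- is faithful to what is actually done in \cite{samorodnitsky:2004a} and \cite{roy:samorodnitsky:2008}.

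However, step (i) as you state it is false, and this is a genuine gap rather than a presentational one. The claim that $M_n/b_n \weakconv C_\alpha^{1/\alpha} Z_\alpha$ for \emph{every} nonsingular action --- ``the scaling limit is always Fr\'{e}chet'' --- is contradicted by the sub-Gaussian example in Section~\ref{sec:fg_ab_gp}: there $M_n/\sqrt{2d\log n} \weakconv A$ with $A$ a positive stable random variable, so under \emph{no} deterministic normalization can the limit be of Fr\'{e}chet type. The mechanism of failure is precisely the conservativity: the recurrence of $\phit$ makes the normalized kernels $g_{t,n}(V_j)$ for many indices $j$ simultaneously non-negligible across $B_n$, so the maximum is driven by an accumulation of moderate terms rather than by $\Gamma_1$ alone; correspondingly, the point process $\sum_{t\in B_n}\delta_{X_t/b_n}$ does \emph{not} converge to a Poisson random measure in the conservative case (it fails to be tight due to clustering, as noted in Section~\ref{sec:other_works}), so the continuous-mapping route you propose breaks down exactly where you need it. The repair is to downgrade step (i) to the two statements the paper actually uses: when $\phit$ is not conservative, the one-large-jump argument can be made rigorous and yields $M_n/b_n \weakconv C_\alpha^{1/\alpha} Z_\alpha$; when $\phit$ is conservative, one only establishes \emph{tightness} of $M_n/b_n$ (via the series representation and the coupling/truncation argument of \cite{samorodnitsky:2004a}), which together with $b_n/n^{d/\alpha}\to 0$ still gives $M_n/n^{d/\alpha}\probconv 0$. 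With that correction your product decomposition $M_n/n^{d/\alpha} = (b_n/n^{d/\alpha})(M_n/b_n)$ goes through in both regimes.
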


The main tool behind the proof of the above result is the deterministic sequence
\begin{equation}
b_n = \left(\int_S \max_{t \in B_n}|f_t(s)|^\alpha \mu(ds)\right)^{1/\alpha}, \label{defn_bn}
\end{equation}
where $B_n$ is as defined in Section~\ref{sec_rosinski_repn}. The first step of the proof is the computation of asymptotics of $b_n$ as $n \to \infty$ and the second step is to show that the asymptotic behaviour of the maxima sequence $M_n$ is more or less determined by that of $b_n$.

\begin{remark} \label{remark_bn_ind_of_repn} \textnormal{By Corollary 4.4.6 of \cite{samorodnitsky:taqqu:1994},}
\begin{equation}
\lim_{\lambda \to \infty} \lambda^\alpha P(M_n > \lambda) = C_\alpha b_n^\alpha, \nonumber
\end{equation}
\textnormal{where $C_\alpha$ is the stable tail constant \eqref{defn_C_alpha}. In particular, this means that the sequence $b_n$ is solely determined by the S$\alpha$S random field $\XZd$ and does not depend on the choice of integral representation $\{f_t\}_{t\in\Zd}$.}
\end{remark}

The first step of the proof of Theorem~\ref{thm_M_n_cons_diss} is given by the following lemma.

\begin{lemma} \label{lemma_asymp_of_bn} Let $\phit$ be as in Theorem~\ref{thm_M_n_cons_diss} and $b_n$ be as in \eqref{defn_bn}. Then the following asymptotic results hold.
\begin{enumerate}
\item If $\phit$ is conservative, then $b_n / n^{d/\alpha} \to 0$, and
\item if $\phit$ is not conservative, then $b_n / n^{d/\alpha} \to K_\bX$,
\end{enumerate}
where $K_\bX >0$ is a constant determined by the dissipative part of $\XZd$.
\end{lemma}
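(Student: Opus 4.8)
The starting point is to make the integrand in \eqref{defn_bn} transparent using the Rosi\'{n}ski form \eqref{repn_rosinski}. Since $|c_t|\equiv 1$, we have $|f_t(s)|^\alpha = |f_0(\phi_t s)|^\alpha\,w_t(s)$ with $w_t := d(\mu\circ\phi_t)/d\mu$, so that, writing $g_0:=|f_0|^\alpha\in L^1(\mu)$ and $P_t g:=(g\circ\phi_t)\,w_t$, one gets $b_n^\alpha=\int_S\max_{t\in B_n}P_t g_0\,d\mu$. The chain-rule identity $w_{t_1+t_2}(s)=w_{t_2}(\phi_{t_1}s)w_{t_1}(s)$ makes $\{P_t\}_{t\in\Zd}$ a positive, integral-preserving representation of $\Zd$ (i.e. $\int_S P_t g\,d\mu=\int_S g\,d\mu$). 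First I would invoke the Hopf decomposition (Theorem~\ref{thm_hopf_decomp}): since $\mathcal C$ and $\mathcal D$ are invariant, the restrictions of $\phit$ to them are conservative and dissipative, and $b_n^\alpha=\int_{\mathcal C}\max_{t\in B_n}P_tg_0\,d\mu+\int_{\mathcal D}\max_{t\in B_n}P_tg_0\,d\mu$. Thus it suffices to prove the two assertions on a purely conservative and on a purely dissipative action; the stated ``not conservative'' case then follows by adding the two contributions.

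\emph{Dissipative part.} By Theorem~\ref{thm_diss_mixed_mov_avg} the dissipative component is a mixed moving average, so (using that $b_n$ is representation–independent, Remark~\ref{remark_bn_ind_of_repn}) I may compute its $b_n$ on $S=W\times\Zd$, $\mu=\nu\otimes\eta$, $f_t(w,s)=f(w,s+t)$, giving $\int_{\mathcal D}\max_{t\in B_n}|f_t|^\alpha d\mu=\int_W\big(\sum_{s\in\Zd}\max_{t\in B_n}|f(w,s+t)|^\alpha\big)\nu(dw)$. The combinatorial core is the elementary fact that for a summable $h\ge 0$ on $\Zd$ one has $n^{-d}\sum_{s\in\Zd}\max_{t\in B_n}h(s+t)\to\sup_s h(s)$: the lower bound comes from the $n^d$ translates of $B_n$ containing a near-maximiser of $h$, and the upper bound from splitting the translates into those meeting $\{h>\eps\}$ (there are $(n+O(1))^d$ of them, each contributing at most $\sup h$) and the rest (whose total contribution is at most $\sum_{s}h(s)\mathbbm 1[h(s)\le\eps]\to 0$). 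Applying this for a.e. fixed $w$ to $h_w:=|f(w,\cdot)|^\alpha$, and dominating $n^{-d}\sum_s\max_{t\in B_n}h_w(s+t)\le\sum_s h_w(s)$ (which is $\nu$-integrable, being $\|f\|_\alpha^\alpha<\infty$), dominated convergence yields $n^{-d}\int_{\mathcal D}\max_{t\in B_n}|f_t|^\alpha d\mu\to\int_W\sup_{s\in\Zd}|f(w,s)|^\alpha\,\nu(dw)=:K_\bX^\alpha$, which is finite ($\le\|f\|_\alpha^\alpha$) and strictly positive because the dissipative part is nontrivial. This is the constant ``determined by the dissipative part''.

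\emph{Conservative part (the main obstacle).} Here I must show $\int_{\mathcal C}\max_{t\in B_n}P_tg_0\,d\mu=o(n^d)$, and this is where the ergodic theory really enters. The naive bound $\max_{t\in B_n}\le\sum_{t\in B_n}$ only gives $b_n^\alpha\le n^d\|f_0\|_\alpha^\alpha$, so the point is that conservativity forces the maximum to be genuinely negligible relative to the $\Zd$-ergodic sum. The plan is: (i) by truncating $g_0$ both from above and from below one reduces, at the cost of an error that is $\le\eps\, n^d$ uniformly in $n$ (again via $\max\le\sum$ and $\int P_t=\int$), to a kernel with $\delta\le g_0\le M$ on a set $A$ of finite $\mu$–measure; (ii) for such a kernel, conservativity of $\phit$ (Hopf's criterion) gives $\sum_{t\in\Zd}P_tg_0=\infty$ $\mu$–a.e. on the saturation of $A$, so along the orbit the values $\{P_tg_0(s):t\in B_n\}$ accumulate their mass through infinitely many returns to $A$ while remaining individually controlled; (iii) this yields $n^{-d}\max_{t\in B_n}P_tg_0\to 0$ $\mu$–almost everywhere, the analogue for a conservative system of the statement that the maximum of a stationary finite–mean array over a box of volume $n^d$ is $o(n^d)$. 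The last and most delicate step is to upgrade this almost–everywhere statement to convergence of the integrals, i.e. to establish the uniform integrability of $\{n^{-d}\max_{t\in B_n}P_tg_0\}_n$; this is precisely the technical heart of the argument and is carried out, via Hopf's (multiparameter) ratio ergodic theorem and careful handling of the Radon–Nikodym weights $w_t$, in \cite{samorodnitsky:2004a} for $d=1$ and in \cite{roy:samorodnitsky:2008} for general $d$. I expect this uniform–integrability/ratio step — rather than the dissipative computation — to be the real difficulty.
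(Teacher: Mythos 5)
Your overall architecture --- split $b_n^\alpha$ over the Hopf decomposition, compute the dissipative contribution via the mixed moving average structure (Theorem~\ref{thm_diss_mixed_mov_avg} combined with the representation-independence of $b_n$ from Remark~\ref{remark_bn_ind_of_repn}), and show the conservative contribution is $o(n^d)$ --- is exactly the route of the proofs the paper points to (Theorem~3.1 of \cite{samorodnitsky:2004a} and Proposition~4.1 of \cite{roy:samorodnitsky:2008}). Your dissipative computation is correct and complete: the combinatorial lemma $n^{-d}\sum_{s\in\Zd}\max_{t\in B_n}h(s+t)\to\sup_{s}h(s)$ for summable $h\ge 0$ is proved correctly, the domination $n^{-d}\sum_s\max_{t\in B_n}h_w(s+t)\le\sum_s h_w(s)$ justifies the passage to the limit under the $\nu$-integral, and the resulting constant $K_\bX^\alpha=\int_W\sup_{s\in\Zd}|f(w,s)|^\alpha\,\nu(dw)$ is the one appearing in the cited works, with positivity following from nontriviality of the dissipative component. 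This correctly reduces part~(2) to part~(1).

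The gap is in part~(1), and it is not only the uniform-integrability step you flag. Your inference (ii)$\Rightarrow$(iii) is not valid as stated: Hopf's criterion gives $\sum_{t\in\Zd}P_tg_0=\infty$ $\mu$-a.e.\ on the relevant part, but divergence of this series carries no pointwise upper control on $\max_{t\in B_n}P_tg_0/n^d$ --- a series can diverge while individual terms are huge, and with $n^d$ terms in the box, each controlled only through $\int_S P_tg_0\,d\mu=\int_S g_0\,d\mu$, a Markov bound gives $\mu(P_tg_0>\eps|t|_\infty^d)\le \|g_0\|_1/(\eps|t|_\infty^d)$, which is not summable over $\Zd$, so no Borel--Cantelli argument closes this. (For a \emph{measure-preserving} action one instead sums $\mu(g_0>\eps|t|_\infty^d)\asymp \eps^{-1}\|g_0\|_1<\infty$; it is precisely the nonsingular weights $w_t$ that break this.) Even granting the a.e.\ statement, upgrading it to convergence of integrals over a possibly infinite measure space is, as you acknowledge, the actual content of Proposition~4.1 of \cite{roy:samorodnitsky:2008}. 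So your proposal genuinely proves the dissipative half and the reduction, but for the conservative half it supplies a heuristic whose key implication does not hold as written and ultimately defers to the same reference the paper itself cites.
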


\begin{proof} For the first part, see the proof of Proposition 4.1 in \cite{roy:samorodnitsky:2008}. For the second part, see the proof in the one-dimensional case, i.e., Theorem 3.1 of \cite{samorodnitsky:2004a} (the same proof goes through in the higher dimensional case due to Theorem~\ref{thm_diss_mixed_mov_avg} above).
\end{proof}

The second step of the proof of Theorem~\ref{thm_M_n_cons_diss} relies on the following lemma, which can be established by applying Theorem~\ref{thm_series_repn_SalphaS} on each linear combination of the random vectors.

\begin{lemma} \label{lemma_series_repn} Fix a positive integer $n$. The random vector $(X_t, t \in B_n)$ has a series representation (in law) of the form
\[
(X_t)_{t \in B_n} \eqlaw \left(b_n C_\alpha^{1/\alpha} \sum_{j=1}^\infty \epsilon_j \Gamma_j^{-1/\alpha} \frac{f_t(U^{(n)}_j)}{\max_{ v \in B_n} f_v(U^{(n)}_j)}\right)_{t \in B_n},
\]
where $b_n$ is as in \eqref{defn_bn}, $C_\alpha$ is as in \eqref{defn_C_alpha}, $\{\epsilon_i\}_{i \geq 1}$ and $\{\Gamma_i\}_{i \geq 1}$ are as in Theorem~\ref{thm_series_repn_SalphaS} above, and $\{U^{(n)}_j\}_{j \geq 1}$ is a sequence of i.i.d. $S$-valued random variables with common law
\[
P\big(U^{(n)}_1 \in A\big)= b_n^{-\alpha} \int_A \max_{t \in B_n} |f_t(s)|^\alpha \mu(ds), \;A \in \mathcal{S}.
\]
\end{lemma}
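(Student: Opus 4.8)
The plan is to verify the asserted equality in law by the Cram\'er--Wold device: for $S\alpha S$ vectors the joint characteristic function is completely determined by the scale parameters of the one-dimensional projections, so it suffices to show that every linear combination $\sum_{t \in B_n}\theta_t X_t$ has the same law as the corresponding linear combination of the right-hand side. Denote the right-hand side by $(Y_t)_{t \in B_n}$ and fix real coefficients $(\theta_t)_{t \in B_n}$. First I would confirm that each coordinate series converges almost surely: writing its summand as $\epsilon_j \Gamma_j^{-1/\alpha} W_j^{(t)}$ with $W_j^{(t)} = f_t(U_j^{(n)})/\max_{v \in B_n}|f_v(U_j^{(n)})|$, the ratio is bounded by $1$ in modulus, so $E|W_1^{(t)}|^\alpha \le 1 < \infty$ and Theorem~\ref{thm_series_repn_SalphaS} applies (the denominator is positive $U_1^{(n)}$-almost surely, since the law of $U_1^{(n)}$ charges only the set where $\max_{t}|f_t|>0$). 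Because $B_n$ is finite, I may then combine these coordinate series term by term and write $\sum_{t \in B_n}\theta_t Y_t = \sum_{j=1}^\infty \epsilon_j \Gamma_j^{-1/\alpha} W_j$ almost surely, where
\[
W_j := b_n C_\alpha^{1/\alpha}\,\frac{\sum_{t \in B_n}\theta_t f_t(U_j^{(n)})}{\max_{v \in B_n}|f_v(U_j^{(n)})|}.
\]

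Next I would identify the distribution of this single series. The three sequences $\{\epsilon_j\}$, $\{\Gamma_j\}$, $\{W_j\}$ are independent (the $W_j$ are functions of the $U_j^{(n)}$ alone, which are independent of $\{\epsilon_j\}$ and $\{\Gamma_j\}$), the $W_j$ are i.i.d., and $E|W_1|^\alpha<\infty$; hence Theorem~\ref{thm_series_repn_SalphaS} gives $\sum_{t \in B_n}\theta_t Y_t \sim S\alpha S\big((C_\alpha^{-1}E|W_1|^\alpha)^{1/\alpha}\big)$. The \emph{decisive} computation is the evaluation of $E|W_1|^\alpha$ against the prescribed law of $U_1^{(n)}$: the factor $(\max_v|f_v|)^{-\alpha}$ from the denominator cancels exactly the factor $\max_t|f_t|^\alpha$ in the density, leaving
\[
E|W_1|^\alpha = b_n^\alpha C_\alpha\, b_n^{-\alpha}\int_S \Big|\sum_{t \in B_n}\theta_t f_t(s)\Big|^\alpha \mu(ds) = C_\alpha\,\Big\|\sum_{t \in B_n}\theta_t f_t\Big\|_\alpha^\alpha,
\]
so that the scale parameter collapses to $(C_\alpha^{-1}E|W_1|^\alpha)^{1/\alpha} = \big\|\sum_t \theta_t f_t\big\|_\alpha$.

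Finally I would compare with the left-hand side. The integral representation \eqref{meaning_of_int_repn} gives $\sum_{t \in B_n}\theta_t X_t = \int_S \big(\sum_{t}\theta_t f_t\big)\,dM \sim S\alpha S\big(\|\sum_t \theta_t f_t\|_\alpha\big)$. Thus for every $(\theta_t)_{t\in B_n}$ the two linear combinations share the same one-dimensional $S\alpha S$ law; equivalently, the two joint characteristic functions coincide, both equalling $\exp(-\|\sum_t\theta_t f_t\|_\alpha^\alpha)$ at the argument $(\theta_t)$, and by the Cram\'er--Wold device $(X_t)_{t \in B_n} \eqlaw (Y_t)_{t \in B_n}$.

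I expect the main difficulty to be organizational rather than conceptual: the crux is the constant bookkeeping in the scale computation, namely arranging $b_n$ and $C_\alpha^{1/\alpha}$ so that the $\max_v|f_v|^\alpha$ in the denominator cancels the density of $U_1^{(n)}$ and the stable constant $C_\alpha$ cancels the $C_\alpha^{-1}$ produced by Theorem~\ref{thm_series_repn_SalphaS}, yielding precisely $\|\sum_t\theta_t f_t\|_\alpha$ with no residual factor of $b_n$ or $C_\alpha$. A secondary point deserving one careful line is the term-by-term passage from the finitely many coordinate series to the single series with summands $W_j$, which rests only on the finiteness of $B_n$ together with the almost sure convergence of each coordinate series established at the outset.
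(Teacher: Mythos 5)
Your proposal is correct and follows exactly the route the paper indicates: apply Theorem~\ref{thm_series_repn_SalphaS} to each linear combination $\sum_{t\in B_n}\theta_t Y_t$ and match the resulting scale parameter $\bigl(C_\alpha^{-1}E|W_1|^\alpha\bigr)^{1/\alpha}=\bigl\|\sum_t\theta_t f_t\bigr\|_\alpha$ with that of $\sum_t\theta_t X_t$ from \eqref{meaning_of_int_repn}, then conclude by Cram\'er--Wold. The constant bookkeeping (cancellation of $b_n$, of $C_\alpha$, and of $\max_v|f_v|^\alpha$ against the density of $U_1^{(n)}$) is carried out correctly, and your reading of the denominator as $\max_v|f_v|$ is the intended one.
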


\begin{proof}[Sketch of Proof of Theorem~\ref{thm_M_n_cons_diss}] When $\phit$ is conservative, using Lemma~\ref{lemma_asymp_of_bn} and Lemma~\ref{lemma_series_repn} and a nice coupling argument, it is possible to show that $M_n/n^{d/\alpha} \probconv 0$. See pg 1450 - 1452 of \cite{samorodnitsky:2004a} for the details.

On the other hand, when $\phit$ is not conservative, using Lemma~\ref{lemma_series_repn} above, we have that for any $\lambda>0$,
\begin{align*}
P\left(\frac{M_n}{b_n} > \lambda\right) &= P\left(\max_{t \in B_n} \left|C_\alpha^{1/\alpha} \sum_{j=1}^\infty \epsilon_j \Gamma_j^{-1/\alpha} \frac{f_t(U^{(n)}_j)}{\max_{ v \in B_n} f_v(U^{(n)}_j)}\right|\, > \, \lambda\right),\\
\intertext{from which by using ``one large jump'' principle (see Remark~\ref{remark_series_repn_SalphaS} above), we get}
                                                                    &\approx P\left(\max_{t \in B_n} \left|C_\alpha^{1/\alpha}\epsilon_1 \Gamma_1^{-1/\alpha} \frac{f_t(U^{(n)}_1)}{\max_{ v \in B_n} f_v(U^{(n)}_1)}\right|\, > \, \lambda\right)\\
                                                                    &=P(C_\alpha^{1/\alpha} \Gamma_1^{-1/\alpha} > \lambda) = 1 - e^{-C_\alpha \lambda^{-\alpha}}.
\end{align*}
The above heuristic calculations show that $M_n/b_n \weakconv C_\alpha^{1/\alpha} Z_\alpha$ and the second part of Theorem~\ref{thm_M_n_cons_diss} follows using Lemma~\ref{lemma_asymp_of_bn}. See pg 1454 - 1455 of \cite{samorodnitsky:2004a} to find out how to make the above ``$\approx$'' precise when $\phit$ is not conservative.
\end{proof}

\normalsize

\section{Connections to Finitely Generated Abelian Groups} \label{sec:fg_ab_gp}

As long as the underlying nonsingular action is not conservative, the exact asymptotic behaviour of $M_n$ is given in Theorem~\ref{thm_M_n_cons_diss}. Therefore, more interesting examples of S$\alpha$S random fields are the ones generated by conservative actions. We look at a few of those in this section.

\begin{example} \textnormal{Consider the conservative action in Remark~\ref{exc_example_cons_action}. Choose $\mu$ to be a probability measure on $\mathbb{R}^{\Zd}$ such that under $\mu$, the coordinate field $\{\pi_t\}_{t \in \Zd}$ forms a collection of i.i.d. random variables. In this case, define an S$\alpha$S random field $\XZd$ by}
\[
\{X_t\}_{t \in \Zd} \eqlaw \left\{\int_{\mathbb{R}^{\Zd}} \pi_0 \circ \zeta_t (\bx) dM(\bx)\right\}_{t \in \Zd},
\]
\textnormal{where $M$ is an S$\alpha$S random measure on $\mathbb{R}^{\Zd}$ with control measure $\mu$ and other notations are as in Remark~\ref{exc_example_cons_action}.}

\textnormal{If further, we assume that $\pi_0$ follows standard normal distribution under $\mu$, then it would follow that $\XZd$ is a sub-Gaussian random field, i.e., there is a collection of i.i.d. standard normal random variables $\{\xi_t\}_{t \in \Zd}$ and another independent positive stable random variable $A$ defined on the same probability space such that}
\[
\{X_t\}_{t \in \Zd} \eqlaw \{A\xi_t\}_{t \in \Zd}.
\]
\textnormal{See Proposition 3.7.1 in \cite{samorodnitsky:taqqu:1994}. Using this sub-Gaussian representation and standard extreme value theory estimates (see, for example, \cite{resnick:1987}), it follows that}
\[
\frac{M_n}{\sqrt{2d\log{n}}} \weakconv A,
\]
\textnormal{a non-extreme value limit.}

\textnormal{On the other hand, if $\pi_0$ follows Pareto distribution with parameter $\theta > \alpha$ (i.e., $\mu(\pi_0>x)=x^{-\theta},\,x \geq 1$), then it can be shown that}
\[
\frac{M_n}{n^{d/\theta}} \weakconv c_{\alpha, \theta} Z_\alpha,
\]
\textnormal{for some finite positive constant $c_{\alpha, \theta}$; see Section 5 in \cite{samorodnitsky:2004a} for the details.}
\end{example}

The above example shows that in the conservative case, the rate of growth of the partial maxima sequence can be either polynomial or slowly varying. Heuristically, one can say that stronger conservativity of the underlying group action should imply longer memory, which in turn should give rise to slower rate of growth of $M_n$. Therefore, the following question becomes pertinent in the setup of Rosi\'{n}ski representations of stationary S$\alpha$S random fields.

\begin{qn} How to quantify the ``strength of conservativity'' of the underlying nonsingular $\Zd$-action?
\end{qn}

In general the answer to the above question is not known. However, \cite{roy:samorodnitsky:2008} made further investigations on
the actual rate of growth of the partial maxima sequence $M_n$ using
the theory of finitely generated abelian groups (see, for example,
\cite{lang:2002}) together with counting of the number of lattice
points in dilates of rational polytopes (see \cite{deloera:2005}). Viewing the action as a
group of nonsingular transformations and studying the algebraic
structure of this group, one can get better ideas about the strength of conservativity of the underlying action and hence the
rate of growth of the partial maxima as well as the length of memory
of the random field. We start with the following motivating example.

\begin{example} \label{example_motivating_ab_gp} \textnormal{Let $S=\mathbb{R}$, $\mu = Leb$, $d=2$, and $\{\phi_{(i,j)}\}_{(i,j) \in \mathbb{Z}^2}$ be the measure-preserving $\mathbb{Z}^2$-action on $\mathbb{R}$ defined by
$\phi_{(i,j)}(s)=s+i-j$, $s \in \mathbb{R}$. Take any $f \in L^\alpha(\mathbb{R}, Leb)$ and define a stationary S$\alpha$S random field by}
\[
\{X_{(i,j)}\}_{(i,j) \in \mathbb{Z}^2} = \left\{\int_\mathbb{R} f (\phi_{(i,j)}(s)) M(ds) \right\}_{(i,j) \in \mathbb{Z}^2},
\]
\textnormal{where $M$ is an S$\alpha$S random measure on $\mathbb{R}$ with control measure $\mu = Leb$. Fix $k \in \mathbb{Z}$. Note that for each $(i, j) \in \mathbb{Z}^2$ situated on the line $j=i+k$, $\phi_{(i,j)}=\phi_{(0,k)}$ and therefore $X_{(i,j)} =X_{(0,k)}$ almost surely.}
\textnormal{Therefore using stationarity of $\XZd$, we have}
\[
M_n = \max_{0 \leq i,j\leq n-1} |X_{(i,j)}| \eqas \max_{1-n \leq k\leq n-1} |X_{(0,k)}| \eqlaw \max_{0 \leq k\leq 2(n-1)} |X_{(0,k)}|
\]
\textnormal{for all $n \geq 1$. Since $\{X_{(0,k)}\}_{k \in \mathbb{Z}}$ is a stationary S$\alpha$S process generated by the dissipative $\mathbb{Z}$-action $\{\phi_{(0,k)}\}_{k \in \mathbb{Z}}$, we get that there exists a constant $a>0$ such that}
\[
\frac{M_n}{n^{1/\alpha}} \weakconv a Z_\alpha.
\]
\end{example}

In the example above, we see a reduction of ``effective dimension'' of the random field. Algebraically, this boils down to quotienting $\mathbb{Z}^2$ by the diagonal $K=\{(i,j) \in \mathbb{Z}^2: i=j\}$. Note that $K$ is the kernel of the group homomorphism $(i,j) \mapsto \phi_{(i,j)}$. Reduction of dimension occurs because $\mathbb{Z}^2/K \simeq \mathbb{Z}$.

In general, if a stationary S$\alpha$S random field $\XZd$ is generated by a nonsingular $\Zd$-action $\phit$, then we need to look at the kernel $K$ of the group homomorphism $t \mapsto \phi_t$, i.e.,
$$
K:=\{t \in \Zd: \phi_t (s) = s \mbox{ for $\mu$-almost all }s \in S\}.
$$
In general, it may not happen that $\Zd/K \simeq \mathbb{Z}^p$ for some $p \leq d$. However by Structure Theorem for Finitely Generated Abelian Groups (see Theorem 8.5 in Chap.~I of \cite{lang:2002}),
\[
\Zd/K = \bar{F} \oplus \bar{N},
\]
where $\bar{F} \simeq \mathbb{Z}^p$ for some $p \leq d$ and $\bar{N}$ is a finite group. Here $\oplus$ denotes the direct sum of groups. Using the fact that $\bar{F}$ is a free abelian group, it is possible to show that $\bar{F}$ has an isomorphic copy $F$ sitting inside $\Zd$; see Section~5 of \cite{roy:samorodnitsky:2008}. Fix such an $F$. In this setup, $p$ plays the role of ``effective dimension'' and $F$ plays the role of ``effective index set'' of the random field.

In Example~\ref{example_motivating_ab_gp}, $d=2$, $K=\{(i,j) \in \mathbb{Z}^2: i=j\}$, $p=1$ and $\bar{N}$ is trivial. In this case, the ``effective index set'' can be chosen to be $F=\{(0,k): k \in \mathbb{Z}\}$ and since the restricted action $\{\phi_{(i,j)}\}_{(i,j) \in F} = \{\phi_{(0,k)}\}_{k \in \mathbb{Z}}$ is dissipative, we get $M_n/n^{1/\alpha} \weakconv a Z_\alpha$. The general result is as follows.

\begin{thm} \label{thm_M_n_ab_gp} In the above setup, assume that $1 \leq p <d$. Then the following results hold.
\begin{enumerate}
\item If $\{\phi_t\}_{t \in F}$ is conservative, then $M_n / n^{p/\alpha} \probconv 0$, and
\item if $\{\phi_t\}_{t \in F}$ is not conservative, then $M_n / n^{p/\alpha} \weakconv c_\bX Z_\alpha$,
\end{enumerate}
where $c_\bX >0$ is a constant determined by $\XZd$ and $Z_\alpha$ is a Fr\'{e}chet type extreme value random variable with distribution function \eqref{form_of_cdf_of_Zalpha}.
\end{thm}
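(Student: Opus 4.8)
The plan is to reduce the $d$-dimensional problem to the $p$-dimensional one for the effective action $\{\phi_t\}_{t\in F}$ and then run the machine of Theorem~\ref{thm_M_n_cons_diss} essentially verbatim. The entire passage from $b_n$ to $M_n$ --- the series representation of Lemma~\ref{lemma_series_repn}, the one large jump heuristic of Remark~\ref{remark_series_repn_SalphaS}, and the coupling argument --- does not see the dimension at all; it uses only the asymptotics of the deterministic sequence $b_n$ defined in \eqref{defn_bn}. So the whole theorem follows once I establish the analogue of Lemma~\ref{lemma_asymp_of_bn} with $d$ replaced by $p$: namely $b_n/n^{p/\alpha}\to 0$ when $\{\phi_t\}_{t\in F}$ is conservative, and $b_n/n^{p/\alpha}\to K_{\mathbf X}>0$ when it is not. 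Given this, conservativity yields $M_n/n^{p/\alpha}\probconv 0$ and non-conservativity yields $M_n/b_n\weakconv C_\alpha^{1/\alpha}Z_\alpha$, whence $M_n/n^{p/\alpha}\weakconv c_{\mathbf X}Z_\alpha$ with $c_{\mathbf X}=C_\alpha^{1/\alpha}K_{\mathbf X}$ and $Z_\alpha$ as in \eqref{form_of_cdf_of_Zalpha}.

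First I would reduce the integrand in \eqref{defn_bn} to a function of the group element $\Psi(t):=t+K\in\Zd/K\cong\bar F\oplus\bar N$. Because the cocycle is $\pm1$-valued it drops out in absolute value, so by \eqref{repn_rosinski} the quantity $|f_t(s)|=|f_0(\phi_t(s))|\,(d(\mu\circ\phi_t)/d\mu(s))^{1/\alpha}$ depends on $t$ only through $\phi_t$. Writing $\Psi(t)=(a,b)$ with $a\in\bar F$ and $b\in\bar N$, choosing $u_a\in F$ to represent $(a,0)$ and setting $\tau_b:=\phi_{(0,b)}$, the multiplicativity $\phi_t=\phi_{u_a}\circ\tau_b$ together with the chain rule for Radon--Nikodym derivatives gives the key identity
\[
|f_t(s)|=|f_{u_a}(\tau_b s)|\left(\frac{d(\mu\circ\tau_b)}{d\mu}(s)\right)^{1/\alpha}.
\]
Thus every $|f_t|$ is expressed through the effective functions $\{|f_u|:u\in F\}$ and the finitely many nonsingular maps $\{\tau_b:b\in\bar N\}$.

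Next I would control the index set. The composite $\Lambda:\Zd\to\bar F\cong\Zbold^p$ is a surjective homomorphism, and by Ehrhart-type counting of lattice points in dilates of rational polytopes (\cite{deloera:2005}) both $\Lambda(B_n)$ and $F\cap B_n$ are sandwiched between $\Zbold^p$-cubes of side comparable to $n$ and have cardinality of order $n^p$. Combining this with the identity above and the nonsingularity change of variables $\int h(\tau_b s)\,(d(\mu\circ\tau_b)/d\mu)(s)\,d\mu(s)=\int h\,d\mu$ (the same computation that forces $\|f_t\|_\alpha=\|f_0\|_\alpha$), I obtain the two-sided bound
\[
\widetilde b_{\lfloor c_1 n\rfloor}^{\,\alpha}\ \le\ b_n^\alpha\ \le\ |\bar N|\,\widetilde b_{\lceil c_2 n\rceil}^{\,\alpha},
\]
where $\widetilde b_m$ is the sequence \eqref{defn_bn} for the effective $\Zbold^p$-action $\{\phi_t\}_{t\in F}$ over $F$-cubes of side $m$. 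Lemma~\ref{lemma_asymp_of_bn} in dimension $p$, applied to $\{\phi_t\}_{t\in F}$, then immediately gives $b_n/n^{p/\alpha}\to 0$ in the conservative case.

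The hard part will be the non-conservative case, where the sandwich only yields $0<\liminf\le\limsup<\infty$ and I must upgrade this to genuine convergence of $b_n/n^{p/\alpha}$. For this I would pass to the Hopf decomposition $S=\mC_F\cup\mD_F$ of the $F$-action; since the $\tau_b$ commute with $\{\phi_t\}_{t\in F}$ by abelianness, they preserve $\mC_F$ and $\mD_F$, so the contribution of $\mC_F$ is $o(n^p)$ by the conservative estimate just proved, while on $\mD_F$ the effective action is a mixed moving average by Theorem~\ref{thm_diss_mixed_mov_avg} in dimension $p$. On that piece the integrand becomes $\sum_{s}\max_a|g(w,s+\lambda(a))|^\alpha$, and using the \emph{leading} Ehrhart coefficient (that the normalized count $\#\Lambda(B_n)/n^p$ converges, together with stabilization of the finite-part fibres) I would compute $\lim_n b_n^\alpha/n^p=K_{\mathbf X}^\alpha>0$ explicitly in terms of the dissipative part, as in the proof of Theorem~3.1 of \cite{samorodnitsky:2004a}. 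The delicate points are extracting the exact Ehrhart leading term rather than mere order of magnitude, verifying that each $\tau_b$ respects the Hopf decomposition, and the bookkeeping of the finite group $\bar N$ in the change of variables.
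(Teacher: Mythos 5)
Your proposal is correct and follows essentially the same route as the proof the paper points to (Section~5 of \cite{roy:samorodnitsky:2008}, described there as ``mostly algebraic with a slight touch of combinatorics''): quotient $\Zd$ by the kernel $K$, use the structure theorem to extract the effective free part $F\simeq\mathbb{Z}^p$, observe that $|f_t|$ depends on $t$ only through $t+K$, count lattice points in dilates of rational polytopes to sandwich $b_n$ between the corresponding quantities for the $F$-action, and then rerun the $b_n$-to-$M_n$ machinery of Theorem~\ref{thm_M_n_cons_diss} in dimension $p$. The delicate points you flag --- that the $\tau_b$ preserve the Hopf decomposition of the $F$-action by commutativity, the extraction of the exact leading term of the lattice-point count rather than its order of magnitude, and the bookkeeping over the finite group $\bar{N}$ --- are precisely where the cited proof spends its effort.
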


\begin{proof} This proof is mostly algebraic with a slight touch of combinatorics in it; see Section 5 of \cite{roy:samorodnitsky:2008}.
\end{proof}

\subsection{Extensions and Open Problems} The discrete parameter results mentioned in this paper have been extended to the continuous parameter stationary measurable locally bounded S$\alpha$S random fields $\{X_t\}_{t \in \mathbb{R}^d}$ in \cite{rosinski:1995, rosinski:2000}, \cite{samorodnitsky:2004b} and \cite{roy:2010b}. The approach taken by them is to approximate the continuous parameter random field $\{X_t\}_{t \in \mathbb{R}^d}$ by its discrete parameter skeletons $\{X_t\}_{t \in  2^{-i}\mathbb{Z}^d}$, $i=0,1,2, \ldots$. In \cite{chakrabarty:roy:2013}, the notion of effective dimension was extended to the continuous parameter case based on the following observation: the effective dimensions of $\{X_t\}_{t \in  2^{-i}\mathbb{Z}^d}$, $i=0,1,2, \ldots$ are all equal and therefore can be defined as the effective dimension of $\{X_t\}_{t \in \mathbb{R}^d}$. With this definition, Theorem~\ref{thm_M_n_ab_gp} was extended to the continuous parameter case.

\cite{owada:samorodnitsky:2015a} used the sophisticated machinery of pointwise dual ergodicity (see, for example, \cite{aaronson:1997}) to derive a functional limit theorem (in Skorohod's $M_1$-topology and in some cases, $J_1$-topology) for the scaled partial maxima process based on a stationary S$\alpha$S process generated by a measure-preserving conservative action. However, generalization of this work (and also \cite{owada:samorodnitsky:2015b}, \cite{jung:owada:samorodnitsky:2015}, \cite{owada:2016}) to random fields is open and requires the notion of pointwise dual ergodicty in the multiparameter case.

Recently, \cite{sarkar:roy:2016} investigated a similar maxima sequence for stationary S$\alpha$S random fields indexed by finitely generated free groups and obtained a different phase transition boundary between shorter and longer memory. In particular, they have produced an example of such a random field induced by a conservative action of the free group but its maxima sequence grows as fast as the i.i.d. field as opposed to what happens in the case of $\Zd$. A deeper connection to algebra (as in Theorem~\ref{thm_M_n_ab_gp} above) is still missing mainly because of unavailability of a general structure theorem for finitely generated noncommutative groups. It is perhaps possible to resolve this issue in special classes of actions but nothing is clear at the moment.

\section{Summary of Related Work} \label{sec:other_works}

A few important classes of stationary S$\alpha$S processes were introduced in \cite{rosinski:samorodnitsky:1996} and \cite{cohen:samorodnitsky:2006}.

Various probabilistic aspects of stationary S$\alpha$S random fields and processes have also been connected to the ergodic theoretic properties of the underlying nonsingular action. \cite{mikosch:samorodnitsky:2000a} investigated the ruin probabilities of a negatively drifted random walk whose steps are coming from a stationary ergodic stable process and observed that ruin becomes more likely when the underlying $\mathbb{Z}$-action is conservative.

The point process induced by stationary S$\alpha$S processes was considered in \cite{resnick:samorodnitsky:2004} and this work was extended to the random fields in \cite{roy:2010a}. It was seen that when the underlying action is not conservative, the associated point process sequence converges weakly to a Poisson cluster process. However in the conservative case, the point process sequence does not remain tight due to clustering. In many such examples, the point process sequence can be shown to converge to a random measure after proper normalization. In particular, the connection to finitely generated abelian groups carries forward to this setup as well.

\cite{fasen:roy:2016} investigated the large deviation behaviour of a point process sequence
induced by a stationary S$\alpha$S random field based on the framework introduced in \cite{hult:samorodnitsky:2010}. Once again, depending on the ergodic theoretic and group theoretic structures of the underlying nonsingular $\Zd$-action, different large deviation behaviours were observed. This was used to study the large deviations of maxima and partial sum sequences of such fields.

Using the language of positive-null decomposition of nonsingular flows (see Section 1.4 in \cite{aaronson:1997} and Section 3.4 in \cite{krengel:1985}) another decomposition of measurable stationary $S\alpha S$ processes was obtained in \cite{samorodnitsky:2005a} and this decomposition was used to characterize the ergodicity of such a process. This work was extended to the stationary S$\alpha$S random fields in \cite{wang:roy:stoev:2013} based on the work \cite{takahashi:1971}. See also \cite{roy:2012} for another recent work connecting Maharam systems with various ergodic properties of stationary stable processes.

A systematic and wholesome approach to decompositions of a stationary S$\alpha$S process into independent stationary S$\alpha$S components is presented in \cite{wang:stoev:roy:2012}.

Decompositions based on the ergodic theory of nonsingular actions were also obtained for self-similar $S\alpha S$ processes with stationary
increments in \cite{pipiras:taqqu:2002a} and \cite{pipiras:taqqu:2002b}. See also \cite{kolodynski:rosinski:2003} for existence and rigidity results for integral representations of group self-similar stable processes.

Many of the results mentioned in this survey have parallels in the max-stable world. See, for example, \cite{stoev:taqqu:2005}, \cite{stoev:2008}, \cite{kabluchko:2009}, \cite{kabluchko:schlather:dehaan:2009}, \cite{kabluchko:schlather:2010}, \cite{wang:stoev:2010a}, \cite{wang:stoev:2010b}, \cite{wang:stoev:roy:2012}, \cite{wang:roy:stoev:2013}, \cite{dombry:kabluchko:2014}, \cite{dombry:kabluchko:2016}. For links between stationary infinitely divisible processes and ergodic theory, see \cite{roy:2007a}, \cite{roy:2009}, \cite{owada:samorodnitsky:2015b}, \cite{jung:owada:samorodnitsky:2015}, \cite{owada:2016}, \cite{kabluchko:stoev:2016}.

\section*{Acknowledgements} The author would like to express his sincere gratitude towards Professor B.~V.~Rao, whose outstanding teaching and vivacious personality have always been a source of inspiration. He would also like to thank the organisers of \emph{Eighth Lectures on Probability and Stochastic Processes} for the kind invitation and warm hospitality, and the participants for pointing out typos and mistakes in the first draft of the lecture notes. Last but not the least, the careful reading and detailed comments by the anonymous referee are gratefully acknowledged.

\end{document}